\renewcommand{\H}{\mathbb{H}}
\newcommand{\G}{\mathbb{G}}
\newcommand{\M}{\mathbb{M}}
\newcommand{\N}{\mathbb{N}}
\newcommand{\K}{\mathbb{K}}
\newcommand{\R}{\mathbb{R}}
\newcommand{\V}{\mathbb{V}}
\newcommand{\W}{\mathbb{W}}
\newcommand{\bu}{\mbox{\bf 1}}
\newtheorem{theo}{Theorem}[section]
\newtheorem{prop}[theo]{Proposition}
\newtheorem{coro}[theo]{Corollary}
\theoremstyle{definition}
\newtheorem{defi}{Definition}
\newtheorem{rema}{Remark}
\begin{document}

\title{A reverse coarea-type inequality in Carnot groups}
\author{Francesca Corni}
\address{Universit\`a di Bologna\\ 
Dip.to di Matematica \\
Piazza di Porta San Donato, 5, 40126\\
Bologna, Italy}
\email{francesca.corni3@unibo.it}

\keywords{Carnot groups, coarea formula, spherical measure, packing measure}

\subjclass[2010]{28A75, 28A78, 22E30}

\begin{abstract}
We prove a coarea-type inequality for a continuously Pansu differentiable function acting between two Carnot groups endowed with homogeneous distances. We assume that the level sets of the function are uniformly lower Ahlfors regular and that the Pansu differential is everywhere surjective. 
\end{abstract}

\maketitle

\section{Introduction}
Geometric measure theory in non-Euclidean metric spaces has been relevantly developed during the last decades. One of the first goals in this line of research is the study of Carnot groups, that are connected, simply connected, nilpotent, stratified Lie groups. These are the simplest models of sub-Riemannian manifolds.
One can canonically associate to each Carnot group a family of non-isotropic dilations defined according to the stratification of the Lie algebra of the group.
We study Carnot groups endowed with a distance that is homogeneous with respect to these dilations.
Within the study of these metric spaces, a long-standing open problem is the validity of the coarea formula for Lipschitz maps acting between two Carnot groups. Up to now, for these mappings only a coarea-type inequality is available \cite{Magnanicoareaineq}. Some stronger results have been proved for specific situations. For instance, one can refer to \cite{Franchi96,Magnanicoarea2005, monser} for Lipschitz real-valued maps
acting on a generic Carnot group, to \cite{Artem, areaimpliescoarea, MagSteTre, montivittone} for continuously Pansu differentiable mappings from a Heisenberg group $\H^n$ to $\R^k$ (where, depending on $k$, higher regularity on the Pansu differential may be required) and to \cite{Karma, Magnaniblowup, Magnani2008} for Euclidean regular maps from a Carnot group to $\R^k$.
Moreover, a very general result has recently been proved in \cite{AntoineSebastiano}. The authors consider two Carnot groups $\G$ and $\M$, endowed with homogeneous distances, an open set $\Omega \subset \G$ and a map $f: \Omega \to \M$, with Pansu differential $Df(x)$ continuous on $\Omega$. Then, the coarea formula holds for $f$ if, at every point $x \in \Omega$, either $Df(x)$ is surjective and $\ker(Df(x))$ can be complemented with a homogeneous subgroup (Definition \ref{complementary}) or $Df(x)$ is not surjective. A key step in the proof of the coarea formula \cite[Theorem 1.3]{AntoineSebastiano} is a suitable implicit function theorem (Theorem \ref{IFT}). Fix a value $m \in \M$ and consider a point $x \in f^{-1}(m)$ such that $Df(x)$ is surjective. Assume that there exists a homogeneous subgroup $\V$ complementary to $\ker(Df(x))$ and choose any homogeneous subgroup $\W$ complementary to $\V$. Then there exist an open neighbourhood $\Omega \subset \G$ of $x$, an open set $U \subset \W$ and a map $\phi:U \subset \W \to \V$ such that $\Omega \cap f^{-1}(m)$ is the intrinsic graph of $\phi$ (Definition \ref{intgraph}). It is not clear how to prove the existence of an analogous parametrization if we assume the Pansu differential $Df(x)$ only to be surjective. In this work we bypass this lack and we prove a weaker coarea-type result, that permits, under a further regularity condition, to deal with more general situations. 
More precisely we prove the following result.
\begin{theo}\label{maintheorem2}
Let $(\G,d_1)$, $(\M, d_2)$ be two Carnot groups endowed with homogeneous distances, of metric dimension $Q, \ P$ and topological dimension $q, \ p$, respectively. Let $f \in C^1_{\G}(\G, \M)$ be a function and assume that $Df(x)$ is surjective at every point $x \in \G$. 
Assume that there exist two constants $\tilde{r}, C>0$ such that for $\mathcal{S}^P$-a.e. $m \in \M$ the level set $  f^{-1}(m) $ is $\tilde{r}$-locally 
$C$-lower Ahlfors $(Q-P)$-regular with respect to the measure $\mathcal{S}^{Q-P}$. Let $\Omega$ be a closed bounded subset of $\G$. 
Then there exists a constant $L=L(C, \G ,p )$ such that
$$\int_{\Omega} C_P(Df(x)) d \mathcal{S}^{Q}(x) \leq L \int_{\M} \mathcal{S}^{Q-P}(f^{-1}(m) \cap \Omega)  d \mathcal{S}^P(m).$$
\end{theo}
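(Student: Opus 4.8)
The plan is to reduce the inequality to a local estimate on small balls, to glue these estimates by a Vitali covering argument, and to pass from open to closed $\Omega$ by an elementary approximation. The local estimate I would aim for is: there is a constant $L_0=L_0(C,\G,p)$ such that for every $x_0\in\G$ and every $r$ below a threshold depending only on $\tilde{r}$ and on the behaviour of $Df$ near $x_0$,
\begin{equation*}
\int_{B_{\G}(x_0,r)}C_P(Df(x))\,d\mathcal{S}^{Q}(x)\ \leq\ L_0\int_{\M}\mathcal{S}^{Q-P}\bigl(f^{-1}(m)\cap B_{\G}(x_0,r)\bigr)\,d\mathcal{S}^{P}(m).
\end{equation*}
Granting this, if $\Omega$ is open and bounded I would extract, via the Vitali covering theorem for the doubling Radon measure $\mathcal{S}^{Q}$, a countable pairwise disjoint family $\{B_i\}$ of closed balls with $B_i\subseteq\Omega$, radii below the local threshold, and $\mathcal{S}^{Q}(\Omega\setminus\bigcup_iB_i)=0$; summing the local estimates over $i$ and using disjointness, monotone convergence and $\bigcup_iB_i\subseteq\Omega$ collapses the right-hand side to $L_0\int_{\M}\mathcal{S}^{Q-P}(f^{-1}(m)\cap\Omega)\,d\mathcal{S}^{P}(m)$. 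For $\Omega$ closed and bounded I would apply this to $U_n:=\{x:d_1(x,\Omega)<1/n\}\downarrow\Omega$ and let $n\to\infty$: the left-hand sides converge since $C_P(Df)\,\mathcal{S}^{Q}$ is a Radon measure finite on $U_1$, while for the right-hand sides the already-available coarea-type inequality of \cite{Magnanicoareaineq} makes $m\mapsto\mathcal{S}^{Q-P}(f^{-1}(m)\cap U_1)$ integrable, so continuity from above and dominated convergence give the conclusion, with $L=L(C,\G,p)$.

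For the local estimate I would combine two ingredients. The first is a quantitative openness property: \emph{for $r$ small, $\mathcal{S}^{P}\bigl(f(B_{\G}(x_0,r))\bigr)\geq c\,C_P(Df(x_0))\,r^{P}$}, with $c=c(\G,p)>0$ and uniformly for $x_0$ in a compact set. Since $f\in C^1_{\G}(\G,\M)$, the rescalings $G_r(y):=\delta_{1/r}\bigl(f(x_0)^{-1}f(x_0\,\delta_ry)\bigr)$ converge to $L_0:=Df(x_0)$ uniformly on $\overline{B_{\G}(0,1)}$ — uniformly for $x_0$ in a compact set — and $f(B_{\G}(x_0,r))=f(x_0)\,\delta_r\bigl(G_r(B_{\G}(0,1))\bigr)$. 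As $L_0$ is a surjective homogeneous homomorphism it is an open map, and a computation with the coarea factor of a homomorphism (from the coarea formula for homomorphisms) yields $\mathcal{S}^{P}(L_0(B_{\G}(0,1)))\gtrsim_{\G,p}C_P(L_0)$, hence $\overline{L_0(B_{\G}(0,1/2))}=\delta_{1/2}\,\overline{L_0(B_{\G}(0,1))}$ is a compact subset of $L_0(B_{\G}(0,1))$ of $\mathcal{S}^{P}$-measure $\gtrsim_{\G,p}C_P(L_0)$. To transfer this lower bound to $G_r$: $L_0$ is a submersion, so near each $w\in\overline{L_0(B_{\G}(0,1/2))}$ there is a continuous local section $\sigma_w$ of $L_0$, defined on a ball $B_{\M}(w,s)$ with $s>0$ uniform on compacts and with $\sigma_w(B_{\M}(w,s))\subseteq B_{\G}(0,1)$; then $G_r\circ\sigma_w$ is a self-map of $B_{\M}(w,s)$ converging uniformly to the identity, so by homotopy invariance of the Brouwer degree $w\in G_r(B_{\G}(0,1))$ once $r$ is small. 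Thus $\overline{L_0(B_{\G}(0,1/2))}\subseteq G_r(B_{\G}(0,1))$ for $r$ small, and rescaling by $\delta_r$ (using left-invariance and $\mathcal{S}^{P}(\delta_rE)=r^{P}\mathcal{S}^{P}(E)$ on $\M$) gives the claimed openness estimate.

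The second ingredient is the hypothesis itself: for $\mathcal{S}^{P}$-a.e.\ $m\in f(B_{\G}(x_0,r/2))$ there is $x\in f^{-1}(m)\cap B_{\G}(x_0,r/2)$ with $f^{-1}(m)$ $\tilde{r}$-locally $C$-lower Ahlfors $(Q-P)$-regular; since $B_{\G}(x,r/2)\subseteq B_{\G}(x_0,r)$ and $r/2<\tilde{r}$, this forces $\mathcal{S}^{Q-P}(f^{-1}(m)\cap B_{\G}(x_0,r))\geq C^{-1}(r/2)^{Q-P}$. Integrating over $m\in f(B_{\G}(x_0,r/2))$ and inserting the openness estimate,
\begin{equation*}
\int_{\M}\mathcal{S}^{Q-P}\bigl(f^{-1}(m)\cap B_{\G}(x_0,r)\bigr)\,d\mathcal{S}^{P}(m)\ \geq\ \frac{(r/2)^{Q-P}}{C}\,\mathcal{S}^{P}\bigl(f(B_{\G}(x_0,r/2))\bigr)\ \gtrsim_{\G,p}\ \frac{C_P(Df(x_0))}{C}\,r^{Q},
\end{equation*}
whereas by continuity of $x\mapsto C_P(Df(x))$ the left-hand side of the local estimate is $(1+o(1))\,C_P(Df(x_0))\,\mathcal{S}^{Q}(B_{\G}(0,1))\,r^{Q}$ as $r\to0^+$; dividing yields the local estimate (the $o(1)$ being cleared by running the covering argument for each $\varepsilon>0$ and letting $\varepsilon\to0$).

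The hard part is the quantitative openness estimate $\mathcal{S}^{P}(f(B_{\G}(x_0,r)))\gtrsim C_P(Df(x_0))\,r^{P}$: this is where the surjectivity of $Df$ is genuinely used, and it plays the role that the implicit function theorem and the complementability of $\ker(Df(x))$ play in the proof of the full coarea formula. Rather than parametrizing the level sets — which is precisely what cannot be done under our hypotheses — we only need $f$ to cover a set of comparable $P$-dimensional measure near each point, which we read off from the openness of surjective homogeneous homomorphisms and a degree-theoretic stability argument along the Pansu blow-up. Making all the constants depend only on $C$, $\G$ and $p$ — in particular the comparison $\mathcal{S}^{P}(L_0(B_{\G}(0,1)))\gtrsim_{\G,p}C_P(L_0)$ and the uniformity of all thresholds as $x_0$ ranges over a compact set — is the most delicate bookkeeping.
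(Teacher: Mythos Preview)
Your approach is correct and genuinely different from the paper's. Both rest on the same two substantive facts --- that $\mathcal{S}^{P}(f(B(x_0,r)))\gtrsim_{\G,p} C_P(Df(x_0))\,r^{P}$ for small $r$ (this is where surjectivity of $Df$ is used), and that the Ahlfors hypothesis turns this into a lower bound for $\int_{\M}\mathcal{S}^{Q-P}(f^{-1}(m)\cap B(x_0,r))\,d\mathcal{S}^P(m)$ --- but the architecture differs. The paper packages these via an auxiliary Carath\'eodory gauge $\zeta_T(B(x,r))=r^{Q-P}\mathcal{S}^P(f(B(x,r)))$ and $(N,\ell)$-packing premeasures: a slicing argument in the style of \cite{quasiPansu} gives $\mathcal{T}(\Omega)\leq\int_{\M}\mathcal{P}^{Q-P}_{N,1}(f^{-1}(m)\cap\Omega)\,d\mathcal{S}^P(m)$, the Ahlfors hypothesis bounds the packing premeasure by the spherical measure, and the openness estimate is obtained not by degree theory but by showing pointwise and then $L^1$ convergence of indicator functions $\mathbf{1}_{\delta_{1/r}(f(x)^{-1}f(B(x,r)))}\to\mathbf{1}_{Df(x)(B(0,1))}$, uniformly in $x$, via the uniform Pansu differentiability theorem of \cite{Magnani_2013}. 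Your route --- a direct local estimate on balls, a Vitali covering with disjoint balls, and a degree/section argument for the openness --- is more elementary and avoids the packing machinery entirely; the price is that the bookkeeping of thresholds (dependence of the admissible radius on the local modulus of continuity of $Df$) is pushed into the Vitali step and the $\varepsilon$-argument, whereas the paper's packing framework absorbs this uniformly. The paper's indicator-function argument also yields the two-sided convergence $\mathcal{S}^P(f(B(x,r)))/r^P\to\mathcal{S}^P(Df(x)(B(0,1)))$, which is slightly more than the one-sided inclusion your degree argument gives, but for the inequality at hand your inclusion suffices. One small slip: with the paper's convention the Ahlfors bound reads $\mathcal{S}^{Q-P}(f^{-1}(m)\cap B(x,r))\geq C\,r^{Q-P}$, so your $C^{-1}$ should be $C$; this does not affect the argument.
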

The factor $C_P(Df(x))$ is the coarea factor of the Pansu differential $Df(x)$ (Definition \ref{coareafactor}) and $\mathcal{S}^{\alpha}$ denotes the $\alpha$-dimensional spherical Hausdorff measure built with respect to the homogeneous distance. Refer to Definition \ref{lowerahlforsregular} for the notion of locally lower Ahlfors regular set. 

It is immediate to extend Theorem \ref{maintheorem2}, to the case when $\Omega$ is a measurable subset of $\G$ (Theorem \ref{theoremmeasurable}).
As an example of its generality, notice that Theorem \ref{maintheorem2} can be applied to any continuously Pansu differentiable functions $f:\H^1 \to \R^2$ satisfying the requirements.

The proof of Theorem \ref{maintheorem2} is inspired to an abstract procedure presented in \cite{quasiPansu}, where it is used to prove a coarea-type inequality for functions from a metric space to a measure space, for packing-type measures. An analogous argument involving suitable packing measures is adapted here to prove Claim 1 of Theorem \ref{maintheorem}.

By applying Theorem \ref{maintheorem2}, we deduce new results about the slicing of measurable functions on the level sets of $f$ (Corollaries \ref{corfunzione} and \ref{corsommabile}).

In Theorem \ref{maintheorem2} the assumption about the uniform local lower Ahlfors regularity of the level sets of the map $f$ can be read also as a substitute of the existence of a suitable splitting of $\G$.
In fact, this condition is automatically verified if one assumes the existence of a $p$-dimensional homogeneous subgroup $\V \subset \G$ complementary to $\ker(Df(x))$ for every point $x \in \G$ (Corollary \ref{conspezz}). We stress that Corollary \ref{conspezz} is just an example of an application of Theorem \ref{maintheorem2}. In fact, as we discussed above, it can be derived also by the coarea formula in \cite[Theorem 1.3]{AntoineSebastiano}.

\section{Preliminary definitions and results}

When we write $a \lesssim b$, we mean that there exists some positive constant $C$ such that $a \leq Cb$. If $C$ depends on some parameter $d$, it will be specified with a subscript. For instance, by $ a \lesssim_d b$ we mean that there exists a constant $C$ depending on $d$ such that $a \leq C b$. Analogous notations are assumed for $\gtrsim$.
\begin{defi}
A \textit{Carnot group} $\mathbb{G}$ is a connected, simply connected, nilpotent Lie group such that its Lie algebra $\mathrm{Lie}(\G)$ is stratified i.e. there exist linear subspaces $V_1, \ V_2, \ \dots, V_k$ such that
$$ \mathrm{Lie}(\G)=V_1 \oplus \dots \oplus V_k$$
and $$[V_1, V_i]=V_{i+1} \ \ \ \ V_k \neq \{ 0 \} \ \ \ \ V_i= \{ 0 \} \ \ \text{if}\ \ i >k, $$ 
where $[V_1,V_i]=\text{span} \{ [X,Y]  : X \in V_1, \ Y \in V_i\}$.

The number $k$ is called the \textit{step} of $\G$.
\end{defi}
The topological dimension of $\G$ is $q= \sum_{i=1}^k \mathrm{dim}(V_i)$. The number $Q= \sum_{i=1}^k (i   \mathrm{dim}(V_i))$ is called the homogeneous dimension of $\G$.

We denote the left translation associated to an element $x \in \G$ by $\tau_{x}: \G \to \G, \ \tau_x(y)=xy.$

We can naturally introduce on $\mathrm{Lie}(\G)$ a family of non-isotropic linear dilations
$$ \delta_t(v)=\sum_{i=1}^k t^i v_i \ \ \mathrm{if} \ \  v= \sum_{i=1}^k v_i  \ \mathrm{with} \ v_i \in V_i.$$
Since $\G$ is simply connected and nilpotent, the exponential map $\mathrm{exp}:\mathrm{Lie}(\G) \to \G$ is a global diffeomorphism, then we can identify $\G$ with $\mathrm{Lie}(\G)$ and any dilation $\delta_t$ can be identified with the function $\mathrm{exp} \circ \delta_t \circ \mathrm{exp}^{-1}: \G \to \G$, and we denote this map again by $\delta_t$.

A Lie subgroup $\W \subseteq \G$ is called \textit{homogeneous} if it is closed with respect to the family of anisotropic dilations, hence if for every $t>0$, $\delta_t(\W)\subseteq \W$.

Through the exponential map according to the Baker-Campbell-Hausdorff formula we can move the group product of $\G$ to an isomorphic polynomial group product on $\mathrm{Lie}(\G)$: for $X,Y \in \mathrm{Lie}(\G)$, we call it $BCH(X,Y)$. In particular $\mathrm{Lie}(\G)$ endowed with $BCH(\cdot, \cdot)$ is isomorphic to $\G$ itself (see for instance \cite[Theorem 4.2]{ricci}), so we identify $\G$ and $\mathrm{Lie}(\G)$ as Lie groups.

We fix a basis of $\G$, $(v_1, \dots, v_q)$ and we identify $\G$ with $\R^q$ through the chosen basis as follows
\begin{equation}
\label{identificazioni}
 \varphi: \G \to \R^q, \ \varphi(p)=(x_1, \dots, x_q) \qquad \mathrm{if} \qquad p= \sum_{i=1}^q x_iv_i.
\end{equation}
The product on $\G$ can be moved to a polynomial group product on $\R^q$ (see \cite[Proposition 2.2.22]{BonfiLanco}).
By the identification of $\G$ with $\mathrm{Lie}(\G)$ and $\R^q$, $\G$ can be seen as $\R^q$ endowed at the same time with the structure of Lie group, with a polynomial group product, and the structure of Lie algebra, and hence of linear space. The inverse of an element with respect to the group product is $(x_1, \dots, x_q)^{-1}=(-x_1, \dots, -x_q)$ while the identity element is the null vector of $\R^q$ and we denote it by $0$.

We assume that $\G$ is a Carnot group endowed with a \textit{homogeneous distance} $d$, that is a distance such that $d(zx, zy)=d(x,y)$ for every $x,y,z \in \G$ and $d(\delta_t(x), \delta_t(y))=td(x,y)$ for every $t>0$ and $x,y \in \G$. 
We set $ \| x \|:=d(x,0)$ for every $x \in \G$; the metric closed ball centered at $x$ of radius $r$ is denoted by $B(x,r):=  \{ y \in \G: d(x,y) \leq r \}$  and for every set $S \subset \G$, we call $\mathrm{diam}(S):= \sup \{ d(x,y) : x,y \in S \}$. Notice that $\mathrm{diam}(B(x,r))=2r$ for all $x \in \G$ and $r>0$, for any fixed homogeneous distance $d$. When nothing more is specified, by "ball" we will mean "closed ball". For a ball $B$, we denote the radius of $B$ by $r(B)$.

We fix on $\G$ a scalar product with respect to which $(v_1, \dots, v_q)$ is an orthonormal basis; extending it by left invariance, we obtain a Riemannian metric $g$ on $\G$. The norm arising from the fixed scalar product turns out to be identified through $\varphi$ with the Euclidean metric on $\R^q$. We will denote this norm on $\G$ by $| \cdot |$.
\begin{prop}\cite[Proposition 5.15.1]{BonfiLanco}
\label{normeconfronto}
Let $\G$ be a Carnot group of step $k$ endowed with a homogeneous distance $d$. For every compact subset $K \subset \G$ there exists a constant $C_K$ such that for any $x \in K$
$$ \frac{1}{C_K} |x| \leq \| x \| \leq C_K |x|^{\frac{1}{k}}.$$
\end{prop}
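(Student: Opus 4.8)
The plan is to reduce the two-sided comparison to a compactness argument by inserting an auxiliary homogeneous gauge on which the dilations act transparently.

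First I would recall that a homogeneous distance induces the Euclidean topology on $\G\cong\R^q$; in particular $\rho:=\|\cdot\|=d(\cdot,0)$ is continuous with respect to $|\cdot|$, it vanishes only at $0$, and it is one-homogeneous, $\rho(\delta_t x)=t\,\rho(x)$ for $t>0$. Using the stratification $\mathrm{Lie}(\G)=V_1\oplus\cdots\oplus V_k$ I would write every $x$ as $x=x^{(1)}+\cdots+x^{(k)}$ with $x^{(i)}\in V_i$ and set
$$N(x):=\max_{1\le i\le k}|x^{(i)}|^{1/i}.$$
Since $x\mapsto(x^{(1)},\dots,x^{(k)})$ is a linear isomorphism, the Euclidean norm $|x|$ is comparable to $\max_i|x^{(i)}|$ up to a constant depending only on $\G$; moreover $N$ is continuous, positive off $0$, satisfies $N(\delta_t x)=t\,N(x)$, and its level set $\Sigma:=\{x:N(x)=1\}$ is compact in the Euclidean topology (it is closed and contained in $\{|x^{(i)}|\le1\ \forall i\}$) and avoids $0$.

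Next I would show that $\rho$ and $N$ are comparable on all of $\G$ with constants depending only on $\G$. On $\Sigma$ the continuous positive function $\rho$ attains a minimum $a>0$ and a maximum $b<\infty$. For $x\neq0$, writing $x=\delta_{N(x)}(y)$ with $y=\delta_{1/N(x)}(x)\in\Sigma$ and exploiting the one-homogeneity of both $\rho$ and $N$ gives $a\,N(x)\le\rho(x)\le b\,N(x)$.

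Finally I would compare $N$ with $|x|$ and $|x|^{1/k}$, splitting $K$ into a fixed neighbourhood of $0$ and a compact remainder. Pick $\varepsilon_0=\varepsilon_0(\G)>0$ so small that $|x|\le\varepsilon_0$ forces $|x^{(i)}|\le1$ for all $i$. For such $x$, from $0\le|x^{(i)}|\le1$ and $1\le i\le k$ one gets $|x^{(i)}|\le|x^{(i)}|^{1/i}$, hence $\max_i|x^{(i)}|\le N(x)$ and so $|x|\lesssim_\G N(x)$; and $|x^{(i)}|^{1/i}\lesssim_\G|x|^{1/i}\le|x|^{1/k}$, hence $N(x)\lesssim_\G|x|^{1/k}$. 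Combined with the previous step this yields $\frac{1}{C}|x|\le\rho(x)\le C|x|^{1/k}$ on $\{|x|\le\varepsilon_0\}$ with $C=C(\G)$. On the compact set $K\cap\{|x|\ge\varepsilon_0\}$ the three continuous functions $\rho$, $|\cdot|$ and $|\cdot|^{1/k}$ are each bounded above and below by positive constants, so both inequalities hold there with a constant depending on $K$; taking $C_K$ to be the largest of the constants obtained finishes the argument. The one delicate input is conceptual rather than computational: the axioms of a homogeneous distance already force $\|\cdot\|$ to be continuous for the Euclidean topology (equivalently, that its metric balls are Euclidean-bounded and contain Euclidean balls); granting this, everything reduces to homogeneity bookkeeping and compactness, the only subtlety being that the exponent $1/k$ is dictated by the top layer $V_k$, which is precisely what the gauge $N$ records.
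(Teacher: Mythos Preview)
Your proof is correct. The paper does not actually supply a proof of this proposition; it merely cites \cite[Proposition 5.15.1]{BonfiLanco} and moves on, so there is no in-paper argument to compare against. What you have written is essentially the standard argument one finds in that reference: introduce an explicit homogeneous gauge adapted to the stratification (your $N(x)=\max_i|x^{(i)}|^{1/i}$), use compactness of its unit sphere to compare it with $\|\cdot\|$, and then compare $N$ with $|\cdot|$ and $|\cdot|^{1/k}$ directly near the origin, handling the remainder of $K$ by a trivial boundedness argument. Your observation that the only nontrivial input is the continuity of $\|\cdot\|$ in the Euclidean topology is exactly right, and the rest is homogeneity bookkeeping as you say.
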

\begin{defi}[Carath\'eodory's construction]
Let $\mathcal{F} \subset \mathcal{P}(\G) $ be a non-empty family of closed subsets of a Carnot group $\G$ equipped with a homogeneous distance $d$. Let $\zeta: \mathcal{F} \to \R^+$ be a function such that $0 \leq \zeta(S)<\infty$ for any $S \in \mathcal{F}$. If $\delta>0$, and $A \subset \G$, we define
\begin{equation}
\label{caratheodory}
\phi_{\delta, \zeta}(A)= \inf \left\{ \sum_{j=0}^{\infty} \zeta(B_j) \ : \ A \subseteq \bigcup_{j=0}^\infty B_j , \ \mathrm{diam}(B_j) \leq 2 \delta, \ B_j \in \mathcal{F} \right\}.
\end{equation}
If $\mathcal{F}$ coincides with the family of closed balls, $\mathcal{F}_b$, with respect to the distance $d$ and $\zeta_S(B(x,r))=r^{\alpha}$ we call 
$$ \mathcal{S}^{\alpha}(A):= \sup_{\delta>0} \phi_{\delta, \zeta_S}(A)$$
the \textit{$\alpha$-spherical Hausdorff measure} of $A$.
\end{defi}
If $\G$ is a Carnot group of topological dimension $q$ and homogeneous dimension $Q$, then $Q$ is the metric dimension of $\G$ with respect to any homogeneous distance $d$. The spherical measures $\mathcal{S}^m$ are invariant by left translation, hence, for any positive $m$, $\mathcal{S}^m(\tau_x(A))=\mathcal{S}^m(A)$ for every $ x \in \G$ and $A \subset \G$. By the uniqueness of the Haar measure, $\mathcal{S}^Q$ coincides up to a constant with the Lebesgue measure $\mathcal{L}^q$ (for more details please refer to \cite[Propositions 2.19, 2.32]{Notesserra}). Moreover, for any positive $m$, $\mathcal{S}^m(\delta_t(A))=t^m \mathcal{S}^m(A)$ for every $t>0$ and $A \subset \G$.
\begin{defi}[Packing]
Let $N$, $\ell$ be two natural numbers, with $\ell \geq 1$. Let $X$ be a metric space.
An \textit{$\ell$-packing} is a countable collection of closed balls $\{B_i \}$ such that the concentric
balls $\ell B_i$ are pairwise disjoint. An $(N, \ell)$-packing is a collection of balls
$\{ B_i \}$ which is the union of at most $N$ $\ell$-packings
\end{defi}
In the previous definition, and from now on, by $\{ B_i \}$ we mean $\{ B_i \}_{i \in \N}$.
\begin{rema}
\label{rem:Pansu}
In a doubling metric space it is not restrictive to assume that once fixed a number $\ell  \geq 1$, there exists a natural number $N$, only depending on $\ell$, such that, for every $\delta$ small enough, there exist $(N, \ell)$-packings made of balls of radius smaller that $\delta$ that cover the whole space. For instance, in \cite[Remark 3.2]{quasiPansu} it is proved that if a metric space $X$ is doubling at small scales, fine coverings of $(N,\ell)$-packings exist, with $N$ depending only on $\ell$.
\end{rema}
\begin{defi}[Packing premeasure]
Let $\ell \geq 1$ and $N$ be natural numbers. Let $\G$ be a Carnot group endowed with a homogeneous distance $d$ and let $\delta>0$, $\alpha>0$; let $E \subset \G$, we introduce
\begin{equation*}
\begin{aligned}
\mathcal{P}_{N,\ell, \delta}^{\alpha}(E) = \sup \Big\{ \sum_{i=1}^{\infty} r(B_i)^{\alpha} :  \ \{ & B_i \} \ (N, \ell)\text{-} \mathrm{packing  \ of  \ }E , \
   E \subseteq \bigcup_{i=1}^{\infty} B_i, \\
    & B_i \ \mathrm{centered \ on \ }E, \ r(B_i) \leq \delta \Big\}
\end{aligned}
\end{equation*}
and define
$$ \mathcal{P}_{N, \ell}^{\alpha}(E):= \inf_{\delta>0}\mathcal{P}_{  N, \ell, \delta}^{\alpha}(E).$$
\end{defi}
We define also the following packing-type premeasure. In particular, in this case we do not require the packings to cover the set,
\begin{equation*}
\begin{aligned}
\tilde{\mathcal{P}}_{N,\ell, \delta}^{\alpha}(E)= \sup \Big\{ \sum_{i=1}^{\infty} r(B_i)^{\alpha} 
 : \ & \{B_i \} \ (N, \ell)\text{-} \mathrm{packing  \ of  \ }E ,\\
 & B_i \ \mathrm{centered \ on \ }E,  r(B_i) \leq \delta \Big\},
\end{aligned}
\end{equation*}
and
$$ \tilde{\mathcal{P}}_{ N, \ell}^{\alpha}(E):= \inf_{\delta>0}\tilde{\mathcal{P}}_{ N, \ell, \delta}^{\alpha}(E).$$
\begin{rema}
\label{rem:confronto}
Let $\G$ be a Carnot group endowed with a homogeneous distance $d$ and let $E \subset \G$ and $\alpha>0$. Let $\ell \geq 1$ and let $N$ be a natural numbers such that there exist fine $(N, \ell)$-packings of $\G$ that cover $\G$ (see Remark \ref{rem:Pansu}). Then
\begin{equation}
\label{confronto}
\mathcal{S}^{\alpha}(E) \leq  \mathcal{P}^{\alpha}_{N, \ell}(E).
\end{equation}
In fact, for every $\delta>0$, any $(N, \ell)$-packing of $E$ that covers $E$ with balls centered on $E$ of radius smaller that $\delta$ is a covering of $E$ of balls of radius smaller than $\delta$ so, surely, for any $\delta >0$
$$ \phi^{\alpha}_{\delta, \zeta_S}(E) \leq \mathcal{P}^{\alpha}_{   N, \ell, \delta}(E)$$ where $\phi_{\delta, \zeta_S}$ is built, as before, on the family of closed balls $\mathcal{F}_b$.
Letting $\delta $ go to zero, we get $\eqref{confronto}$.
\end{rema}
For any $k \in \N$, we denote by $\mathcal{H}^{k}_E$ the Hausdorff measure on $\G $ i.e. the measure obtained by Carath\'eodory's construction assuming that $\mathcal{F}$ is the family of all closed sets and
$$\zeta(B)=\frac{\mathcal{L}^{k}(\{y\in \R^{k}: |y | \le 1\})}{2^{k}} \mathrm{diam}(B)^{k}.$$ We denote the closed Euclidean ball of center $x$ and radius $r>0$ by $B_E(x,r)=\{ x \in \G \ : \ |x| \leq r \}.$ \\

From now on, we consider two Carnot groups endowed with homogeneous distances $(\G,d_1)$, $(\M, d_2)$, of metric dimension $Q$ and $P$ and topological dimension $q$ and $p$, respectively. The Lie algebras of $\G$ and $\M$ are stratified and we identify as above $\mathrm{Lie}(\G)$ with $\G$ and $\mathrm{Lie}(\M)$ with $\M$ so the groups can be seen as direct sum of linear subspaces 
$$ \G = V_1 \oplus \dots \oplus V_k \qquad \M= W_1 \oplus \dots \oplus W_M.$$
We denote by $\delta_t^1$ and $\delta_t^2$ the anisotropic dilations of parameter $t>0$ on $\G$ and $\M$, respectively.

By $B(x,r)$ and $B_{\M}(x,r)$ we denote the closed metric balls (of center $x$ and radius $r$) in $\G$ and $\M$, respectively. 

A map $L:\G \to \M$ is a \textit{h-homomorphism} if it is a group homomorphism such that $L(\delta_t^1(x))=\delta_t^2(L(x))$ for any $x \in \G$ and $t>0$. In this case we say $L \in \mathcal{L}(\G, \M)$. Given two h-homomorphisms $L, T \in \mathcal{L}(\G, \M)$, we define the distance $d_{\mathcal{L}(\G, \M)}(L,T):= \sup_{q \in B(0,1)} d_2(L(q), T(q))$ and we denote by $\| L \|_{\mathcal{L}(\G, \M)}:= d_{\mathcal{L}(\G, \M)}(L, I)$, where $I: \G \to \M$ denotes the map that associates to any point of $\G$ the unit element of $\M$. 

If we identify $\G$ with $\R^q$ and $\M$ with $\R^p$ through two fixed bases $(v_1, \dots, v_q)$ and $(w_1, \dots, w_p)$ as in (\ref{identificazioni}), any h-homomorphism $L$
is in particular a linear map from $\R^q$ to $\R^p$. We endow, as described above, both $\G$ and $\M$ with a scalar product, with respect to which the two fixed bases are respectively orthonormal. Then we can consider the Jacobian of $L$, $|L|=\sqrt{\det (LL^*)}$. Observe that $|L|$ is the Euclidean algebraic Jacobian of $L$ from $\R^q $ to $\R^p$, or, equivalently, it is the Jacobian of $L$ between the two Lie algebras $\G$ and $\M$ with respect to the fixed scalar products. For more details about h-homomorphisms, please refer to \cite[Section 3.1]{Magnani2001}.

An invertible h-homomorphism is called a \textit{h-isomorphism}.

We denote by $\|x\|_1:=d_1(x,0)$ for every $x \in \G$ and by $\| x \|_2:=d_2(x,0)$ for every $x \in \M$.

Let $\Omega$ be an open set in $\G$ and $f : \Omega \to \M$ be a continuous function. Fix a point $x \in \Omega$.
If there exists a h-homomorphism $L:\G \to \M$ that satisfies 
$$
\| L(x^{-1} y)^{-1} f(x)^{-1} f(y) \|_2= o ( \|x^{-1} y \|_1) \qquad \ \  \text{as} \  \ \|x^{-1} y \|_1  \to 0,
$$
$f$ is said \textit{Pansu differentiable} at $x$. If such a map $L$ exists, it is unique and it is called
the {\em Pansu differential} of $f$ at $x$. We denote it by $Df(x)$. This definition has been introduced in \cite{Pansu}.
We say that $f \in C^1_{\G}(\Omega, \M)$ or that $f$ is \textit{continuously Pansu differentiable} on $\Omega$ if the function 
$Df: \Omega \to \mathcal{L}(\G, \M)$ is continuous. 

The norm of the Pansu differential of a continuously Pansu differentiable map is continuous, more precisely the following holds.
\begin{prop}
\label{Remmodcon}
Let $\G$ and $\M$ be two Carnot groups and let $\Omega \subset \G$ be an open set.
If $f \in C^1_{\G}(\Omega, \M)$, the function $ \| Df \|_{\mathcal{L}(\G, \M)}: \Omega \to \R, \ x \to \| Df(x) \|_{\mathcal{L}(\G,\M)}$ is continuous.
\end{prop}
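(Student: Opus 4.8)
The plan is to exploit the fact that, by its very definition, $\|L\|_{\mathcal{L}(\G,\M)}$ is the distance $d_{\mathcal{L}(\G,\M)}(L,I)$ between $L$ and the trivial h-homomorphism $I$, and that $d_{\mathcal{L}(\G,\M)}$ is a genuine finite metric on $\mathcal{L}(\G,\M)$. First I would check this latter point: for $L,T \in \mathcal{L}(\G,\M)$ the map $q \mapsto d_2(L(q),T(q))$ is continuous on the compact ball $B(0,1) \subset \G$, so the supremum defining $d_{\mathcal{L}(\G,\M)}(L,T)$ is finite; symmetry is immediate; the triangle inequality follows by taking the supremum over $q \in B(0,1)$ in $d_2(L(q),T(q)) \le d_2(L(q),S(q)) + d_2(S(q),T(q))$; and $d_{\mathcal{L}(\G,\M)}(L,T)=0$ forces $L(q)=T(q)$ on $B(0,1)$, hence on all of $\G$ by homogeneity of the dilations, so $L=T$.

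Next, from the triangle inequality for $d_{\mathcal{L}(\G,\M)}$ I obtain the reverse triangle inequality
$$\bigl|\, \|L\|_{\mathcal{L}(\G,\M)} - \|T\|_{\mathcal{L}(\G,\M)} \,\bigr| = \bigl| d_{\mathcal{L}(\G,\M)}(L,I) - d_{\mathcal{L}(\G,\M)}(T,I) \bigr| \le d_{\mathcal{L}(\G,\M)}(L,T),$$
which says precisely that the map $L \mapsto \|L\|_{\mathcal{L}(\G,\M)}$ is $1$-Lipschitz on the metric space $(\mathcal{L}(\G,\M), d_{\mathcal{L}(\G,\M)})$. Finally, since by hypothesis $f \in C^1_{\G}(\Omega,\M)$, the map $Df : \Omega \to \mathcal{L}(\G,\M)$ is continuous when $\mathcal{L}(\G,\M)$ is endowed with $d_{\mathcal{L}(\G,\M)}$; composing with the $1$-Lipschitz map above gives, for $x,y \in \Omega$,
$$\bigl|\, \|Df(x)\|_{\mathcal{L}(\G,\M)} - \|Df(y)\|_{\mathcal{L}(\G,\M)} \,\bigr| \le d_{\mathcal{L}(\G,\M)}\bigl(Df(x),Df(y)\bigr),$$
and the right-hand side tends to $0$ as $y \to x$, which yields the asserted continuity.

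There is essentially no serious obstacle here: the proof is a purely formal consequence of the triangle inequality. The only point deserving a line of care — the "main obstacle", such as it is — is the verification that $d_{\mathcal{L}(\G,\M)}$ is a well-defined finite metric, which rests on the compactness of the unit ball $B(0,1) \subset \G$ together with the continuity of h-homomorphisms; once that is in place, the Lipschitz estimate and hence the statement follow at once.
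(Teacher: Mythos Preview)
Your argument is correct. The paper in fact states this proposition without proof, treating it as an immediate consequence of the definitions; your write-up supplies exactly the natural justification the paper leaves implicit, namely the reverse triangle inequality for the metric $d_{\mathcal{L}(\G,\M)}$ combined with the continuity of $x \mapsto Df(x)$ built into the definition of $C^1_{\G}(\Omega,\M)$.
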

\begin{defi}\cite[Definition 4.11]{Magnani_2013}
\label{moduluscont}
Let $f : K \to Y$ be a continuous function from a compact
metric space $(K, d_1)$ to a metric space $(Y,d_2)$. Then we define the \textit{modulus of continuity} of $f$ on $K$ as
$$\omega_{K,f}(t)= \max_{\substack{x,y \in K \\ d_1(x,y) \leq t}} d_2(f(x),f(y)).$$
\end{defi}
If we consider an open set $\Omega \subset \G$ and a map $f : \Omega \subset \G \to \M$; for $j=1, \dots, M$, we call $F_j:=\pi_j \circ f$, where $\pi_j: \M \to W_j$ is the orthogonal projection onto the $j$-th layers of $\M$. If $f$ is Pansu differentiable at $x \in \Omega$, by \cite[Theorem 4.12]{Magnani_2013}, the $F_j$ are Pansu differentiable at $x$ for $j=1,\dots, M$ and in particular $DF_1(x)= \pi_1  \circ Df(x)$ (notice that $DF_1: \Omega \to W_1)$.
\begin{rema}
\label{constants}
In the statement of Theorem \ref{uniformPdiff}, and also later in the paper, we will refer to two geometrical constants $c=c(\G,d)$ and $H=H(\G,d)$, that can be associated to any Carnot group $\G$ endowed with a homogeneous distance $d$.
In order to achieve our main result, Theorem \ref{maintheorem2}, the exact value of these constants will not be relevant. Hence, since their definition is very technical, for the sake of accuracy, we refer to \cite[Lemma 4.9, Definition 4.10]{Magnani_2013} for a precise evaluation. Here we just highlight that $c$ and $H$ depend only on the group $\G$ and the distance $d$.
\end{rema}
\begin{theo}{\cite[Theorem 1.2]{Magnani_2013}}
\label{uniformPdiff}
Let $(\G,d_1)$ and $(\M, d_2)$ be two Carnot groups endowed with homogeneous distances. Let $k$ be the step of $\G$ and $\Omega \subset \G$ be an open subset. Let us consider a map $f \in C^1_{\G}(\Omega, \M)$. Let $\Omega_1, \Omega_2 \subset \G$ be two open subsets of $\G$ such that $\Omega_2$ is compactly contained in $\Omega$ and $$ \{ x \in \G \ : \ d(x, \Omega_1) \leq c H \mathrm{diam}(\Omega_1) \} \subset \Omega_2,$$
where $c=c(\G,d_1)$ and $H=H(\G,d_1)$ are the geometric constants of Remark \ref{constants}.
Then there exists a constant $C$, only depending on $\G$, 

$\max_{x \in \Omega_2} \| DF_1(x) \|_{\mathcal{L}(\G, W_1)}$ 
and on the modulus of continuity $\omega_{\overline{\Omega_2}, DF_1}$ such that
$$ \frac{d_2(f(x)^{-1}  f(y), Df(x)(x^{-1}y))   }{d_1(x,y)} \leq C [\omega_{\overline{\Omega_2},DF_1}(cH d_1(x,y))]^{1/k^2}$$
for every $x, y \in \overline{\Omega_1}$ with $x \neq y$. 
\end{theo}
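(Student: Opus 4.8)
The plan is to reduce the full differentiability estimate to a quantitative control of the first layer and then to propagate this control through the stratification by a horizontal chaining argument. The starting remark is algebraic: since $Df(x)$ is an h-homomorphism it respects the stratification, $Df(x)(V_i)\subseteq W_i$, and it is entirely determined by its restriction to the horizontal layer $V_1$, which coincides with $DF_1(x)=\pi_1\circ Df(x)$ on $V_1$. Consequently the modulus of continuity $\omega_{\overline{\Omega_2},DF_1}$ already controls the modulus of continuity of the whole map $x\mapsto Df(x)$, and it is legitimate to expect a final bound whose right-hand side only involves $DF_1$, $\max_{\Omega_2}\|DF_1\|_{\mathcal{L}(\G,W_1)}$ and the group $\G$.

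First I would factor the displacement $z:=x^{-1}y$ into horizontal pieces. By a quantitative Chow-type factorization (this is exactly where the geometric constants $c=c(\G,d_1)$ and $H=H(\G,d_1)$ of Remark \ref{constants} enter) one writes $z=\exp(h_1)\cdots\exp(h_s)$ with each $h_j\in V_1$, with $s$ bounded by a constant depending only on $\G$, with $\sum_j\|h_j\|_1\lesssim_{\G} d_1(x,y)$, and with every partial product $x\exp(h_1)\cdots\exp(h_j)$ staying within homogeneous distance $\lesssim cH\,d_1(x,y)$ of $x$. The hypothesis that the $cH\,\mathrm{diam}(\Omega_1)$-enlargement of $\Omega_1$ is contained in $\Omega_2$ is precisely what guarantees that, for $x,y\in\overline{\Omega_1}$, the whole broken horizontal path from $x$ to $y$ lies inside $\overline{\Omega_2}$; this is what allows a single modulus $\omega_{\overline{\Omega_2},DF_1}$ and a single bound $\max_{\Omega_2}\|DF_1\|_{\mathcal{L}(\G,W_1)}$ to control every segment at once.

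On each horizontal segment $t\mapsto p\,\exp(th_j)$, $t\in[0,1]$, I would use the Pansu differentiability of $F_1$ together with the continuity of $DF_1$ to obtain a mean-value-type bound $\bigl|F_1(p\exp(h_j))-F_1(p)-DF_1(p)(h_j)\bigr|\lesssim \|h_j\|_1\,\omega_{\overline{\Omega_2},DF_1}(cH\|h_j\|_1)$, where the difference is taken in the linear space $W_1$. One then replaces $DF_1(p)$ by $DF_1(x)$ at the cost of one further factor $\omega_{\overline{\Omega_2},DF_1}(cH\,d_1(x,y))$, since $p\in\overline{\Omega_2}$ lies within distance $\lesssim cH\,d_1(x,y)$ of $x$. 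Multiplying the segment estimates along the chain by the Baker--Campbell--Hausdorff formula, and using that $Df(x)(z)$ is the BCH-product of the $Df(x)(h_j)$ whose horizontal parts are the $DF_1(x)(h_j)$, the $W_1$-component of $f(x)^{-1}f(y)$ is seen to agree with $\pi_1\,Df(x)(z)$ up to an accumulated error of order $d_1(x,y)\,\omega_{\overline{\Omega_2},DF_1}(cH\,d_1(x,y))$.

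It remains to match the higher layers $W_j$, $j\ge2$, and here lies the main obstacle. I would exploit that the $W_j$-component of the h-homomorphism $Df(x)$ is an iterated $j$-fold bracket of its horizontal part, hence determined by the same first-layer data, while the $W_j$-component of $f(x)^{-1}f(y)$ is recovered through BCH as a $j$-fold commutator of the controlled horizontal increments. Converting these layerwise Euclidean error bounds into the homogeneous distance $d_2$ via Proposition \ref{normeconfronto} costs a power $1/j$ on the $j$-th layer, and the bracket degree $j$ couples with this metric conversion; tracking the exponents through all $k$ layers and dividing by $d_1(x,y)$ produces the stated power $1/k^2$, with a constant absorbing the combinatorial factors, $s$, and $\max_{\Omega_2}\|DF_1\|_{\mathcal{L}(\G,W_1)}$. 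I expect the genuine difficulty to be exactly this bookkeeping: controlling how a small horizontal error propagates through the iterated commutators into the top layer and then back through the non-isotropic norm comparison, while keeping the constant $C$ dependent only on $\G$, $\max_{\Omega_2}\|DF_1\|_{\mathcal{L}(\G,W_1)}$ and $\omega_{\overline{\Omega_2},DF_1}$ --- a uniformity that rests entirely on the enlarged-neighbourhood hypothesis forcing every intermediate point of the path into $\overline{\Omega_2}$.
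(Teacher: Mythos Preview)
The paper does not contain a proof of this theorem: it is quoted verbatim as \cite[Theorem 1.2]{Magnani_2013} and used as a black box (notably inside Claim~3 of Theorem~\ref{maintheorem}). So there is no ``paper's own proof'' to compare against; you are in effect sketching the argument from Magnani's original paper.

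That said, your outline is faithful to Magnani's strategy. The geometric constants $c$ and $H$ of Remark~\ref{constants} come precisely from the quantitative horizontal factorization you describe (this is \cite[Lemma 4.9, Definition 4.10]{Magnani_2013}), and the enlarged-neighbourhood hypothesis is indeed there to keep the intermediate points of the broken path inside $\overline{\Omega_2}$. The reduction to $DF_1$ and the propagation through iterated brackets to the higher layers, followed by the norm conversion of Proposition~\ref{normeconfronto}, is also how the exponent $1/k^2$ arises in \cite{Magnani_2013}. Where your sketch is thinnest is exactly where you flag it: the ``mean-value-type bound'' on horizontal segments requires care (one works with the differentiability of $F_1$ along horizontal curves, not a naive mean-value theorem), and the layerwise error accounting that produces $1/k^2$ rather than, say, $1/k$ is a genuine computation spread over several lemmas in \cite{Magnani_2013}. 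Your sketch would not stand alone as a proof, but as a roadmap of the cited argument it is accurate.
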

\begin{rema}
By \cite[Theorem 4.12]{Magnani_2013}, if $f$ is continuously Pansu differentiable, then $x \to DF_1(x)$ is a continuous map from $\Omega$ to $\mathcal{L}(\G, W_1)$, and so by Proposition \ref{Remmodcon}, the modulus of continuity $\omega_{\overline{\Omega_2}, DF_1}(s)$ goes to zero as $s$ goes to zero.
\end{rema}
\begin{defi}[Coarea factor]
\label{coareafactor}
Let $L: \G \to \M$ be a h-homomorphism and let be $Q \geq P$. We call \textit{coarea factor} of $L$, $C_P(L)$, the unique constant such that 
$$ \mathcal{S}^Q(B(0,1)) C_P(L)= \int_{\M} \mathcal{S}^{Q-P}(L^{-1}(\xi) \cap B(0,1) ) d \mathcal{S}^P(\xi).$$
\end{defi}
By \cite[Proposition 1.12]{Magnanicoareaineq}, whose proof relies on the left invariance of the involved spherical Hausdorff measures, on the uniqueness of the Haar measure on Carnot groups and on the Euclidean coarea formula, $C_P(L)$ is well defined, and it is not equal to zero if and only if $L$ is surjective and in this case it can be computed as follows
\begin{equation}
\label{calcolocoarea}
\begin{aligned}
C_P(L)&= \frac{\mathcal{S}^{Q-P}(\ker(L) \cap B(0,1))}{\mathcal{H}_E^{q-p}(\ker(L) \cap B(0,1))} \frac{\mathcal{S}^P(B_{\M}(0,1))}{\mathcal{L}^p(B_{\M}(0,1))}\frac{\mathcal{L}^q(B(0,1))}{\mathcal{S}^Q(B(0,1))}|L|\\
&= Z \frac{\mathcal{S}^{Q-P}(\ker(L) \cap B(0,1))}{\mathcal{H}_E^{q-p}(\ker(L) \cap B(0,1))} |L|,\\
\end{aligned}
\end{equation}
where $Z=\frac{\mathcal{S}^P(B_{\M}(0,1))}{\mathcal{L}^p(B_{\M}(0,1))}\frac{\mathcal{L}^q(B(0,1))}{\mathcal{S}^Q(B(0,1))}$. Observe that $Z$ is a geometrical constant not depending on $L$. 

We now introduce the definition of Federer density.
\begin{defi}
Let $\G$ be a Carnot group endowed with a homogeneous distance $d$.
Let $\mathcal{F}_b$ be the family of closed balls with positive radius in $\G$. Let $\alpha>0$, $x \in \G$ and let $\mu$ be a Borel regular measure on $\G$. We call \emph{spherical $\alpha$-Federer density of $\mu$ at $x $} the real number $$\theta^{\alpha}(\mu,x):= \inf_{\epsilon>0} \sup \left\{ \frac{\mu(B)}{r(B)^{\alpha}} : x \in B \in \mathcal{F}_b, \ \mathrm{diam}(B) < \epsilon \right\}.$$
\end{defi}

The Federer density can be used to represent the abstract way to differentiate any Borel regular measure absolutely continuous with respect to the $\alpha$-spherical one in a metric space satisfying general hypotheses.

\begin{theo}[{\cite[Theorem 7.2]{Magnani2019}}]
\label{abstractdiff}
Let $\alpha>0$ and let $\mu$ be a Borel regular measure on $\G$ such that there exists an open numerable covering of $\G$ whose elements have $\mu$-finite measure. Let $d$ be a homogeneous distance. If $B \subset A \subset \mathbb{G}$ are Borel sets, then $\theta^{\alpha}(\mu, \cdot)$ is a Borel function on $A$. In addition, if $\mathcal{S}^{\alpha}(A) < \infty$ and $\mu \llcorner A$ is absolutely continuous with respect to $\mathcal{S}^{\alpha} \llcorner A$, then we have 
$$ \mu(B)= \int_B \theta^{\alpha}(\mu,x) \ d  \mathcal{S}^{\alpha}(x).$$
\end{theo}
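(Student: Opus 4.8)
The plan is to deduce the representation formula from two sharp one-sided comparisons between $\mu$ and $t\,\mathcal{S}^\alpha$ on the level sets of the density, and then to glue them by a dyadic decomposition of $A$ according to the values of $\theta^\alpha(\mu,\cdot)$. I first dispose of measurability. Writing $\Theta_\epsilon(x):=\sup\{\mu(B)/r(B)^\alpha : x\in B\in\mathcal{F}_b,\ \mathrm{diam}(B)<\epsilon\}$, the map $\epsilon\mapsto\Theta_\epsilon(x)$ is nondecreasing, so $\theta^\alpha(\mu,x)=\inf_{n\in\N}\Theta_{1/n}(x)$ is a countable infimum. Each $\Theta_\epsilon$ is lower semicontinuous: if $x_j\to x$ and a ball $B(y,r)\ni x$ with $2r<\epsilon$ nearly attains $\Theta_\epsilon(x)$, then for every small $\eta>0$ the ball $B(y,r+\eta)$ contains $x_j$ for large $j$ and still has diameter below $\epsilon$, whence $\liminf_j\Theta_\epsilon(x_j)\ge \mu(B(y,r))/(r+\eta)^\alpha$; letting $\eta\to0$ gives $\liminf_j\Theta_\epsilon(x_j)\ge\Theta_\epsilon(x)$. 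Thus $\theta^\alpha(\mu,\cdot)$ and all its level sets are Borel; the hypothesis on the open countable cover by sets of finite $\mu$-measure guarantees that the balls occurring here have finite measure, so all quantities are well defined.

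The easy comparison is the upper one: if $\theta^\alpha(\mu,x)<t$ for every $x$ in a Borel set $E$, then $\mu(E)\le t\,\mathcal{S}^\alpha(E)$. I would set $E_k:=\{x\in E:\mu(B)\le t\,r(B)^\alpha\text{ for all }B\in\mathcal{F}_b\text{ with }x\in B,\ \mathrm{diam}(B)<1/k\}$, an increasing sequence of Borel sets with union $E$. For $\delta<1/(2k)$ and any cover $\{B_j\}$ of $E_k$ by balls of diameter at most $2\delta$, which we may assume to consist of balls meeting $E_k$, each ball contains a point of $E_k$ and so satisfies $\mu(B_j)\le t\,r(B_j)^\alpha$; hence $\mu(E_k)\le\sum_j\mu(B_j)\le t\sum_j r(B_j)^\alpha$, and taking the infimum over covers yields $\mu(E_k)\le t\,\phi_{\delta,\zeta_S}(E_k)\le t\,\mathcal{S}^\alpha(E)$, with $k\to\infty$ concluding. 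This step uses only the Carath\'eodory (covering) definition of $\mathcal{S}^\alpha$.

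The hard comparison is the lower one: if $\theta^\alpha(\mu,x)>t$ for every $x$ in a Borel set $E\subset A$, then $\mu(E)\ge t\,\mathcal{S}^\alpha(E)$. The definition of the density furnishes a fine cover of $E$ by closed balls $B$ with $\mu(B)>t\,r(B)^\alpha$ and arbitrarily small diameter, and outer regularity of $\mu$ lets me keep these balls inside an open set $U\supset E$ with $\mu(U)<\mu(E)+\eta$. The crucial point is to extract a \emph{disjoint} countable subfamily $\{B_j\}$ of this fine cover with $\mathcal{S}^\alpha\big(E\setminus\bigcup_j B_j\big)=0$; since $\mu\llcorner A\ll\mathcal{S}^\alpha\llcorner A$, the residual set is $\mu$-null and, being $\mathcal{S}^\alpha$-null, can be covered by balls of total $r^\alpha$-content below $\eta$. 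Disjointness then gives, for every $\delta$, $\phi_{\delta,\zeta_S}(E)\le\sum_j r(B_j)^\alpha+\eta<\tfrac1t\sum_j\mu(B_j)+\eta=\tfrac1t\mu\big(\bigcup_j B_j\big)+\eta\le\tfrac1t\mu(U)+\eta$, so $\mathcal{S}^\alpha(E)\le\tfrac1t\mu(E)$ after $\eta\to0$. I expect this disjoint selection to be the main obstacle: a plain $5r$-covering would only produce $\mathcal{S}^\alpha(E)\le 5^\alpha t^{-1}\mu(E)$, and the factor $5^\alpha$ would destroy the exact identity. The reason one can hope for the sharp constant is precisely the matching between the spherical measure $\mathcal{S}^\alpha$, built from arbitrary closed balls, and a density taken over \emph{all} balls containing $x$: this is what makes the family of closed balls a Vitali relation for the finite Borel measure $\mathcal{S}^\alpha\llcorner A$, so that fine covers admit disjoint $\mathcal{S}^\alpha$-almost-everywhere subcovers with no loss. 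Here the geometry of $\G$ enters, and since Besicovitch-type selection may fail for general homogeneous distances, one must rely on the Vitali covering property of the finite measure $\mathcal{S}^\alpha\llcorner A$ rather than on a Besicovitch theorem.

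Finally I would assemble the formula. Fix $s>1$ and set $E_k:=A\cap\{s^k\le\theta^\alpha(\mu,\cdot)<s^{k+1}\}$ for $k\in\mathbb{Z}$, together with $A\cap\{\theta^\alpha(\mu,\cdot)=0\}$ and $A\cap\{\theta^\alpha(\mu,\cdot)=+\infty\}$. On each $B\cap E_k$ the two comparisons place both $\mu(B\cap E_k)$ and $\int_{B\cap E_k}\theta^\alpha(\mu,\cdot)\,d\mathcal{S}^\alpha$ inside the interval $[s^k\mathcal{S}^\alpha(B\cap E_k),\,s^{k+1}\mathcal{S}^\alpha(B\cap E_k)]$, hence they agree up to the factor $s$; summing over $k$ and letting $s\downarrow1$ yields $\mu(B)=\int_B\theta^\alpha(\mu,\cdot)\,d\mathcal{S}^\alpha$ on $B\cap\{0<\theta^\alpha(\mu,\cdot)<\infty\}$. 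The upper comparison with $t\to0$ forces both sides to vanish on $\{\theta^\alpha(\mu,\cdot)=0\}$, while on $\{\theta^\alpha(\mu,\cdot)=+\infty\}$ absolute continuity together with the lower comparison makes both sides coincide (both zero if this set is $\mathcal{S}^\alpha$-null, both $+\infty$ otherwise). Collecting the three contributions gives the stated identity for every Borel $B\subset A$.
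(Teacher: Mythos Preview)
The paper does not prove this statement: it is quoted as \cite[Theorem~7.2]{Magnani2019} and then invoked as a black box (for instance in the proof of Proposition~\ref{areasubgroup}). There is therefore no in-paper argument to compare your attempt against.

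Your outline is the standard density-theorem scheme and is, modulo one point, sound. The measurability step and the upper comparison are correct as written, and the final dyadic assembly is routine. You have also correctly located the real content: the lower comparison $\mu(E)\ge t\,\mathcal{S}^\alpha(E)$ on $E\subset\{\theta^\alpha(\mu,\cdot)>t\}$ requires extracting from the fine cover a \emph{disjoint} subfamily exhausting $E$ up to an $\mathcal{S}^\alpha$-null set, and you rightly note that Besicovitch may fail for a generic homogeneous distance while a plain $5r$-Vitali would lose a factor $5^\alpha$ and destroy the identity. The gap is that you only \emph{assert} that closed balls form a Vitali relation for the finite measure $\mathcal{S}^\alpha\llcorner A$; you give no mechanism for this. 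That assertion is precisely the substantive lemma behind the cited theorem: it uses the diametric regularity $\mathrm{diam}(B(x,r))=2r$ of homogeneous balls to run a Federer-style argument (in the spirit of \cite[2.8.17, 2.10.17(2)]{Federer}) showing that the spherical measure itself enjoys the Vitali covering property with respect to closed balls. Until that covering statement is supplied, the lower bound---and hence the exact representation formula---is not established by your sketch.
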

\begin{defi}
\label{lowerahlforsregular}
Let $(X,d,\mu)$ be a metric measure space, consider a subset $E \subset X$ and two positive numbers $\alpha,C>0$. We say that $E$ is \textit{locally $C$-lower Ahlfors $\alpha$-regular with respect to $\mu$} if there is $\tilde{r}>0$ such that for all $x \in E$ and $0<r < \tilde{r}$,
$$ \mu(B(x,r) \cap E) \geq C r^{\alpha}.$$
If we need to stress the value of $\tilde{r}$, we say that $E$ is $\tilde{r}$-locally $C$-lower Ahlfors $\alpha$-regular with respect to $\mu$.
If $\tilde{r}= \infty$, we say that $E$ is $C$-lower Ahlfors $\alpha$-regular with respect to $\mu$.
\end{defi}
In \cite{Magnanicoareaineq}, also relying on a coarea estimate for Lipschitz
maps in arbitrary metric spaces due to Federer \cite[2.10.25]{Federer}, the author proved a coarea-type inequality. We recall it here, adapting it to our context.
\begin{theo}\cite[Theorem 2.6]{Magnanicoareaineq}
\label{coareamagnani}
Let $A \subseteq \G$ be a measurable set and let $f: A \to \M$ be a Lipschitz map, then
\begin{equation}
\label{magineq}
  \int_{\M} \mathcal{S}^{Q-P}(f^{-1}(m) \cap A) d \mathcal{S}^P(m) \leq  \int_{A} C_P(Df(x)) d \mathcal{S}^{Q}(x).
\end{equation}
\end{theo}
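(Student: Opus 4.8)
The plan is to exhibit the left-hand side as the total mass of a measure on $A$ whose density with respect to $\mathcal{S}^Q$ is dominated pointwise by the coarea factor, and then to integrate this density bound. For a Borel set $E\subseteq A$ put
$$\nu(E):=\int_{\M}\mathcal{S}^{Q-P}(f^{-1}(m)\cap E)\,d\mathcal{S}^P(m),$$
which, granting the measurability of $m\mapsto\mathcal{S}^{Q-P}(f^{-1}(m)\cap E)$ already implicit in the statement, defines a Borel measure by monotone convergence, so that the assertion reads $\nu(A)\le\int_AC_P(Df)\,d\mathcal{S}^Q$. After replacing $A$ by $A\cap B(0,R)$ and letting $R\to\infty$ by monotone convergence on both sides, I may assume $\mathcal{S}^Q(A)<\infty$. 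Here I would invoke Federer's coarea estimate for Lipschitz maps in metric spaces \cite[2.10.25]{Federer}, applied to $f\colon A\to\M$ with the target measured by the Ahlfors $P$-regular measure $\mathcal{S}^P$: this produces the crude bound $\nu(E)\le c\,(\mathrm{Lip}\,f)^P\,\mathcal{S}^Q(E)$, whence $\nu\ll\mathcal{S}^Q$ and $\nu(A)<\infty$.

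By Theorem \ref{abstractdiff}, with $\alpha=Q$ and $\mu=\nu$, absolute continuity together with $\mathcal{S}^Q(A)<\infty$ gives $\nu(A)=\int_A\theta^Q(\nu,x)\,d\mathcal{S}^Q(x)$, so it suffices to prove $\theta^Q(\nu,x_0)\le C_P(Df(x_0))$ for $\mathcal{S}^Q$-a.e. $x_0\in A$. The model case is that of an h-homomorphism $L$: the defining identity of $C_P(L)$, together with the homogeneity $\mathcal{S}^m(\delta_t(\cdot))=t^m\mathcal{S}^m(\cdot)$ and the left invariance of the spherical measures, yields
$$\int_{\M}\mathcal{S}^{Q-P}(L^{-1}(\xi)\cap B(0,r))\,d\mathcal{S}^P(\xi)=r^Q\,\mathcal{S}^Q(B(0,1))\,C_P(L),$$
so the coarea measure of $L$ is exactly $C_P(L)\,\mathcal{S}^Q$ (this is \cite[Proposition 1.12]{Magnanicoareaineq}). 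Since the Federer density of $\mathcal{S}^Q$ equals $1$ at $\mathcal{S}^Q$-a.e. point, the density of this model measure is $C_P(L)$, and this is the value to be matched from above.

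The core step is a blow-up at a point $x_0$ where $f$ is Pansu differentiable — almost every point, by Pansu's theorem \cite{Pansu} — with $L:=Df(x_0)$. Fixing $\eta>0$, differentiability gives $r_\eta>0$ such that $d_2(f(x_0)^{-1}f(y),L(x_0^{-1}y))\le\eta\,d_1(x_0,y)$ for all $y\in B(x_0,r_\eta)$; consequently, for every ball $B\ni x_0$ of diameter below $r_\eta$, each fibre $f^{-1}(m)\cap B$ lies within an $\eta\,r(B)$-neighbourhood of the corresponding coset of $\ker L$, and conversely. Comparing $\nu(B)=\int_{\M}\mathcal{S}^{Q-P}(f^{-1}(m)\cap B)\,d\mathcal{S}^P(m)$ with its linear counterpart $C_P(L)\,\mathcal{S}^Q(B)$ and controlling the discrepancy, I would obtain $\nu(B)\le(1+\omega(\eta))\,C_P(L)\,\mathcal{S}^Q(B)$ with $\omega(\eta)\to0$. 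Dividing by $r(B)^Q$, taking the supremum over such balls, using $\theta^Q(\mathcal{S}^Q,x_0)=1$, and finally letting $\eta\to0$ gives $\theta^Q(\nu,x_0)\le C_P(L)=C_P(Df(x_0))$; integrating this bound against $\mathcal{S}^Q$ concludes the proof.

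I expect the decisive difficulty to be exactly this comparison. The spherical measure of fibres is only lower semicontinuous under the convergence of $f$ to its differential, so the upper estimate on $\nu(B)$ cannot come from convergence of the fibres alone; it is the Lipschitz hypothesis, again through Federer's covering estimate \cite[2.10.25]{Federer}, that supplies it. The estimate bounds, for a fine covering of $B$ by small sets, the number of pieces that meet a given fibre, and thereby controls the extra $\mathcal{S}^{Q-P}$-mass that the nonlinear remainder of $f$ can manufacture by the $\mathcal{S}^Q$-measure of a thin layer of thickness of order $\eta\,r(B)$. Showing that this excess is genuinely of lower order than $r(B)^Q$, uniformly as $r(B)\to0$ and $\eta\to0$, is the step demanding the most care.
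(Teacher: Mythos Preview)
The paper does not give its own proof of this statement: Theorem~\ref{coareamagnani} is quoted verbatim from \cite[Theorem 2.6]{Magnanicoareaineq} and used as a black box (e.g.\ in the proof of Corollary~\ref{corfunzione}). There is therefore nothing in the present paper to compare your argument against; what follows is an assessment of your sketch against the actual content of Magnani's original proof.

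Your overall architecture --- define the ``coarea measure'' $\nu$, use Federer's metric coarea estimate \cite[2.10.25]{Federer} to get $\nu\ll\mathcal{S}^Q$, differentiate via a density theorem, and bound the density by $C_P(Df(x_0))$ through a blow-up at points of Pansu differentiability --- is indeed the strategy of \cite{Magnanicoareaineq}. In that sense you have correctly identified the skeleton.

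However, what you have written is a programme, not a proof, and the gap is exactly where you locate it yourself. The sentence ``controlling the discrepancy, I would obtain $\nu(B)\le(1+\omega(\eta))\,C_P(L)\,\mathcal{S}^Q(B)$'' is the whole theorem: everything else is soft. Your last paragraph proposes to extract this from Federer's covering estimate applied to a fine cover of $B$, bounding the excess by the $\mathcal{S}^Q$-measure of a ``thin layer'' of thickness $\eta\,r(B)$. Two concrete issues make this unconvincing as stated. First, Federer's estimate \cite[2.10.25]{Federer} carries a structural multiplicative constant; invoking it again at the blow-up scale does not by itself produce the \emph{sharp} constant $C_P(L)$, only something comparable. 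Second, the geometric picture ``each fibre of $f$ lies in an $\eta r(B)$-neighbourhood of a coset of $\ker L$'' does not translate into an upper bound on $\mathcal{S}^{Q-P}(f^{-1}(m)\cap B)$ without further input: as you note, $\mathcal{S}^{Q-P}$ is not upper semicontinuous under Hausdorff convergence of compact sets, so proximity to a coset gives no a priori control on the measure of the perturbed fibre. In Magnani's argument this is handled not by a generic layer estimate but by rescaling $f$ around $x_0$, so that the rescaled maps converge \emph{uniformly} on $B(0,1)$ to $L$, and then exploiting that the limiting fibres are cosets of the fixed homogeneous subgroup $\ker L$, whose tubular neighbourhoods have controlled $\mathcal{S}^{Q-P}$-measure; the Lipschitz bound enters to dominate the integrand and pass to the limit under the $\mathcal{S}^P$-integral. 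You gesture at this, but the actual upper-semicontinuity step for the fibre measures is missing. A minor additional point: the Federer density in Theorem~\ref{abstractdiff} is taken over \emph{off-centre} balls containing $x_0$, so your blow-up must be uniform over all such balls, not only those centred at $x_0$; this is harmless because the linear coarea measure $\nu_L$ is left-invariant, but it should be said.
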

\section{Coarea-type Inequality}
We will need the following simple proposition in order to prove the main theorem.
\begin{prop}
\label{areasubgroup}
Let $\G$ be a Carnot group endowed with a homogeneous distance $d$.
Let $\W \subset \G$ be a homogeneous subgroup of topological dimension $n$ and metric dimension $N$. 
Then for every Borel set $B \subset \W$ we have
$$ \mathcal{L}^n (B) = \sup_{ w \in B(0,1) } \mathcal{H}^n_E(B(w,1) \cap \W) \ \mathcal{S}^N (B).$$
\end{prop}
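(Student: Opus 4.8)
The plan is to show that the restriction of $\mathcal{S}^N$ to $\W$ is a constant multiple of $\mathcal{H}^n_E\llcorner\W=\mathcal{L}^n$ and to pin down the constant by means of the differentiation Theorem \ref{abstractdiff}. First I would collect the structural facts about $\W$. Being a homogeneous subgroup, $\W$ is, in the identification $\G\cong\mathrm{Lie}(\G)\cong\R^q$, a graded (hence linear) subspace of dimension $n$ which is invariant under every dilation $\delta_t$, and it is a simply connected nilpotent Lie group; consequently $\mathcal{H}^n_E\llcorner\W$ coincides with the $n$-dimensional Lebesgue measure $\mathcal{L}^n$ of the subspace $\W$, which in these exponential coordinates is the bi-invariant Haar measure of $\W$. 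In particular $\tau_z$ preserves $\mathcal{H}^n_E\llcorner\W$ for every $z\in\W$, while the restriction $\delta_t|_\W$ is a linear automorphism of $\W$ with Jacobian $t^N$ (because $N$ is the homogeneous, hence the metric, dimension of $\W$), so that $\mathcal{H}^n_E(\delta_t(A))=t^N\,\mathcal{H}^n_E(A)$ for every $A\subseteq\W$. On the other hand $\mathcal{S}^N\llcorner\W$ is invariant under left translations by elements of $\W$ (being the restriction of the left-invariant measure $\mathcal{S}^N$) and it is locally finite since $\W$ has metric dimension $N$; for the latter one covers $B(0,R)\cap\W$ by the radius-$\delta$ balls centered at a maximal $\delta$-separated subset of $B(0,R)\cap\W$ and, using the above scaling to control their number, gets a bound of the form $\mathcal{S}^N(B(0,R)\cap\W)\le (4R)^N$ uniformly in $\delta$.

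Set $\kappa:=\sup_{w\in B(0,1)}\mathcal{H}^n_E(B(w,1)\cap\W)$; one checks easily that $0<\kappa<\infty$. The first real step is the scaling identity
$$\mathcal{H}^n_E(B(w,r)\cap\W)=r^N\,\mathcal{H}^n_E\big(B(\delta_{1/r}(w),1)\cap\W\big)\qquad\text{for all }w\in\G,\ r>0,$$
which follows from $B(w,r)=\delta_r\big(B(\delta_{1/r}(w),1)\big)$ together with the $\delta_t$-invariance of $\W$ and the scaling of $\mathcal{H}^n_E$. Next I would show that the supremum defining $\kappa$ does not increase if $w$ is allowed to range over all of $\G$: if $B(w,1)\cap\W\neq\emptyset$, choose $z\in B(w,1)\cap\W$ and set $u:=z^{-1}w$, so that $\|u\|\le 1$ and $B(w,1)=\tau_z(B(u,1))$; since $z\in\W$ this gives $B(w,1)\cap\W=\tau_z(B(u,1)\cap\W)$ and, by $\tau_z$-invariance of $\mathcal{H}^n_E\llcorner\W$, $\mathcal{H}^n_E(B(w,1)\cap\W)=\mathcal{H}^n_E(B(u,1)\cap\W)\le\kappa$. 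Combining the two facts we obtain $\mathcal{H}^n_E(B(w,r)\cap\W)\le\kappa\,r^N$ for every $w\in\G$ and $r>0$. From this the inequality $\mathcal{L}^n(B)\le\kappa\,\mathcal{S}^N(B)$ for Borel $B\subseteq\W$ is immediate: any covering of $B$ by closed balls $B(w_j,r_j)$ with $r_j\le\delta$ satisfies $\mathcal{H}^n_E(B)\le\sum_j\mathcal{H}^n_E(B(w_j,r_j)\cap\W)\le\kappa\sum_j r_j^N$, and one takes the infimum over coverings and the supremum over $\delta$. In particular $\mathcal{H}^n_E\llcorner\W$ is absolutely continuous with respect to $\mathcal{S}^N\llcorner\W$.

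For the reverse inequality I would apply Theorem \ref{abstractdiff} to $\mu:=\mathcal{H}^n_E\llcorner\W$ with $\alpha=N$: its hypotheses hold on every bounded Borel set $A\subseteq\W$ by the preceding paragraph (finiteness of $\mathcal{S}^N(A)$ and absolute continuity) and by the evident countable open cover of $\G$ by sets of finite $\mu$-measure. It then remains to compute the Federer density $\theta^N(\mu,x)$ at points $x\in\W$. Using the $\W$-invariance of $\mu$ it suffices to treat $x=0$; since the closed balls $B(w,r)$ containing $0$ are exactly those with $\|w\|\le r$, the assignment $w\mapsto v:=\delta_{1/r}(w)$ puts them in bijection with the points $v\in B(0,1)$, and by the scaling identity $\mathcal{H}^n_E(B(w,r)\cap\W)/r^N=\mathcal{H}^n_E(B(v,1)\cap\W)$; as every such $v$ is attained by balls of arbitrarily small radius, this yields $\theta^N(\mu,0)=\sup_{v\in B(0,1)}\mathcal{H}^n_E(B(v,1)\cap\W)=\kappa$. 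Theorem \ref{abstractdiff} now gives $\mathcal{H}^n_E(B)=\int_B\theta^N(\mu,x)\,d\mathcal{S}^N(x)=\kappa\,\mathcal{S}^N(B)$ for every Borel $B$ contained in a bounded subset of $\W$, and the general statement follows by exhausting $\W$ with bounded sets and rewriting $\mathcal{H}^n_E\llcorner\W=\mathcal{L}^n$, which is precisely $\mathcal{L}^n(B)=\sup_{w\in B(0,1)}\mathcal{H}^n_E(B(w,1)\cap\W)\,\mathcal{S}^N(B)$.

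The step I expect to require the most care is the structural claim of the first paragraph, namely that $\mathcal{H}^n_E\llcorner\W$ is a translation-invariant (Haar) measure on the group $\W$ enjoying the clean dilation scaling $\mathcal{H}^n_E(\delta_t(A))=t^N\mathcal{H}^n_E(A)$: this invariance and homogeneity are exactly what make both the reduction of the supremum to $B(0,1)$ and the Federer density computation go through. Alongside this, the routine but necessary checks — that $\mathcal{S}^N\llcorner\W$ is locally finite and that $\mathcal{H}^n_E\llcorner\W\ll\mathcal{S}^N\llcorner\W$ — are what legitimize the use of Theorem \ref{abstractdiff}, and hence the passage from the pointwise density to the measure identity.
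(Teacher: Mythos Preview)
Your proposal is correct and follows essentially the same route as the paper: both arguments define $\mu=\mathcal{H}^n_E\llcorner\W$, check that Theorem~\ref{abstractdiff} applies, and compute the Federer density $\theta^N(\mu,x)$ via the translation--dilation identity $\mu(B(z,t))/t^N=\mathcal{H}^n_E\big(B(\delta_{1/t}(x^{-1}z),1)\cap\W\big)$, concluding that the density is the constant $\sup_{w\in B(0,1)}\mathcal{H}^n_E(B(w,1)\cap\W)$. The only difference is in how the hypotheses of Theorem~\ref{abstractdiff} are verified: the paper simply quotes that $\mu$ and $\mathcal{S}^N\llcorner\W$ are both Haar measures on $\W$ (hence proportional), whereas you obtain $\mu\ll\mathcal{S}^N\llcorner\W$ directly from the covering bound $\mathcal{H}^n_E(B(w,r)\cap\W)\le\kappa r^N$ and separately argue local finiteness of $\mathcal{S}^N\llcorner\W$---a slightly more self-contained but equivalent justification.
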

\begin{proof}
Let us consider $\mu_{\W}(A):=\mathcal{H}^n_E \llcorner \W(A)= \mathcal{L}^n(\W \cap A)$ for any set $A \subset \G$. Since both $\mu_{\W}$ and $\mathcal{S}^{N} \llcorner \W$ are Haar measures on $\W$ (for example refer to \cite[Lemma 3.1]{AntoineSebastiano}), they coincide up to a constant, so we can apply Theorem \ref{abstractdiff}, hence we know that $\mu_{\W}(A)= \int_A \theta^{N}(\mu_{\W}, x) \mathcal{S}^N(x)$.
Let us then compute for any $x \in \W$
$$ \theta^N(\mu_{\W}, x)= \inf_{r>0} \sup_{ \substack{ \{ z : x \in B(z,t) \} \\ 0<t<r}} \frac{\mu_{\W}(B(z,t))}{t^N}.$$
Notice that for every $t>0$, $z \in B(x,t)$, 
\begin{equation}
\begin{aligned}
\frac{\mu_{\W}(B(z,t))}{t^N}=& \frac{\mathcal{L}^n( z \delta_t( B(0,1)) \cap \W)}{t^N} \\
= & \frac{\mathcal{L}^n(x^{-1}z\delta_t (B(0,1)) \cap x^{-1} \W)}{t^N}\\
=& \mathcal{L}^n(\delta_{1/t}(x^{-1}z) B(0,1) \cap  \W),
\end{aligned}
\end{equation}
hence 
\begin{equation*}
\begin{aligned}
\theta^N(\mu_{\W}, x)= & \inf_{r>0} 
 \sup_{ \{ w : d(w,0) \leq 1 \}} \mathcal{H}_E^n(  B(w,1) \cap  \W) \\
 = & \sup_{ w \in B(0,1) } \mathcal{H}_E^n(  B(w,1) \cap  \W).
\end{aligned}
\end{equation*}
\end{proof}
Before proving our main result, Theorem \ref{maintheorem2}, we make a preliminary observation.
\begin{rema}
It is immediate to observe that any continuously Pansu differentiable function is locally metric Lipschitz, hence by \cite[Theorem 2.1]{areaimpliescoarea} for every measurable set $A \subset \G$ the function $m \to \mathcal{S}^{Q-P}(A \cap f^{-1}(m))$ is $\mathcal{S}^P$-measurable. 
\end{rema}
Theorem \ref{maintheorem2} is a direct consequence of the following result.
\begin{theo}
\label{maintheorem}
Let $(\G,d_1)$, $(\M, d_2)$ be two Carnot groups, endowed with homogeneous distances, of metric dimension $Q, \ P$ and topological dimension $q, \ p$, respectively. Let $\Omega'$ be an open subset of $\G$. Let $f \in C^1_{\G}(\Omega', \M)$ be a function and assume that $Df(x)$ is surjective at every $x \in \Omega'$.

Let $\Omega \Subset \Omega'$ be a closed bounded set such that there exists an open set $\Omega''$ and a positive number $s>0$ such that $\Omega''$ is compactly contained in $\Omega'$ and such that, if we set $\Omega_s:= \{ x \in \G : d(x,\Omega)<s \}$ and $R:=c H \mathrm{diam}(\Omega_s),$ 
\begin{equation}
\label{ipotesimagnani}
\Omega^s_R:= \{ x \in \G  : d(x, \Omega_s) \leq R \} \subset \Omega'',
\end{equation}
where $ c=c(\G,d_1)$ and $H=H(\G,d_1)$ are the two geometric constants associated to $(\G, d_1)$ of Remark \ref{constants}. 
Assume that there exist two constants $\tilde{r}, C>0$ such that for $\mathcal{S}^P$-a.e. $m \in \M$ the level set $ f^{-1}(m) $ is $\tilde{r}$-locally 
$C$-lower Ahlfors $(Q-P)$-regular with respect to the measure $\mathcal{S}^{Q-P}$.
Then there exists a constant $L=L(C, \G, p)$ such that
$$\int_{\Omega} C_P(Df(x)) d \mathcal{S}^{Q}(x) \leq L \int_{\M} \mathcal{S}^{Q-P}(f^{-1}(m) \cap \Omega) d \mathcal{S}^P(m).$$
\end{theo}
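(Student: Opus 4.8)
The plan is to prove the reverse coarea inequality through a covering argument in the spirit of \cite{quasiPansu}, comparing a suitable packing-type premeasure on $\G$ with the spherical measure of the fibers. The strategy splits into two parts. \textbf{Step 1 (Claim 1): a local comparison.} Fix $\ell \geq 1$ and the associated $N=N(\ell,\G)$ from Remark \ref{rem:Pansu}, and fix a scale parameter $\lambda \in (0,1)$. The idea is to cover $\Omega$ by small balls $B_i=B(x_i,r_i)$ forming an $(N,\ell)$-packing (balls centered on $\Omega$), and to estimate $r_i^Q \, C_P(Df(x_i))$ from above by the $\mathcal{S}^{Q-P}$-measure of the piece of the fiber through $x_i$ sitting inside a slightly enlarged ball. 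Concretely, for each $i$ pick the value $m_i := f(x_i)$; since $Df(x_i)$ is surjective and $f$ is $C^1_\G$, Theorem \ref{uniformPdiff} gives a quantitative comparison between $f(x_i)^{-1}f(y)$ and $Df(x_i)(x_i^{-1}y)$ on $\overline{\Omega}$ with error controlled by $[\omega_{\overline{\Omega_2},DF_1}(cHd_1)]^{1/k^2}$ — this is exactly why the technical geometric constants $c,H$ and the nested open sets $\Omega \Subset \Omega_s \Subset \Omega^s_R \subset \Omega''\Subset\Omega'$ appear in the hypothesis \eqref{ipotesimagnani}. Using this, on the ball $B(x_i,r_i)$ the fiber $f^{-1}(m_i)$ is, up to a controlled distortion, close to the translate of $\ker(Df(x_i))$; the lower Ahlfors regularity hypothesis, applied for a value $m$ close to $m_i$ (this is where "for $\mathcal{S}^P$-a.e.\ $m$" is exploited, via an averaging/Fubini-type step over $m$ in a small ball $B_\M(m_i,\rho)$), then bounds below $\mathcal{S}^{Q-P}(f^{-1}(m)\cap B(x_i, 2r_i))$ by $C(2r_i)^{Q-P}$, for $r_i < \tilde r$. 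Multiplying by the codomain side: integrating over $m \in B_\M(m_i,\rho)$ with $\rho \sim r_i$ produces a factor $\mathcal{S}^P(B_\M(0,1))\rho^P$, and the coarea factor formula \eqref{calcolocoarea} ties $C_P(Df(x_i))$ to $\mathcal{S}^{Q-P}(\ker(Df(x_i))\cap B(0,1))$ and $|Df(x_i)|$, so that altogether
\begin{equation*}
r_i^Q \, C_P(Df(x_i)) \lesssim_{C,\G,p} \int_{B_\M(f(x_i),\rho_i)} \mathcal{S}^{Q-P}(f^{-1}(m)\cap B(x_i,2r_i)) \, d\mathcal{S}^P(m),
\end{equation*}
with $\rho_i$ comparable to $r_i$ up to $\|Df(x_i)\|$, which is bounded on the compact set by Proposition \ref{Remmodcon}. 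Summing over $i$ in the packing, the bounded-overlap property of $(N,\ell)$-packings (with $\ell$ chosen large enough so that the dilated balls $\ell B_i$ are disjoint, hence the images $f(B_i)$ have controlled overlap in $\M$ as well) controls the right-hand side by $N$ times $\int_\M \mathcal{S}^{Q-P}(f^{-1}(m)\cap \Omega_s)\,d\mathcal{S}^P(m)$. Taking the supremum over packings and then $\inf$ over $\delta$, this yields Claim 1:
\begin{equation*}
\tilde{\mathcal{P}}^Q_{N,\ell}\big(x \mapsto C_P(Df(x)),\, \Omega\big) \;\lesssim_{C,\G,p}\; \int_\M \mathcal{S}^{Q-P}(f^{-1}(m)\cap \Omega)\, d\mathcal{S}^P(m),
\end{equation*}
in the sense that the weighted packing premeasure of $\Omega$ with weight $C_P(Df)$ is bounded by the fiber integral.

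\textbf{Step 2: from the packing premeasure back to the spherical integral.} Here I would invoke the comparison \eqref{confronto} of Remark \ref{rem:confronto}, namely $\mathcal{S}^\alpha \leq \mathcal{P}^\alpha_{N,\ell}$, together with the abstract density/differentiation machinery of Theorem \ref{abstractdiff}. The point is that the left-hand side of the desired inequality, $\int_\Omega C_P(Df(x))\,d\mathcal{S}^Q(x)$, is the integral of a bounded nonnegative Borel function (boundedness again from Proposition \ref{Remmodcon} and formula \eqref{calcolocoarea}) against $\mathcal{S}^Q$; define the measure $\mu(A):=\int_A C_P(Df(x))\,d\mathcal{S}^Q(x)$ and observe $\mu \ll \mathcal{S}^Q$. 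One then shows, using a Vitali-type covering lemma adapted to $(N,\ell)$-packings (as in \cite{quasiPansu}), that $\mu(\Omega)$ is controlled by the weighted packing premeasure estimated in Step 1, up to the universal constant relating $\mathcal{S}^Q$ and the packing measure on a Carnot group of homogeneous dimension $Q$. Chaining Steps 1 and 2 gives the claimed bound with a constant $L$ depending only on $C$, $\G$ (through $N,\ell$, the geometric constants $c,H$, and $Z$), and $p$.

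\textbf{Main obstacle.} The crux is Step 1, specifically making the local comparison uniform and quantitative: one must transfer the lower Ahlfors regularity of the fiber $f^{-1}(m)$ — a hypothesis about the \emph{metric} level set — into a lower bound for $\mathcal{S}^{Q-P}$ of the fiber inside a genuine metric ball $B(x_i,2r_i)$, while simultaneously keeping track of how $f$ distorts the ball and how $f^{-1}(m_i)$ differs from the coset of the kernel of the linear map $Df(x_i)$. Theorem \ref{uniformPdiff} provides exactly the uniform estimate needed, but one has to be careful that the error term $[\omega_{\overline{\Omega_2},DF_1}(cHd_1(x,y))]^{1/k^2}$ can be made small on the relevant scales (forcing $r_i < \delta$ with $\delta$ small and invoking the vanishing of the modulus of continuity at $0$), and that the lower regularity radius $\tilde r$ and the regularity constant $C$ are uniform in $m$ — which is precisely what the hypothesis guarantees — so that the constant surviving at the end genuinely depends only on $C$, $\G$ and $p$. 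A secondary technical point is handling the measurability of $m \mapsto \mathcal{S}^{Q-P}(f^{-1}(m)\cap \Omega)$ and the interchange of sum over the packing with integration over $\M$, which is justified by the Remark preceding the statement and by monotone convergence.
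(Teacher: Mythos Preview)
Your outline has the right architecture but contains a genuine gap at the heart of Step~1. The displayed estimate
\[
r_i^{Q}\, C_P(Df(x_i)) \;\lesssim_{C,\G,p}\; \int_{B_{\M}(f(x_i),\rho_i)} \mathcal{S}^{Q-P}\bigl(f^{-1}(m)\cap B(x_i,2r_i)\bigr)\, d\mathcal{S}^P(m)
\]
only follows if you know that $B_{\M}(m_i,\rho_i)\subset f(B(x_i,r_i))$ for some $\rho_i$ with $\rho_i^{P}\gtrsim r_i^{P}\,C_P(Df(x_i))$: otherwise the Ahlfors lower bound cannot be invoked (there is no point of $f^{-1}(m)$ inside $B(x_i,r_i)$ at which to center the ball), and the factor $\rho_i^P$ is too small to absorb $C_P(Df(x_i))$. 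Saying ``$\rho_i$ comparable to $r_i$ up to $\|Df(x_i)\|$'' does not help, because $\|Df\|$ controls only how \emph{large} the image can be, not how large a ball it must contain; what is needed is a quantitative openness statement, essentially $\mathcal{S}^{P}\bigl(f(B(x,r))\bigr)\gtrsim |Df(x)|\,r^{P}$ uniformly for $x\in\Omega$ and small $r$. This is precisely the content of the paper's Claim~3, which occupies the bulk of the proof: it introduces the gauge $\zeta_T(B(x,r))=r^{Q-P}\mathcal{S}^P(f(B(x,r)))$, compares it to $\zeta_R(B(x,r))=|Df(x)|\,r^Q$, and establishes $\zeta_T\gtrsim_{k,p}\zeta_R$ via a compactness/contradiction argument showing that $\delta^2_{1/r}(f(x)^{-1}f(B(x,r)))$ converges to $Df(x)(B(0,1))$ in measure, uniformly in $x$. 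Theorem~\ref{uniformPdiff} is indeed the tool, but the passage from uniform differentiability to this uniform lower measure bound is nontrivial and you have not supplied it.

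A second, smaller issue: you assert that the packing condition on the $\ell B_i$ in $\G$ forces ``the images $f(B_i)$ to have controlled overlap in $\M$''. This is not true in general (disjoint balls in $\G$ can have heavily overlapping images), and the paper does \emph{not} argue this way. Instead, for each fixed $m$ it recenters the balls $B_i$ with $m\in f(B_i)$ at points of $f^{-1}(m)\cap B_i$, obtaining balls $B_{i,m}\subset 3B_i$ that form an $(N,1)$-packing \emph{of the fiber} $f^{-1}(m)$; then the packing premeasure $\mathcal{P}^{Q-P}_{N,1}$ of the fiber appears naturally, and only afterwards (Claim~2) is Ahlfors regularity used to dominate it by $\mathcal{S}^{Q-P}$. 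This fiber-by-fiber recentering is the device that replaces any control of overlaps in $\M$, and it is what makes the abstract scheme of \cite{quasiPansu} go through here.
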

\begin{proof}
In the proof we will denote by $B(x,r)$ the closed metric ball of $\G$ with respect of $d_1$ with center $x$ and radius $r$.
Let us preliminarily introduce for $\delta>0$, and $E \subset \G$
$$\mathcal{T}_{\delta}(E)= \inf \left\{ \sum_{i=1}^{\infty} \zeta_T(B_i) : B_i  \in \mathcal{F}_b, \ E \subseteq \bigcup_{i=1}^{\infty}  B_i, \ r(B_i) \leq \delta \right\}$$
with $\zeta_T(B(x,r))=  r^{Q-P} \mathcal{S}^P(f(B(x,r))$. Define the Carath\'eodory's measure
$$ \mathcal{T}(E):= \sup_{\delta>0}\mathcal{T}_{\delta}(E).$$
We define for any $\delta>0$ and $E \subset \G$
\begin{equation*}
\begin{aligned}
\mathcal{K}_{ N,\ell, \delta}(E)= \sup \Big\{ \sum_{i=1}^{\infty} \zeta_T(B_i) :  \ \{ &B_i \} \ (N, \ell) \mathrm{\text{-}packing  \ of  \ }E , \ E \subseteq \bigcup_{i=1}^{\infty} B_i,\\
 & B_i \mathrm{ \ centered  \ on \  } E, \   \  r(B_i) \leq \delta \Big\}
\end{aligned}
\end{equation*}
and
$$ \mathcal{K}_{ N, \ell}(E):= \inf_{\delta>0}\mathcal{K}_{  N, \ell, \delta}(E)$$
Fix $N=N(3, \G)$ the minimum natural number such that there exist fine $(N,3)$-packings of $\G$ that cover $\G$ itself. \\

\textbf{Claim 1} The following inequalities hold
$$ \mathcal{T}(\Omega)\leq \mathcal{K}_{ N, 3}(\Omega) \leq \int_{\M} \mathcal{P}_{ N, 1}^{Q-P} (f^{-1}(m) \cap \Omega) d \mathcal{S}^{P}(m).$$

Let us start proving the second inequality of the claim; we follow the scheme of \cite[Proposition 4.2]{quasiPansu}. 
Let $\delta>0$ and let $\{B_i \}$ be a $(N, 3)$-packing of $\Omega$ that covers $\Omega$ with $r(B_i) \leq \delta$ and $B_i$ centered on $\Omega$. Consider any ball $B_i$ and choose $m \in f(B_i)$. Pick $x_i \in B_i \cap f^{-1}(m) \cap \Omega$ and call $B_{i,m}$ the smallest ball centered at $x_i$ that contains $B_i$. Observe that $B_{i,m} \subseteq 3B_i$, so that, for each $m \in \M$, the collection $\{B_{i,m} \}_{\{i: m \in f(B_i) \}}$ is a $(N, 1)$-packing of $f^{-1}(m) \cap \Omega$ consisting of balls of radius less or equal than $ 3 \delta$ centered on $f^{-1}(m) \cap \Omega$, that covers $f^{-1}(m) \cap \Omega$. Then
\begin{equation}
\begin{aligned}
 \sum_i r(B_i)^{Q-P} \mathcal{S}^P(f(B_i)) &= \sum _i ( \int_{\M} \bu_{f(B_i)}(m) d \mathcal{S}^P(m)) (r(B_i))^{Q-P}\\
&=  \int_{\M}  \sum_i \bu_{f(B_i)}(m) (r(B_i))^{Q-P} d \mathcal{S}^P(m) \\
&=  \int_{\M} \sum_{ \{ i : m \in f(B_i) \}}  (r(B_i))^{Q-P} d \mathcal{S}^P(m)  \\
& \leq \int_{\M} \sum_{ \{i : \exists B_{i,m} \}}  (r(B_{i,m}))^{Q-P} d \mathcal{S}^P(m) \\
& \leq \int_{\M} \mathcal{P}^{Q-P}_{  N, 1, 3 \delta} (f^{-1}(m) \cap \Omega) d \mathcal{S}^P(m).\\
\end{aligned}
\end{equation}
Hence, letting $\delta$ go to zero, by Monotone Convergence Theorem
$$\mathcal{K}_{ N, 3}(\Omega) \leq \int_{\M} \mathcal{P}_{ N, 1}^{Q-P} (f^{-1}(m) \cap \Omega) d \mathcal{S}^{P}(m).$$
The first inequality of the claim follows analogously to the usual comparison between packing and spherical measures (see Remark \ref{rem:confronto}).\\

\textbf{Claim 2}
There exists a constant $T=T(C, \G)$ such that for $\mathcal{S}^P$-a.e. $ m \in \M$ the following inequality holds
\begin{equation}
 \mathcal{P}_{ N, 1}^{Q-P} (f^{-1}(m) \cap \Omega) \leq T  \mathcal{S}^{Q-P}(f^{-1}(m) \cap \Omega). 
\end{equation}
Let us first observe that surely for any $\delta>0$ and $m \in \M$ it is true that
$$\mathcal{P}_{ N, 1, \delta}^{Q-P} (f^{-1}(m) \cap \Omega) \leq N \tilde{\mathcal{P}}_{ 1, 1, \delta}^{Q-P}(f^{-1}(m) \cap \Omega). $$
We want to exploit the hypothesis about the Ahlfors regularity of the level sets $f^{-1}(m)$, for $\mathcal{S}^P$-a.e. $m \in \M$, therefore we fix $0 < \delta < \tilde{r}$. For any fixed $m \in \M$ for which the hypothesis of Ahlfors-regularity holds we consider a $(1, 1)$-packing $\{B_i \} $ of balls of radius smaller than $\delta$, centered on $f^{-1}(m) \cap \Omega$ such that $$ \sum_i r(B_i)^{Q-P} \geq \frac{1}{2} \tilde{\mathcal{P}}_{ 1, 1, \delta}^{Q-P}(f^{-1}(m) \cap \Omega).$$
Then by the Ahlfors-regularity of $f^{-1}(m)$ we have
\begin{equation}
\begin{aligned}
\mathcal{P}_{ N, 1, \delta}^{Q-P} (f^{-1}(m) \cap \Omega) &\leq N \tilde{\mathcal{P}}_{ 1, 1, \delta}^{Q-P} (f^{-1}(m) \cap \Omega) \\
& \leq 2N \sum_i r(B_i)^{Q-P} \\
& \leq 2 \ \frac{N}{C} \ \mathcal{S}^{Q-P} (f^{-1}(m) \cap B_i ) \\
& = 2 \ \frac{N}{C} \ \mathcal{S}^{Q-P} (f^{-1}(m) \cap B_i \cap \overline{\Omega_{\delta}}) \\
&\leq 2 \ \frac{N}{C} \ \mathcal{S}^{Q-P}(f^{-1}(m) \cap \overline{\Omega_{\delta}}), \\
\end{aligned}
\end{equation}
where $\Omega_{\delta}:=\{ x \in \G : d(x,\Omega) < \delta \}$.

Now we let $\delta$ go to zero. Since $\Omega$ is closed, for every $m \in \M$, $ f^{-1}(m) \cap \overline{\Omega_{\delta}} \searrow f^{-1}(m) \cap \Omega$ as $\delta \to 0$, hence by Monotone convergence theorem, 
for $\mathcal{S}^P$-almost all $m \in \M$, we get
$$\mathcal{P}_{  N, 1}^{Q-P} (f^{-1}(m) \cap \Omega) \leq T \mathcal{S}^{Q-P} (f^{-1}(m) \cap \Omega),$$
where $T=\frac{2N}{C}=\frac{2 N(3,\G)}{C}$, hence $T=T(C, \G)$.\\

By combining Claim 1. and Claim 2. we obtain the existence of a constant $T=T(C, \G)$ such that
$$\mathcal{T}(\Omega)\leq \mathcal{K}_{N, 3}(\Omega) \leq T \int_{\M} \mathcal{S}^{Q-P} (f^{-1}(m) \cap \Omega) d \mathcal{S}^P(m).$$
From now on, we denote by $\delta_t^i$ the intrinsic dilations by $t>0$, on $\G$ for $i=1$ and on $\M$ for $i=2$, respectively.\\

\textbf{Claim 3}
If $k$ is the step of $\G$, 
$$ \mathcal{T}(\Omega) \gtrsim_{k,p}  \int_{\Omega} |Df(x)| d \mathcal{S}^{Q}(x).$$

The proof of Claim 3 is composed of two main steps.

First, we can observe that $Df(x)$ is a continuous function on $\Omega'$, so we can consider the following measure on $\Omega'$: for any $A \subset \Omega'$, $\mu(A):=\int_A |Df(x)| d\mathcal{S}^{Q}(x)$. We want to compare $\mu$ with the Carath\'eodory's measure built with coverings composed of closed balls, measured by the function
$$\zeta_R(B(x,r)):= |Df(x)| r^{Q}.$$ We denote this measure by $\mathcal{R}=\sup_{\delta>0} \phi_{ \delta, \zeta_R}$, where in the definition of $\phi_{\delta, \zeta_R}$ we set $\mathcal{F}=\mathcal{F}_b$.

We want to prove that there exists $\bar{r}>0$ such that for every $0 < r \leq \bar{r}$ and for every $x \in \Omega$, $\mu(B(x,r)) \lesssim   \zeta_R(B(x,r))$. In fact, by \cite[2.10.17(1)]{Federer}, this implies that $\mu(A) \lesssim \mathcal{R}(A)$ for any $A \subseteq \Omega$ (and then also for $A= \Omega$).

Since $\Omega$ is closed and bounded, it is compact.
The function $| Df( \cdot ) | : \overline{\Omega_s} \to \R, \ x \to | Df(x)|$ is a continuous function on a compact set. Let us fix $\epsilon= \min_{x \in \overline{\Omega_s}} |Df(x)|>0$; it is positive since $Df(x)$ is everywhere surjective by hypothesis. Moreover, the map $| Df( \cdot ) |$ is uniformly continuous, then there exists $r'>0$ such that $ | \ |Df(x)|-|Df(y)|  \ | \leq \epsilon$ if $|x-y| \leq r'$, $x,y \in \overline{\Omega_s}$. 

Let us fix $\bar{r} < \min \{ r', s \}$.
Let us fix $x \in \Omega$ and $0<r \leq \bar{r}$ and let us study
\begin{equation}
\begin{aligned}
\frac{\mu(B(x,r))}{|Df(x)| r^Q} = & \frac{1}{r^Q} \int_{B(x,r)} \frac{|Df(y)| }{|Df(x)|}d \mathcal{S}^Q(y)\\ 
 \leq & \frac{1}{r^Q} \int_{B(x,r)} \frac{| \ |Df(y)|-|Df(x)| \ | }{|Df(x)|}d \mathcal{S}^Q(y) \\
&+ \frac{1}{r^Q} \int_{B(x,r)} \frac{|Df(x)| }{|Df(x)|}d \mathcal{S}^Q(y) \\
 \leq & \frac{1}{r^Q} \int_{B(x,r)} \frac{\epsilon}{\min_{x \in \overline{\Omega_s}}|Df(x)|} \mathcal{S}^Q(y)+ \mathcal{S}^Q(B(0,1))\\
 = & 2 \mathcal{S}^Q(B(0,1))=2b,\\
\end{aligned}
\end{equation}
where $0< b= \mathcal{S}^Q(B(0,1))< \infty$, hence for any $x \in \Omega$ and $0<r \leq \bar{r}$, we have 
$$ \frac{\mu(B(x,r))}{|Df(x)| r^Q} \leq 2b$$ and then, as we said above, $\mu(\Omega) \leq 2b  \mathcal{R}(\Omega)$, so $\mu(\Omega) \lesssim \mathcal{R}(\Omega).$\\

In the second part of the proof, we want to compare $\mathcal{T}(\Omega)$ with $\mathcal{R}(\Omega)$, and, in particular, we want to prove that 
\begin{equation}
\label{eqSF} \mathcal{R}(\Omega) \lesssim_{k,p} \mathcal{T}(\Omega).
\end{equation}
As we observed above, the measure $\mathcal{T}$ is defined as a Carath\'eodory's measure defined with coverings of closed balls measured by the function $\zeta_T(B(x,r))= r^{Q-P} \mathcal{S}^P(f(B(x,r))$. The strategy then will rely on the comparison between $ \zeta_T$ and $\zeta_R$.
In particular we fix $h>0$ such that $h<s$ and we want to prove that there exists $\bar{r}>0$ such that for every $0 < r \leq \bar{r}$ for every $x \in \Omega_h:=\{ y \in \G : \mathrm{dist}(y,\Omega)<h \}$, 
\begin{equation}
\label{eqzeta12}
\zeta_T(B(x,r)) \gtrsim_{k,p} \zeta_R(B(x,r)),
\end{equation}
this would give the desired thesis $\eqref{eqSF}$.

Let us first define for any $x \in \Omega_h$ and $r >0$, $$A_{x,r}:= \delta_{\frac{1}{r}}^2(f(x)^{-1} f(B(x,r)) \qquad \mathrm{and} \qquad A_x:= Df(x)(B(0,1)).$$
The proof will be composed of various steps, and it will be useful to give name to the following conditions:
\begin{align}
 &\lim_{r \to 0} \sup_{x \in \Omega_h} \left|  \bu_{A_{x,r}}(m)- \bu_{A_x}(m) \right| =0 \mathrm{ \  for \ any \ } m \in \M;\label{(a)}\\ 
 &\lim_{r \to 0} \sup_{x \in \Omega_h} \left| \mathcal{S}^P(A_{x,r})- \mathcal{S}^P(A_x) \right|.\label{(b)}
\end{align}
Our strategy consists of proving that $\eqref{(a)} \Rightarrow \eqref{(b)} \Rightarrow \eqref{eqzeta12}$, and then we will conclude the proof by proving $\eqref{(a)}$.
Let us start proving that $\eqref{(b)} \Rightarrow \eqref{eqzeta12}$.

Let $x \in \Omega_h$ and denote by $V(x):=(\ker(Df(x))^{\perp}$. For every $r$ small enough
\begin{equation}
\begin{aligned}
\zeta_T(B(x,r))&= r^{Q-P} \mathcal{S}^{P}(B(x,r))\\
&= r^Q \frac{ \mathcal{S}^{P}(f(B(x,r))}{r^P}\\
&=r^Q \frac{ \mathcal{S}^{P}(f(x)^{-1}f(B(x,r))}{r^P}\\
&=r^Q \mathcal{S}^{P}(\delta_{1/r}^2(f(x)^{-1}f(B(x,r))),
\end{aligned}
\end{equation}
hence
$$\frac{\zeta_T(B(x,r))}{\zeta_R(B(x,r))} = \frac{\mathcal{S}^P(\delta_{1/r}^2(f(x)^{-1} f(B(x,r))}{|Df(x)|} .$$
Observe that the map $Df(x)|_{V(x)}:V(x) \to \M$ is injective and surjective and that $|Df(x)|=|Df(x)|_{V(x)}|$, by the choice of $V(x)$.
If $\pi_{V(x)}$ is the orthogonal projection on $V(x)$, for some geometric constant $G$ we have 
$$\mathcal{S}^P(Df(x)(B(0,1)))=G \ \mathcal{L}^p(Df(x)(B(0,1))= G |Df(x)|  \  \mathcal{L}^{p}(\pi_{V(x)}(B(0,1))$$ by the Euclidean area formula (see also \cite[Lemma 9.2]{Magnani2018}, that ensures that one can see any element $y \in \G$ as $y=m_y (\pi_{V(x)}(y))$, with $m_{y} \in \ker(Df(x))$).
Observe that for every $x \in \Omega_h$, $V(x)=(\ker(Df(x))^{\perp}$ is a linear subspace of constant topological dimension $p>0$, since $Df(x)$ is surjective at any point $x \in \Omega_h$. We notice that the factor $\mathcal{L}^p(V(x) \cap B_E(0,1))$ does not depend on $x$. Remember now that by Proposition \ref{normeconfronto} applied to $K=B(0,1)$ there exists a constant $C_{B(0,1)}$ such that
$$ \frac{1}{C_{B(0,1)}} |x| \leq \| x \|_1 \leq C_{B(0,1)} |x|^{\frac{1}{k}},$$
where $k$ is the step of $\G$.
Hence
\begin{equation}
\begin{aligned}
\mathcal{L}^{p}(\pi_{V(x)}(B(0,1)) )&\geq \mathcal{L}^{p}(V(x) \cap B(0,1)) \\
&\geq \mathcal{L}^{p}\left(V(x) \cap B_E\left(0,\frac{1}{(C_{B(0,1)})^k}\right)\right) \\
&= \frac{1}{(C_{B(0,1)})^{kp}} \mathcal{L}^p(V(x) \cap B_E(0,1))\\
&:=D(k,p) >0.\\ 
\end{aligned}
\end{equation}
Hence for every $x \in \Omega_h$, we have
$$\frac{\mathcal{S}^P(A_x)}{|Df(x)|}  \geq  GD(k,p):=D'(k,p)=D' >0.$$

If we now assume $\eqref{(b)}$ to be true, and we fix $\epsilon = \frac{ D'\min_{x \in \overline{\Omega_h}} |Df(x)|}{2}>0$, there exists $0 <\bar{r} \leq s-h$ such that for every $0<r \leq \bar{r}$ and for every $x \in \Omega_h$, 
$$\left| \mathcal{S}^P(A_{x,r})- \mathcal{S}^P(A_x) \right| \leq \sup_{x \in \Omega_h} \left| \mathcal{S}^P(A_{x,r})- \mathcal{S}^P(A_x) \right| \leq \epsilon,$$
so that for every $0<r \leq \bar{r}$ and for every $x \in \Omega_h$, 
$$   \mathcal{S}^P(A_{x,r}) \geq \mathcal{S}^P(A_x) - \epsilon $$
and so for every $x \in \Omega_h$ and $0 < r \leq \bar{r}$
\begin{equation}
\begin{aligned}
 \frac{\zeta_T(B(x,r))}{\zeta_R(B(x,r))} &=  \frac{\mathcal{S}^P(A_{x,r})}{|Df(x)|} \geq \frac{\mathcal{S}^P(A_x)}{|Df(x)|} - \frac{\epsilon}{|Df(x)|}\\ & \geq D'- \frac{\epsilon}{|Df(x)|} \geq D'- \frac{\epsilon}{\min_{x \in \overline{\Omega_h}}|Df(x)|} =\frac{D'}{2}>0
\end{aligned}
\end{equation}
by the choice of $\epsilon$. This gives that $\eqref{(b)} \Rightarrow (\ref{eqzeta12})$.\\

Second point, we prove that $\eqref{(a)} \Rightarrow \eqref{(b)}$. Surely, we know that
\begin{equation}
\label{uptoLebesgue}
\begin{aligned}
&\lim_{r \to 0} \sup_{x \in \Omega_h} \left| \mathcal{S}^P(A_{x,r})- \mathcal{S}^P(A_x) \right|\\ \leq & \lim_{r \to 0} \sup_{x \in \Omega_h} \left| \int_{\M} \bu_{A_{x,r}}(m) - \bu_{A_x}(m) d \mathcal{S}^P(m)\right|\\
\leq & \lim_{r \to 0}  \int_{\M} \sup_{x \in \Omega_h}  \left| \bu_{A_{x,r}}(m) - \bu_{A_x}(m) \right| d \mathcal{S}^P(m).\\
\end{aligned}
\end{equation}
We want now to apply the Lebesgue dominated convergence theorem using $\eqref{(a)}$. In order to do this, we prove that for $r \leq s-h$, for any $m \in \M$
\begin{equation}
\label{equibounded}
 \sup_{x \in \Omega_h}  \left| \bu_{A_{x,r}}(m) - \bu_{A_x}(m) \right| \leq 2 \bu_{B(0,W)}(m) 
\end{equation}
for some constant $W>0$. Notice that $2 \bu_{B(0,W)} \in L^1_{\mathcal{S}^P}(\M).$

First, consider that
$\sup_{x \in \Omega_h}  \left| \bu_{A_{x,r}}(m) - \bu_{A_x}(m) \right| \leq \sup_{x \in \Omega_h}  \left| \bu_{A_{x,r}}(m) \right| + \sup_{x \in \Omega_h} \left| \bu_{A_x}(m) \right|$.

For any $x \in \Omega_h$ and $m \in \M$, if we assume $\bu_{A_x}(m)=1$, it implies that $m = Df(x)(\eta)$ for some $\eta \in B(0,1)$, then $\| m \|_2 = \|Df(x)(\eta) \|_2 \leq \| Df(x) \|_{\mathcal{L}(\G, \M)} \leq \max_{x \in \overline{\Omega_h}} \|Df(x)\|_{\mathcal{L}(\G, \M)} =: \|Df\|_{\overline{\Omega_h}}$.

For any $x \in \Omega_h$, $r \leq s-h$ and $m \in \M$, if $\bu_{A_{x,r}}(m)=1$, $m=\delta_{1/r}^2(f(x)^{-1} f(q_r))$ for some $q_r \in B(x,r) \subseteq \Omega_s$. Hence
\begin{align}
 \|m \|_2 = &\|\delta_{1/r}^2(f(x)^{-1} f(q_r))\|_2 \notag\\
 = &\| Df(x)(\delta_{1/r}^1 (x^{-1}q_r))  \delta_{1/r}^2(Df(x)(x^{-1}q_r))^{-1}f(x)^{-1} f(q_r)) \|_2 \notag \\
 \leq &\| Df(x)(\delta_{1/r}^1 (x^{-1}q_r))  \|_2 + \| \delta_{1/r}^2(Df(x)(x^{-1}q_r))^{-1}f(x)^{-1} f(q_r) \|_2 \\
 \leq &\|Df(x) \|_{\mathcal{L}(\G, \M)} + K(\omega_{\overline{\Omega''}, DF_1}(Hc (s-h)))^{\frac{1}{k^2}}\notag\\
 \leq &\|Df\|_{\overline{\Omega_h}}+ K (\omega_{\overline{\Omega''}, DF_1}(Hc (s-h)))^{\frac{1}{k^2}},\notag
\end{align}
where $\omega_{\overline{\Omega''}, DF_1}$ is the modulus of continuity of $x \to DF_1(x)$ defined in Definition \ref{moduluscont} and $K$ is a constant that plays the role of $C$ of Theorem \ref{uniformPdiff}.

Hence 
\begin{align}
\sup_{x \in \Omega_h}  \left| \bu_{A_{x,r}}(m) - \bu_{A_x}(m) \right| \leq & \sup_{x \in \Omega_h}  \bu_{A_{x,r}}(m) + \sup_{x \in \Omega_h}  \bu_{A_x}(m) \notag\\
\leq & \bu_{B(0,\|Df\|_{\overline{\Omega_h}}+  K(\omega_{\overline{\Omega''}, DF_1}(Hc (s-h)))^{\frac{1}{k^2}})}(m)\notag \\
& +  \bu_{B(0, \|Df\|_{\overline{\Omega_h}}) }(m) \\
 \leq & 2 \bu_{B(0,\|Df\|_{\overline{\Omega_h}}+  K(\omega_{\overline{\Omega''}, DF_1}(Hc (s-h)))^{\frac{1}{k^2}})}(m)\notag
\end{align}
and this implies that (\ref{equibounded}) is true, with $$W=\|Df\|_{\overline{\Omega_h}}+  (\omega_{\overline{\Omega''}, DF_1}(Hc (s-h)))^{\frac{1}{k}}).$$
We can then apply the Lebesgue dominated convergence theorem to (\ref{uptoLebesgue}), and since we have assumed $\eqref{(a)}$ to be true, we obtain $\eqref{(b)}$.\\

It remains to prove $\eqref{(a)}$.

By contradiction, we assume $\eqref{(a)}$ to be false. Then, there exists at least one element $m \in \M$ such that the limit 
$$\lim_{r \to 0}\sup_{x \in \Omega_h} \left|  \bu_{A_{x,r}}(m)- \bu_{A_x}(m) \right| $$
does not exist or
$$\lim_{r \to 0}\sup_{x \in \Omega_h} \left|  \bu_{A_{x,r}}(m)- \bu_{A_x}(m) \right| >0.$$
In both cases, since all the considered elements are positive, there exists at least a positive infinitesimal sequence $r_n$ such that $$\lim_{n \to \infty} \sup_{x \in \Omega_h} \left|  \bu_{A_{x,r_n}}(m)- \bu_{A_x}(m) \right| >0.$$
This implies that there exists $\tilde{n}>0$ such that for every $n \geq \tilde{n}$, $$\sup_{x \in \Omega_h} \left|  \bu_{A_{x,r_n}}(m)- \bu_{A_x}(m) \right| >0 \qquad \mathrm{and} \qquad r_n \leq s-h.$$ Hence for every $n \geq \tilde{n}$ there exists at least an element $x_n \in \Omega_h \subseteq \overline{\Omega_h}$ such that
$$\left|  \bu_{A_{x_n,r_n}}(m)- \bu_{A_{x_n}}(m) \right| >0$$ and then 
\begin{equation}
\label{absurd2}
\left|  \bu_{A_{x_n,r_n}}(m)- \bu_{A_{x_n}}(m) \right| =1.
\end{equation}
Since $\overline{\Omega_h}$ is a compact set, the sequence $x_n$ converges up to a subsequence to some $\bar{x} \in \overline{\Omega_h}$.\\

Let us first prove that there exists some $\bar{n}$ such that for every $n \geq \bar{n}$
\begin{equation}
\label{ipotesi}
\left|  \bu_{A_{x_n,r_n}}(m)- \bu_{A_{\bar{x}}}(m) \right| =1.
\end{equation}
Let us then assume by contradiction that there exists a subsequence $x_{n_k}$ such that
\begin{equation}
\label{absurd}
\lim_{k \to \infty} \left|  \bu_{A_{x_{n_k},r_{n_k}}}(m)- \bu_{A_{\bar{x}}}(m) \right|=0
\end{equation} then, on this subsequence, we have
\begin{equation}
\label{equazionevaazero}
\begin{aligned}
\left|  \bu_{A_{x_{n_k},r_{n_k}}}(m)- \bu_{A_{x_{n_k}}}(m) \right| \leq & \left|  \bu_{A_{x_{n_k},r_{n_k}}}(m)- \bu_{A_{\bar{x}}}(m) \right|\\
&+ \left|  \bu_{A_{x_{n_k}}}(m)- \bu_{A_{\bar{x}}}(m) \right|.
\end{aligned}
\end{equation}
Let us prove that (\ref{equazionevaazero}) goes to zero as $k \to \infty$ and this would give a contradiction with (\ref{absurd2}). The fact that (\ref{equazionevaazero}) goes to zero follows by the assumption (\ref{absurd}) and by the fact that 
\begin{equation}
\label{daprovare}
 \left|  \bu_{A_{x_{n_k}}}(m)- \bu_{A_{\bar{x}}}(m) \right| \to 0  \ \mathrm{as} \ k \to \infty.\end{equation}
Let us prove (\ref{daprovare}): assume by contradiction that on a subsequence of $ x_{n_k}$, $x_{n_{k_p}}$, for $p$ sufficiently large, 
\begin{equation}
\label{absurd3} \left|  \bu_{A_{x_{n_{k_p}}}}(m)- \bu_{A_{\bar{x}}}(m) \right| =1.
\end{equation}
Let us assume $m \notin A_{\bar{x}}$, then $m \in A_{x_{n_{k_p}}}$ so that $m= Df(x_{n_{k_p}})(\eta_{n_{k_p}})$ for $\eta_{n_{k_p}} \in B(0,1)$, hence we can extract a converging subsequence such that $\eta_{n_{k_{p_q}}} \to \bar{\eta} \in B(0,1)$ as $q \to \infty$ and letting $q$ go to infinity, by the continuity of the differential, we know that $m= Df(\bar{x})(\bar{\eta}) \in A_{\bar{x}}$, but this is not possible. 

So it must be true that $m \in A_{\bar{x}}$. Hence, by the definition of $A_{\bar{x}}$ and the continuity of the Pansu differential, it is true that for some $\eta \in B(0,1)$, $m= Df(\bar{x})(\eta)=\lim_{p \to \infty} Df(x_{n_{k_p}})(\eta)=\lim_{p \to \infty} X_p$ where for every $p$, $X_p:=Df(x_{n_{k_p}})(\eta) \in A_{x_{n_{k_p}}}$. Hence $m \in \limsup_{p \to \infty} A_{x_{n_{k_p}}}$, and then $ \bu_{ \limsup_{p \to \infty} A_{x_{n_{k_p}}}}(m)= \limsup_{p \to \infty} \bu_{A_{x_{n_{k_p}}}}(m)=1$ but at the same time, by (\ref{absurd3}), $m \notin A_{x_{n_{k_p}}}$ for $p$ sufficiently large and this implies that $$\lim_{p \to \infty} \bu_{A_{x_{n_{k_p}}}}(m)=0 = \limsup_{p \to \infty} \bu_{A_{x_{n_{k_p}}}}(m)$$and this gives a contradiction.
Then (\ref{ipotesi}) is proved.\\ Let us continue from (\ref{ipotesi}). We need to prove that (\ref{absurd2}) is not possible. Since $m$ is fixed, there are only two possibilities: 
\begin{align}
m \in A_{\bar{x}}; \label{(a1)}\\
m \notin A_{\bar{x}}. \label{(a2)}
\end{align}
We show that neither $\eqref{(a1)}$ nor $\eqref{(a2)}$ can be true.
Assume that $\eqref{(a1)}$ is true, then $m=Df(\bar{x})(\eta)$ for some $\eta \in B(0,1)$. Then, by (\ref{ipotesi})
$m \notin A_{x_n, r_n}$ for $n \geq \bar{n}$ and so clearly there exists the following limit 
\begin{equation}
\label{ipotesi2}
 \lim_{n \to \infty} \bu_{A_{x_n,r_n}}(m)=0.
\end{equation}
Let us define for any $n$, $q_n:= x_n \delta_{r_n}^1(\eta) \in B(x_n, r_n) \subseteq \Omega_s$.
Consider for any $n \geq \bar{n}$
\begin{equation*}
\begin{aligned}
 \delta_{1/r_n}^2(f(x_n)^{-1} f(q_n)) = & Df(\bar{x})(\eta) \delta_{1/r_n}^2(Df(\bar{x})(x_n^{-1} q_n )^{-1}Df(x_n)(x_n^{-1} q_n ))\\
 &\delta_{1/r_n}^2(Df(x_n)(x_n^{-1} q_n )^{-1} f(x_n)^{-1} f(q_n))
\end{aligned}
\end{equation*}
and observe that by  Theorem \ref{uniformPdiff}, since $x_n, q_n \in \Omega_s$ and (\ref{ipotesimagnani}) holds
$$ \| \delta_{1/r_n}^2(Df(x_n)(x_n^{-1} q_n )^{-1}f(x_n)^{-1} f(q_n)) \|_2 \leq K (\omega_{\overline{\Omega''},DF_1}(cH r_n))^{\frac{1}{k^2}} \to 0 $$ as $n \to \infty$ and 
$$ \| (Df(\bar{x})(\eta))^{-1}Df(x_n)(\eta) \|_2 \leq d_{\mathcal{L}(\G, \M)}(Df(x_n), Df(\bar{x})) \to 0$$ as $n \to \infty$ by the continuity of $Df(x)$.
Hence $ \lim_{n \to \infty} \delta_{1/r_n}^2(f(x_n)^{-1} f(q_n))=m$. This permits to conclude that $m \in \limsup_{n \to \infty} A_{x_n, r_n}$ and so that 
$$\limsup_{n \to \infty} \bu_{A_{x_n,r_n}}(m)=\bu_{\limsup_{n \to \infty} A_{x_n, r_n}}(m)=1.$$
At the same time, (\ref{ipotesi2}) implies that there exists the limit 
$$ \limsup_{n \to \infty} \bu_{A_{x_n,r_n}}(m)= \lim_{n \to \infty} \bu_{A_{x_n,r_n}}(m)=0,$$
so we reach a contradiction.\\

Assume now $\eqref{(a2)}$, then $m \in A_{x_n, r_n}$ for every $n \geq \bar{n}$ and then by (\ref{ipotesi}), $m \notin A_{\bar{x}}$. For every $n \geq \bar{n}$ there exists $q_n \in B(x_n, r_n) \subseteq \Omega_s$ such that 
\begin{equation}
\begin{aligned}
m= &\delta_{1/r_n}^2(f(x_n)^{-1} f(q_n))\\
= & Df(\bar{x})(\delta_{1/r_n}^1 (x_n^{-1}q_n)) Df(\bar{x})(\delta_{1/r_n}^1 (x_n^{-1}q_n))^{-1}\\
&Df(x_n)(\delta_{1/r_n}^1 (x_n^{-1}q_n)) \delta_{1/r_n}^2(Df(x_n)(x_n^{-1} q_n )^{-1}f(x_n)^{-1} f(q_n)) 
\end{aligned}
\end{equation}
 and again by Theorem \ref{uniformPdiff} and by the continuity of $Df$ we obtain that
$$m= \lim_{n \to \infty} \delta_{1/r_n}^2(f(x_n)^{-1} f(q_n))= \lim_{n \to \infty} Df(\bar{x})(\delta_{1/r_n}^1 (x_n^{-1}q_n))$$  that up to a subsequence is equal to $Df(\bar{x})(\eta)$ for some $\eta \in B(0,1)$, so that $m \in A_{\bar{x}}$ that is a contradiction with (\ref{absurd2}).
Hence, finally $\eqref{(a)}$ is proved and this concludes the proof of Claim 3.\\

\textbf{Claim 4} For every $x \in \Omega'$, 
$$C_P(Df(x))\lesssim_{q,p,k} \ |Df(x)|.$$

Since we have assumed that $Df(x)$ is surjective at every $x$, by (\ref{calcolocoarea}), we have
$$C_P(Df(x))= Z \frac{\mathcal{S}^{Q-P}(\ker(Df(x)) \cap B(0,1))}{\mathcal{H}_E^{q-p}(\ker(Df(x)) \cap B(0,1))} |Df(x)|. $$

By Proposition \ref{areasubgroup}, for any $x \in \Omega'$ and any Borel set $B \subset \ker(Df(x))$
$$ \mathcal{S}^{Q-P} (B)= \frac{1}{\sup_{ w \in B(0,1) } \mathcal{H}^{q-p}_E(\ker (Df(x)) \cap B(w,1))} \mathcal{H}^{q-p}_E(B),$$
hence, by taking into account Proposition \ref{normeconfronto}, we get
\begin{equation}
\begin{aligned}
&\frac{\mathcal{S}^{Q-P}(\ker(Df(x)) \cap B(0,1))}{\mathcal{H}_E^{q-p}(\ker(Df(x)) \cap B(0,1))} \\ =& \ \frac{1}{ \sup_{ w \in B(0,1)} \mathcal{H}_E^{q-p}(  B(w,1) \cap  \ker (Df(x)) } \\
\leq & \ \frac{1}{\mathcal{H}^{q-p}_E(B(0,1) \cap \ker (Df(x))}\\
 \leq & \ \frac{1}{\mathcal{H}^{q-p}_E(B_E(0,\frac{1}{(C_{B(0,1)})^{k}})) \cap \ker (Df(x))} \\
=& \ \frac{1}{\mathcal{L}^{q-p}(B_E(0,\frac{1}{(C_{B(0,1)})^{k}}) \cap \ker (Df(x) )}=:D''(q,p,k)>0.
\end{aligned}
\end{equation}
In the last passage we considered that $\ker (Df(x))$ is a linear subspace of constant topological dimension $q-p$. \\
By combining all the claims the proof is finally achieved.
\end{proof}
It is easy to extend Theorem \ref{maintheorem2} to the case in which $\Omega$ is not necessarily compact but it is any measurable set.
\begin{theo}\label{theoremmeasurable}
Let $(\G,d_1)$, $(\M, d_2)$ be two Carnot groups, endowed with homogeneous distances, of metric dimension $Q, \ P$ and topological dimension $q, \ p$, respectively. Let $f \in C^1_{\G}(\G, \M)$ be a function and assume $Df(x)$ to be surjective at any point $x \in \G$. Let $A \subset \G$ be a measurable set. 
Assume that there exist two constants $\tilde{r}, C>0$ such that for $\mathcal{S}^P$-a.e. $m \in \M$ the level set $  f^{-1}(m) $ is $\tilde{r}$-locally 
$C$-lower Ahlfors $(Q-P)$-regular with respect to the measure $\mathcal{S}^{Q-P}$.
Then there exists a constant $L=L(C, \G ,p )$ such that
$$\int_{A} C_P(Df(x)) d \mathcal{S}^{Q}(x) \leq L \int_{\M} \mathcal{S}^{Q-P}(f^{-1}(m) \cap A) \ d \mathcal{S}^P(m).$$
\end{theo}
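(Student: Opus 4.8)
The plan is to obtain Theorem \ref{theoremmeasurable} from Theorem \ref{maintheorem2} by a standard inner-regularity argument, so that essentially no new analysis is required. The point is that the left-hand side is the total mass $\nu(A)$ of the Borel measure $\nu(E):=\int_E C_P(Df(x))\,d\mathcal{S}^Q(x)$, and this measure turns out to be inner regular by compact sets.

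First I would check that $\nu$ is a Radon measure on $\G$. The key input is Claim 4 of Theorem \ref{maintheorem}, which gives $C_P(Df(x))\lesssim_{q,p,k}|Df(x)|$ for every $x\in\G$; since $Df$ is continuous on $\G$, the function $x\mapsto|Df(x)|$ is continuous, hence locally bounded, and therefore so is $x\mapsto C_P(Df(x))$. As $\mathcal{S}^Q$ coincides with $\mathcal{L}^q$ up to a constant and is in particular finite on compact sets, $\nu$ is a Borel measure on $\G\cong\R^q$ that is finite on compact sets; on such a space every such measure is Radon and hence inner regular on all Borel sets, i.e.\ $\nu(A)=\sup\{\nu(K):K\subseteq A,\ K\ \text{compact}\}$ for every measurable $A$. (Measurability of $m\mapsto\mathcal{S}^{Q-P}(f^{-1}(m)\cap A)$, needed for the right-hand side to make sense, is guaranteed as in the remark preceding Theorem \ref{maintheorem}, since a continuously Pansu differentiable map is locally Lipschitz.)

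Next, for an arbitrary compact $K\subseteq A$ I would apply Theorem \ref{maintheorem2} with $\Omega=K$: all its hypotheses ($K$ closed and bounded, $f\in C^1_\G(\G,\M)$, $Df$ everywhere surjective, and the uniform local lower Ahlfors regularity of the level sets) are exactly those assumed here, so, with $L=L(C,\G,p)$ the constant of Theorem \ref{maintheorem2},
\[
\nu(K)\ \leq\ L\int_{\M}\mathcal{S}^{Q-P}(f^{-1}(m)\cap K)\,d\mathcal{S}^P(m)\ \leq\ L\int_{\M}\mathcal{S}^{Q-P}(f^{-1}(m)\cap A)\,d\mathcal{S}^P(m),
\]
where the second inequality is monotonicity of $\mathcal{S}^{Q-P}$ under $f^{-1}(m)\cap K\subseteq f^{-1}(m)\cap A$. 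Since the right-hand side does not depend on $K$, taking the supremum over all compact $K\subseteq A$ and invoking the inner regularity of $\nu$ yields
\[
\int_A C_P(Df(x))\,d\mathcal{S}^Q(x)=\nu(A)=\sup_{K\subseteq A\ \text{compact}}\nu(K)\ \leq\ L\int_{\M}\mathcal{S}^{Q-P}(f^{-1}(m)\cap A)\,d\mathcal{S}^P(m),
\]
which is the assertion, and this remains valid when the left-hand side is infinite.

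I do not expect a genuine obstacle here: the only two points needing attention are that $\nu$ be finite on compact sets — for which Claim 4 is precisely what is required — and the case $\nu(A)=+\infty$, which is automatically absorbed by passing to the supremum over compact subsets. A fully equivalent alternative would be to write $A=\bigcup_n\bigl(A\cap B(0,n)\bigr)$, approximate each bounded piece from the inside by compact sets, apply Theorem \ref{maintheorem2} to each, and then let the pieces exhaust $A$, using the monotone convergence theorem on both sides of the inequality.
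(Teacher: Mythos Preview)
Your proposal is correct and follows essentially the same approach as the paper: approximate the measurable set $A$ from the inside by compact subsets, apply Theorem \ref{maintheorem2} to each, and pass to the limit. The paper phrases this via an increasing sequence of compact sets $\Omega_n\subseteq A$ together with the Monotone Convergence Theorem, while you phrase it via inner regularity of the Radon measure $\nu(E)=\int_E C_P(Df(x))\,d\mathcal{S}^Q(x)$; these are two equivalent packagings of the same inner-approximation argument.
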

\begin{proof}
Let us consider an increasing sequence of compact sets in $\Omega_n \subseteq A$ such that $\Omega_n \nearrow A$. Hence by Theorem \ref{maintheorem2}, there exists $L=L(C,\G,p)$ such that for every $n \in \N$
\begin{align*}
\int_{\Omega_n} C_P(Df(x)) d \mathcal{S}^{Q}(x) &\leq L \int_{\M} \mathcal{S}^{Q-P}(f^{-1}(m) \cap \Omega_n) \ d \mathcal{S}^P(m) \\ &\leq L \int_{\M} \mathcal{S}^{Q-P}(f^{-1}(m) \cap A) \ d \mathcal{S}^P(m),
\end{align*}
so if we let $n$ go to $\infty$, by Monotone Convergence Theorem we get the thesis.
\end{proof}
\section{Applications}
\begin{coro}
In the hypotheses of Theorem \ref{theoremmeasurable},
let $u: A \to \R$ be a non-negative measurable function, then there exists a constant $L=L(C, \G ,p )$ such that
$$\int_{A} u(x) C_P(Df(x)) d \mathcal{S}^{Q}(x) \leq L \int_{\M} \int_{f^{-1}(m) \cap A} u(x) d\mathcal{S}^{Q-P}(x)  d \mathcal{S}^P(m).$$
\end{coro}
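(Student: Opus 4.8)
The plan is to reduce to Theorem \ref{theoremmeasurable} by the standard approximation of a non-negative measurable function with an increasing sequence of simple functions, using crucially that the constant $L$ produced in Theorem \ref{theoremmeasurable} depends only on $C$, $\G$ and $p$, and in particular \emph{not} on the measurable set to which the theorem is applied.

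\textbf{Step 1 (characteristic functions).} First I would treat the case $u = \bu_E$ for a measurable set $E \subseteq A$. Since $E$ is itself a measurable subset of $\G$ and the hypotheses on the level sets of $f$ do not involve the domain set, Theorem \ref{theoremmeasurable} applies to $E$ and yields
\begin{equation*}
\int_A \bu_E(x)\, C_P(Df(x))\, d\mathcal{S}^Q(x) = \int_E C_P(Df(x))\, d\mathcal{S}^Q(x) \leq L \int_{\M} \mathcal{S}^{Q-P}(f^{-1}(m) \cap E)\, d\mathcal{S}^P(m),
\end{equation*}
and the right-hand side equals $L \int_{\M} \int_{f^{-1}(m) \cap A} \bu_E(x)\, d\mathcal{S}^{Q-P}(x)\, d\mathcal{S}^P(m)$, with the same $L = L(C,\G,p)$.

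\textbf{Step 2 (simple functions).} By linearity of the integral against $\mathcal{S}^Q$ on the left, and of both iterated integrals on the right, the inequality extends, with the same $L$, to every non-negative simple function $u = \sum_{i=1}^{n} a_i \bu_{E_i}$, with $a_i \geq 0$ and $E_i \subseteq A$ measurable. In particular, for such $u$ the map $m \mapsto \int_{f^{-1}(m) \cap A} u(x)\, d\mathcal{S}^{Q-P}(x)$ is $\mathcal{S}^P$-measurable, each summand $m \mapsto \mathcal{S}^{Q-P}(f^{-1}(m)\cap E_i)$ being $\mathcal{S}^P$-measurable by the Remark preceding Theorem \ref{maintheorem}.

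\textbf{Step 3 (general $u$).} For a non-negative measurable $u : A \to \R$, choose non-negative simple functions $u_n \nearrow u$ pointwise on $A$. For each fixed $m$, the Monotone Convergence Theorem gives $\int_{f^{-1}(m) \cap A} u_n\, d\mathcal{S}^{Q-P} \nearrow \int_{f^{-1}(m) \cap A} u\, d\mathcal{S}^{Q-P}$, so the limit function of $m$ is $\mathcal{S}^P$-measurable as an increasing limit of measurable functions, and a further application of the Monotone Convergence Theorem gives
\begin{equation*}
\int_{\M} \int_{f^{-1}(m) \cap A} u_n(x)\, d\mathcal{S}^{Q-P}(x)\, d\mathcal{S}^P(m) \nearrow \int_{\M} \int_{f^{-1}(m) \cap A} u(x)\, d\mathcal{S}^{Q-P}(x)\, d\mathcal{S}^P(m).
\end{equation*}
On the left-hand side, $\int_A u_n\, C_P(Df)\, d\mathcal{S}^Q \nearrow \int_A u\, C_P(Df)\, d\mathcal{S}^Q$ again by the Monotone Convergence Theorem. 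Passing to the limit in the inequality of Step 2 for the $u_n$ proves the claim. I do not expect a genuine obstacle in this argument beyond the measurability bookkeeping for the iterated integral, which is exactly what Step 2 together with the Remark before Theorem \ref{maintheorem} takes care of; the key conceptual point, already in hand, is the set-independence of the constant $L$.
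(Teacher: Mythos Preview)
Your proposal is correct and follows essentially the same approach as the paper: both reduce to Theorem \ref{theoremmeasurable} applied to measurable subsets (exploiting that $L$ is set-independent) and then pass to general $u$ by monotone convergence. The only cosmetic difference is that the paper uses the single countable decomposition $u=\sum_{k=1}^\infty \tfrac{1}{k}\bu_{A_k}$ from \cite{EvansGariepy} in place of your increasing sequence of simple functions.
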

\begin{proof}
We can write $u= \sum_{k=1}^{\infty} \frac{1}{k} 1_{A_k}$ with $A_k$ measurable sets (see \cite[Theorem 7]{EvansGariepy}). By Monotone Convergence Theorem we have
\begin{align}
\label{passaggi}
\int_{A} u(x) C_P(Df(x)) d \mathcal{S}^{Q}(x) &= \sum_{k=1}^{\infty}  \frac{1}{k} \int_{A \cap A_k}  C_P(Df(x)) d \mathcal{S}^{Q}(x) \notag \\
& \leq \sum_{k=1}^{\infty}  \frac{1}{k} L \int_{\M} \mathcal{S}^{Q-P}(f^{-1}(m) \cap A \cap A_k) d \mathcal{S}^P(m) \notag \\
& \leq \sum_{k=1}^{\infty}  \frac{1}{k} L \int_{\M} \int_{f^{-1}(m) \cap A} \bu_{A_k}(x) d\mathcal{S}^{Q-P}(x) d \mathcal{S}^P(m) \\
&=   L \int_{\M} \int_{f^{-1}(m) \cap A } \sum_{k=1}^{\infty}  \frac{1}{k} 1_{A_k}(x) d \mathcal{S}^{Q-P}(x) d \mathcal{S}^P(m) \notag \\
& = L \int_{\M} \int_{f^{-1}(m) \cap A} u(x) d\mathcal{S}^{Q-P}(x) \ d \mathcal{S}^P(m)\notag .
\end{align}
\end{proof}
\begin{coro}
\label{corfunzione}
In the hypotheses of Theorem \ref{theoremmeasurable},
let $u: A \to \R$ be a measurable function. If we assume that
\begin{itemize}
\item $u$ is $\mathcal{S}^{Q-P}$-summable on $f^{-1}(m) \cap A$ for $\mathcal{S}^{P}$-a.e. $m \in \M$ \item $\int_{\M} \int_{f^{-1}(m) \cap A} |u(x)| d \mathcal{S}^{Q-P}(x) d \mathcal{S}^{P}(m) < \infty$
\end{itemize}
then $u$ is summable on $A$.
\end{coro}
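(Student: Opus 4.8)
The plan is to derive Corollary \ref{corfunzione} from the previous corollary by the standard decomposition of a signed function into its positive and negative parts. First I would write $u = u^+ - u^-$, where $u^+ = \max\{u,0\}$ and $u^- = \max\{-u,0\}$ are both non-negative measurable functions on $A$, so that $|u| = u^+ + u^-$. Since each of $u^+$ and $u^-$ is non-negative and measurable, the previous corollary applies to each of them separately, giving
\begin{equation*}
\int_{A} u^{\pm}(x) C_P(Df(x)) \, d\mathcal{S}^{Q}(x) \leq L \int_{\M} \int_{f^{-1}(m) \cap A} u^{\pm}(x) \, d\mathcal{S}^{Q-P}(x) \, d\mathcal{S}^{P}(m).
\end{equation*}

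Next I would add the two inequalities, observing that $u^+ + u^- = |u|$, to obtain
\begin{equation*}
\int_{A} |u(x)| C_P(Df(x)) \, d\mathcal{S}^{Q}(x) \leq L \int_{\M} \int_{f^{-1}(m) \cap A} |u(x)| \, d\mathcal{S}^{Q-P}(x) \, d\mathcal{S}^{P}(m).
\end{equation*}
By the second hypothesis, the right-hand side is finite, so the left-hand side is finite as well. It remains to pass from finiteness of $\int_A |u| \, C_P(Df)$ to finiteness of $\int_A |u| \, d\mathcal{S}^Q$, i.e. to summability of $u$ on $A$ in the ordinary sense. This is where the surjectivity assumption enters: since $Df(x)$ is surjective at every point, the coarea factor $C_P(Df(x))$ is strictly positive everywhere, but to get a uniform lower bound on a given set one should restrict to a bounded piece.

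Concretely, I would exhaust $A$ by an increasing sequence of bounded measurable sets $A_n \nearrow A$ (for instance $A_n = A \cap B(0,n)$). On the closure $\overline{A_n}$, which is compact, the map $x \mapsto C_P(Df(x))$ is continuous — by \eqref{calcolocoarea} it equals $Z \, \frac{\mathcal{S}^{Q-P}(\ker(Df(x)) \cap B(0,1))}{\mathcal{H}_E^{q-p}(\ker(Df(x)) \cap B(0,1))} |Df(x)|$, and both $|Df(x)|$ and the kernel-ratio factor vary continuously with $Df(x)$ (the latter being bounded below by the positive constant $D''(q,p,k)$ of Claim 4 and above similarly) — hence it attains a positive minimum $c_n > 0$ on $\overline{A_n}$. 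Therefore
\begin{equation*}
c_n \int_{A_n} |u(x)| \, d\mathcal{S}^{Q}(x) \leq \int_{A_n} |u(x)| \, C_P(Df(x)) \, d\mathcal{S}^{Q}(x) \leq L \int_{\M} \int_{f^{-1}(m) \cap A} |u(x)| \, d\mathcal{S}^{Q-P}(x) \, d\mathcal{S}^{P}(m) < \infty,
\end{equation*}
so $\int_{A_n} |u| \, d\mathcal{S}^Q < \infty$ for each $n$. Letting $n \to \infty$ and applying the Monotone Convergence Theorem to $|u| \, \bu_{A_n} \nearrow |u| \, \bu_A$ gives $\int_A |u| \, d\mathcal{S}^Q < \infty$, which is the definition of $u$ being summable on $A$.

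I expect the only genuine subtlety to be this last passage: the main inequality controls $\int |u| \, C_P(Df)$ rather than $\int |u| \, d\mathcal{S}^Q$, and because $C_P(Df(x))$ need not be bounded below by a single positive constant on all of $A$ (only pointwise positive), one must localise to bounded sets where continuity of $x \mapsto C_P(Df(x))$ yields a positive minimum, and then recombine via monotone convergence. Everything else — the $u = u^+ - u^-$ split and the application of the preceding corollary — is routine.
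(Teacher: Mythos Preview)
Your reduction to $\int_A |u(x)|\, C_P(Df(x))\, d\mathcal{S}^Q(x) < \infty$ is correct and in fact streamlines the paper's argument: the paper applies the preceding corollary only to $u^+$ and invokes Theorem~\ref{coareamagnani} (Magnani's opposite inequality) for $u^-$, but applying the preceding corollary directly to $|u|$, as you effectively do, reaches the same weighted bound more quickly.

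The genuine gap is in your final passage from $\int_A |u|\, C_P(Df)\, d\mathcal{S}^Q < \infty$ to $\int_A |u|\, d\mathcal{S}^Q < \infty$. Your exhaustion gives only $\int_{A_n} |u|\, d\mathcal{S}^Q \le M/c_n$, where $M$ is the fixed right-hand side and $c_n = \min_{\overline{A_n}} C_P(Df)$. Since $|Df(x)|$---and hence $C_P(Df(x))$---need not be bounded below on all of $\G$, the constants $c_n$ may tend to $0$ as $n\to\infty$; monotone convergence then tells you $\int_{A_n}|u| \to \int_A |u|$ but gives no finite bound on the limit. (Think of $u\equiv 1$ with a weight tending to $0$ at infinity.) The paper does not attempt any such exhaustion: after the weighted estimate it simply records that $C_P(Df(x)) > 0$ for every $x$, proving in \eqref{coareafactormaggiorezero} a uniform positive lower bound on the kernel-ratio factor, and stops there. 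If ``summable on $A$'' is to be read with respect to the natural weighted measure $C_P(Df)\, d\mathcal{S}^Q$ that appears throughout the coarea statements, your first inequality already finishes the proof and the exhaustion is superfluous; if it is meant in the unweighted sense, neither your argument nor the paper's closes the gap without an additional lower bound on $|Df|$ over $A$.
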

\begin{proof}
We can write $u=u^+-u^-$.
By proceeding analogously to (\ref{passaggi}), considering Theorem \ref{coareamagnani} instead of Theorem \ref{maintheorem2}, we then obtain that 
\begin{equation}
\label{seconda} - \int_{A} u^-(x) C_P(Df(x)) d \mathcal{S}^{Q}(x) \leq - \int_{\M} \int_{f^{-1}(m) \cap A} u^-(x) d\mathcal{S}^{Q-P}(x)  d \mathcal{S}^P(m).
\end{equation}
Hence by (\ref{passaggi}) applied to $u^+$, (\ref{seconda}), and our hypothesis, we have
\begin{align}
  \int_A u(x) C_P(Df(x)) d \mathcal{S}^Q(x) = & \int_A u^+(x) C_P(Df(x)) d \mathcal{S}^Q(x)\notag\\
  & - \int_{A} u^-(x) C_P(Df(x)) d \mathcal{S}^{Q}(x)\notag \\
\leq & L \int_{\M} \int_{f^{-1}(m) \cap A} u^+(x) d\mathcal{S}^{Q-P}(x)  d \mathcal{S}^P(m)\\
&- \int_{\M} \int_{f^{-1}(m) \cap A} u^-(x) d\mathcal{S}^{Q-P}(x)  d \mathcal{S}^P(m) \notag\\
 \leq & L \int_{\M} \int_{f^{-1}(m) \cap A} |u(x)| d\mathcal{S}^{Q-P}(x)  d \mathcal{S}^P(m)  < \infty. \notag
\end{align}
Now, it is enough to prove that $C_P(Df(x))>0$, for every $ x \in A$. This follows from the facts that $|Df(x)|>0$ for every $x \in A $ and that, taking into consideration Proposition \ref{normeconfronto}, for every $x$ we have the following
\begin{equation}
\label{coareafactormaggiorezero}
\begin{aligned}
&\frac{\mathcal{S}^{Q-P}(\ker(Df(x)) \cap B(0,1))}{\mathcal{H}_E^{q-p}(\ker(Df(x)) \cap B(0,1))} \\ =& \ \frac{1}{ \sup_{ w \in B(0,1)} \mathcal{H}_E^{q-p}(  B(w,1) \cap  \ker (Df(x))) } \\
\geq &  \ \frac{1}{ \mathcal{H}_E^{q-p}(  B(0,2) \cap  \ker (Df(x))) } \\
= &  \ \frac{1}{ \mathcal{L}^{q-p}(  B(0,2) \cap  \ker (Df(x))) } \\
\geq &  \ \frac{1}{ \mathcal{L}^{q-p}(  B_E(0, 2 C_{B(0,2)}) \cap  \ker (Df(x))) } \\
= & \frac{1}{(2C_{B(0,2)})^{q-p}}>0.
\end{aligned}
\end{equation}
\end{proof}
\begin{coro}
\label{corsommabile}
In the hypotheses of Theorem \ref{theoremmeasurable}, if $\bu_{A}(x)=0$ for $\mathcal{S}^{Q-P}$-a.e. $x \in f^{-1}(m)$, for $\mathcal{S}^{P}$-a.e. $m \in \M$, then $\bu_{A}(x)=0$ for $\mathcal{S}^Q$-a.e. $x \in \G$.
\end{coro}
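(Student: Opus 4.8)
The plan is to feed the set $A$ directly into the measurable-set version of the coarea-type inequality, Theorem \ref{theoremmeasurable}, and then exploit the strict positivity of the coarea factor. First I would observe that the hypothesis ``$\bu_A(x)=0$ for $\mathcal{S}^{Q-P}$-a.e. $x\in f^{-1}(m)$, for $\mathcal{S}^P$-a.e. $m\in\M$'' says precisely that $\mathcal{S}^{Q-P}(f^{-1}(m)\cap A)=0$ for $\mathcal{S}^P$-a.e. $m\in\M$, since the exceptional set on $f^{-1}(m)$ is exactly $f^{-1}(m)\cap A$. Because $m\mapsto\mathcal{S}^{Q-P}(f^{-1}(m)\cap A)$ is $\mathcal{S}^P$-measurable (as recalled in the remark preceding Theorem \ref{maintheorem}), it follows that $\int_{\M}\mathcal{S}^{Q-P}(f^{-1}(m)\cap A)\,d\mathcal{S}^P(m)=0$, hence by Theorem \ref{theoremmeasurable}
\[
\int_A C_P(Df(x))\,d\mathcal{S}^Q(x)\le L\cdot 0=0 .
\]

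Next I would invoke the pointwise lower bound on the coarea factor. By \eqref{calcolocoarea},
\[
C_P(Df(x))=Z\,\frac{\mathcal{S}^{Q-P}(\ker(Df(x))\cap B(0,1))}{\mathcal{H}_E^{q-p}(\ker(Df(x))\cap B(0,1))}\,|Df(x)|,
\]
and the same computation as in the proof of Corollary \ref{corfunzione} (see \eqref{coareafactormaggiorezero}), which relies on Proposition \ref{normeconfronto}, yields a strictly positive lower bound for the quotient of measures; moreover $|Df(x)|>0$ for every $x\in\G$ because $Df(x)$ is assumed surjective at every point. Hence $C_P(Df(x))>0$ for every $x\in\G$.

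Combining the two facts, the $\mathcal{S}^Q$-integral over $A$ of a function that is strictly positive at every point is zero, which forces $\mathcal{S}^Q(A)=0$; equivalently $\bu_A(x)=0$ for $\mathcal{S}^Q$-a.e. $x\in\G$. There is essentially no obstacle: the statement is a direct corollary of Theorem \ref{theoremmeasurable}, the only point deserving care being that the strict positivity $C_P(Df(x))>0$ must hold at \emph{every} point of $\G$, which is precisely why the everywhere-surjectivity of $Df$ (a standing assumption here) is used, rather than mere surjectivity $\mathcal{S}^Q$-almost everywhere.
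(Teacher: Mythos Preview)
Your proposal is correct and follows exactly the paper's approach: the paper's proof is the one-line ``It follows by Theorem \ref{theoremmeasurable} and (\ref{coareafactormaggiorezero})'', and you have simply unpacked those two ingredients --- vanishing of the right-hand side of the coarea-type inequality and strict positivity of $C_P(Df(x))$ --- in full detail.
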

\begin{proof}
It follows by Theorem \ref{theoremmeasurable} and (\ref{coareafactormaggiorezero}).
\end{proof}

\begin{rema} 
The hypothesis of Theorem \ref{maintheorem2} about the uniform Ahlfors regularity of the level sets of the map $f$ is not pointless: if we consider $f$ continuously Pansu differentiable with Pansu differential everywhere surjective on $\G$, the lower Ahlfors regularity of level sets is not always guaranteed, even locally. One can refer to \cite[Corollary 6.2.4]{Artem}, where explicit examples of this phenomenon are presented. It is still not known if pathological level sets are exceptional.
In addition, in \cite{Artem}, a class of mappings of higher regularity from the Heisenberg group $\H^n$ to the Euclidean space $\R^{2n}$ is studied. More precisely, the author considers functions $f \in C^{1, \alpha}_{\G}(\H^n,\R^{2n})$ with $\alpha>0$ i.e. given a homogeneous distance $d$ on $\H^n$, continuously Pansu differentiable maps such that, for every $a,b \in \H^n$
$$ d_{\mathcal{L}(\H^n, \R^{2n})}(Df(a), Df(b)) \lesssim d(a,b)^{\alpha}.$$
By \cite[Corollary 5.5.6]{Artem}, if we assume that the Pansu differential of $f$ is everywhere surjective, the level sets of $f$ are uniformly locally Ahlfors $2$-regular with respect to $\mathcal{S}^2$. Therefore, the validity of the inequality of Theorem \ref{maintheorem2} for this class of regular functions is ensured by our result. This confirms
the coarea-type equality proved in \cite[Theorem 6.2.5]{Artem}. 
To summarize, in this setting, we have weakened the hypothesis adopted in \cite[Theorem 6.2.5]{Artem} about the required regularity of the considered map, passing from $C^{1, \alpha}_{\G}$-regular maps, with $\alpha>0$, to continuously Pansu differentiable functions. We compensate the lower regularity of the map with the more geometrical hypothesis about the Ahlfors regularity of its level sets. We need to remark that these considerations are limited, up to now, to maps from the Heisenberg group $\H^n$ to $\R^{2n}$, about which more results are available.
\end{rema}

Let us now recall the definition of complementary subgroups of a Carnot group $\G$.
\begin{defi}
\label{complementary}
Let $\G$ be a Carnot group. Two homogeneous subgroups $\W$, $\V$ are said \textit{complementary subgroups of $\G$} if
 $\W \cap \V = \{0 \}$ and
for every $ g \in \G$, there exist $w \in \W$, $v \in \V$ such that:
$ g= w   v$, i.e. $ \G= \W   \V.$
\end{defi}
We denote by $\pi_{\W}: \G \to \W$ and $\pi_{\V}: \G \to \V$ the group projections on the subgroups: if $g=wv$ with $w \in \W$, $v \in \V$, $\pi_{\W}(g)=w$ and $\pi_{\V}(g)=v$.\\

Now we see how to apply Theorem \ref{maintheorem2} to the particular geometrical case in which there exists a $p$-dimensional homogeneous subgroup $\V$ complementary to $\ker(Df(x))$ for any point $x$ of a neighbourhood of a fixed compact set $\Omega$.
The proof of this observation uses tools of the theory of intrinsic Lipschitz graphs (\cite{IntLipgraphs}), so
we recall some definitions and results about splitting a Carnot groups into the product of complementary subgroups and about regularity of maps acting between homogeneous subgroups (for more details, please refer to \cite[Section 4]{Notesserra}).

\begin{prop}{\cite[Proposition 2.12]{IntLipgraphs}}
\label{propczero}
If $\G=\W\V$ is the product of two complementary subgroups, there exists $c_0=c_0(\W, \V)>0$ such that
\begin{equation}
\label{eqczero}
c_0 ( \| w \|+ \|v \|) \leq \|wv\| \leq \|w \|+ \|v \|
\end{equation}
for all $w \in \W$, $v \in \V$.
\end{prop}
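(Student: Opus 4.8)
The plan is to prove the two inequalities separately: the right-hand one is essentially the triangle inequality, while the left-hand one is the substantive part and rests on a homogeneity-plus-compactness argument that is exactly where the complementarity hypothesis is used.

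For the upper bound I would simply use left invariance of $d$ together with the triangle inequality: since $d(wv,w)=d(w\cdot v,w\cdot 0)=d(v,0)$, one gets
$$\|wv\|=d(wv,0)\le d(wv,w)+d(w,0)=d(v,0)+d(w,0)=\|w\|+\|v\|.$$
For the lower bound, the idea is that both sides of the desired inequality scale the same way under the dilations. Since $\W$ and $\V$ are homogeneous subgroups, $\delta^{}_{t}w\in\W$ and $\delta^{}_{t}v\in\V$ for every $t>0$; since each $\delta^{}_{t}$ is a group automorphism of $\G$, $(\delta^{}_{t}w)(\delta^{}_{t}v)=\delta^{}_{t}(wv)$; and since $d$ is $1$-homogeneous, $\|\delta^{}_{t}x\|=t\|x\|$. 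Hence both $(w,v)\mapsto\|w\|+\|v\|$ and $(w,v)\mapsto\|wv\|$ are positively $1$-homogeneous on $\W\times\V$ under the simultaneous action $(w,v)\mapsto(\delta^{}_{t}w,\delta^{}_{t}v)$. Therefore it suffices to produce a constant $c_{0}>0$ with $\|wv\|\ge c_{0}$ on the ``sphere'' $\Sigma:=\{(w,v)\in\W\times\V:\|w\|+\|v\|=1\}$: given such a $c_{0}$ and an arbitrary nonzero pair $(w,v)$, one sets $t:=\|w\|+\|v\|$, applies the bound to $(\delta^{}_{1/t}w,\delta^{}_{1/t}v)\in\Sigma$, and multiplies through by $t$, using $(\delta^{}_{1/t}w)(\delta^{}_{1/t}v)=\delta^{}_{1/t}(wv)$; the case $w=v=0$ is trivial.

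To obtain $c_{0}$ I would show that $\Sigma$ is compact and that the continuous function $(w,v)\mapsto\|wv\|$ is strictly positive on it. For compactness: $\W$ and $\V$ are closed subsets of $\G$ (being connected Lie subgroups of a simply connected nilpotent group, equivalently images under $\exp$ of homogeneous subalgebras), so $\W\times\V$ is closed in $\G\times\G$; and $\Sigma$ is a closed subset of $B(0,1)\times B(0,1)$, which is compact because closed balls for a homogeneous distance on a Carnot group are compact. The group law is polynomial in the fixed coordinates, so $(w,v)\mapsto wv$ is continuous, and $\|\cdot\|$ is continuous; hence $(w,v)\mapsto\|wv\|$ attains a minimum $c_{0}\ge 0$ on $\Sigma$. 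Strict positivity is where complementarity enters: if $\|wv\|=0$ for some $(w,v)\in\Sigma$, then $wv=0$, so $w=v^{-1}\in\V$, hence $w\in\W\cap\V=\{0\}$, forcing $w=v=0$ and contradicting $\|w\|+\|v\|=1$; thus $c_{0}>0$, and it depends only on $\W$, $\V$ and the ambient distance.

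The main obstacle is really just this strict positivity of the minimum, which is the sole place the hypothesis $\W\cap\V=\{0\}$ is needed and which encodes that the splitting is non-degenerate; everything else is bookkeeping about continuity, $1$-homogeneity, and the properness of homogeneous distances. I would also remark that the decomposition $\G=\W\V$ itself is not used for the lower bound — only the disjointness $\W\cap\V=\{0\}$ — although it is of course available from the hypothesis.
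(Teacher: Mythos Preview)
Your argument is correct: the upper bound is indeed just the triangle inequality plus left invariance, and the lower bound follows cleanly from the homogeneity-plus-compactness reduction you describe, with $\W\cap\V=\{0\}$ guaranteeing strict positivity of the minimum on $\Sigma$. Note, however, that the present paper does not supply a proof of this proposition at all: it is quoted verbatim from \cite[Proposition 2.12]{IntLipgraphs} and used as a black box. Your proof is in fact the standard one (and essentially the one given in that reference), so there is nothing to compare beyond observing that you have reproduced the expected argument.
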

\begin{defi}
\label{intgraph}
Let $\G=\W  \mathbb{V}$ be the product of two complementary subgroups and let $ U \subset\W$ be a set. If we consider a map $\phi: U \to \mathbb{V}$,
we define its \textit{intrinsic graph} as the set
$$ \mathrm{graph}(\phi)=\{w\phi(w):  w \in U \}.$$
The map $\Phi: U \to \mathrm{graph}(\phi), \ \Phi(w):= w  \phi(w)$ is called the \emph{graph map} of $\phi$.
\end{defi}

\begin{defi}
Let $\G=\W \V$ be the product of two complementary subgroups and let $L$ be a constant. Let $U \subset \W$ be open, we say that a function $\phi:U \to \V$ is \textit{intrinsic $L$-Lipschitz} if 
\begin{equation}
\label{defintlip}
 \| \pi_{\V}( \Phi(w')^{-1} \Phi(w) ) \| \leq L \| \pi_{\W}( \Phi(w')^{-1} \Phi(w) ) \|
\end{equation}
for every $w ,w' \in U$.
\end{defi}

Intrinsic Lipschitz graphs are lower Ahlfors regular.
\begin{prop}{\cite[Theorem 3.9]{IntLipgraphs}}
\label{intLipAhlf}
Let $\G=\W\V$ be the product of two complementary subgroups endowed with a homogeneous distance $d$. Let $N$ be the metric dimension of $\W$. If $\phi: \W \to \V$ is an intrinsic $L$-Lipschitz map on $\W$, then 
$$\left(\frac{c_0(\W,\V)}{1+L} \right)^{N} r^{N} \leq \mathcal{S}^{N}(\mathrm{graph}(\phi) \cap B(x,r))$$
for all $x \in \mathrm{graph}(\phi)$ and $r>0$.
\end{prop}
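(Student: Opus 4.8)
The strategy is to transfer the Ahlfors lower bound from $\W$, on which $\mathcal{S}^{N}\llcorner\W$ is a Haar measure (cf.\ the proof of Proposition \ref{areasubgroup}) and hence satisfies $\mathcal{S}^{N}(B(0,\rho)\cap\W)=\rho^{N}\,\mathcal{S}^{N}(B(0,1)\cap\W)$ by homogeneity of the dilations, to $\mathrm{graph}(\phi)$, using the graph map $\Phi$ of Definition \ref{intgraph} (recall $\pi_{\W}\circ\Phi=\mathrm{id}_{\W}$). First I would reduce to the case $x=0$, $\phi(0)=0$. By left invariance of $\mathcal{S}^{N}$ one has $\mathcal{S}^{N}(\mathrm{graph}(\phi)\cap B(x,r))=\mathcal{S}^{N}\big(x^{-1}\mathrm{graph}(\phi)\cap B(0,r)\big)$, and for $x\in\mathrm{graph}(\phi)$ the set $x^{-1}\mathrm{graph}(\phi)$ is again an intrinsic graph over $\W$ of a map which is intrinsic $L$-Lipschitz with the same constant and vanishes at $0$; this is the usual behaviour of intrinsic Lipschitz graphs under left translation by one of their points and follows from the translation invariance of the conditions \eqref{defintlip} written through $\pi_{\W}$ and $\pi_{\V}$.

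Second, assuming $\phi(0)=0$, apply \eqref{defintlip} to the pair $0,w$, so that $\Phi(0)^{-1}\Phi(w)=w\,\phi(w)$ with $\pi_{\W}$-component $w$ and $\pi_{\V}$-component $\phi(w)$; this gives $\|\phi(w)\|\le L\|w\|$, and then the right-hand inequality in Proposition \ref{propczero} yields
$$\|\Phi(w)\|=\|w\,\phi(w)\|\le\|w\|+\|\phi(w)\|\le(1+L)\|w\|.$$
Consequently $\Phi\big(B(0,r/(1+L))\cap\W\big)\subseteq\mathrm{graph}(\phi)\cap B(0,r)$, so the problem is reduced to bounding $\mathcal{S}^{N}$ of this image from below by a constant times $r^{N}$.

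Third, and this is the technical core, I would prove the one-sided measure comparison: for every closed ball $B=B(y,\rho)$ of $\G$ meeting $\mathrm{graph}(\phi)$,
$$\mathcal{S}^{N}\big(\pi_{\W}(\mathrm{graph}(\phi)\cap B)\big)\ \lesssim\ \big(\rho/c_{0}(\W,\V)\big)^{N}.$$
Picking $p\in\mathrm{graph}(\phi)\cap B$ one has $\mathrm{graph}(\phi)\cap B\subseteq\mathrm{graph}(\phi)\cap B(p,2\rho)$, and for a graph point $z$ in the latter set, writing $p^{-1}z=\hat w\hat v$ with $\hat w=\pi_{\W}(p^{-1}z)$, $\hat v=\pi_{\V}(p^{-1}z)$, condition \eqref{defintlip} for the pair $p,z$ gives $\|\hat v\|\le L\|\hat w\|$ while the left-hand inequality in Proposition \ref{propczero} gives $\|\hat w\|\le c_{0}^{-1}\|p^{-1}z\|=c_{0}^{-1}d(p,z)\le 2\rho/c_{0}$; hence $\pi_{\W}\big(p^{-1}(\mathrm{graph}(\phi)\cap B(p,2\rho))\big)\subseteq B(0,2\rho/c_{0})\cap\W$. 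To transfer this back through the left translation by $p$, note that $\pi_{\W}(w\,t)=w\,\pi_{\W}(t)$ for $w\in\W$ and $\pi_{\W}(t\,v)=\pi_{\W}(t)$ for $v\in\V$, so the map $\pi_{\W}(z)\mapsto\pi_{\W}(p\,z)$ equals a left translation of $\W$ composed with $w\mapsto\pi_{\W}(\pi_{\V}(p)\,w)$; the latter map is $\mathcal{S}^{N}\llcorner\W$-preserving because the left action of $\V\subset\G$ on $\G/\V\cong\W$ has a quotient-invariant measure (both $\G$ and $\V$ being unimodular), which is necessarily proportional to the Haar measure of $\W$. Therefore $\mathcal{S}^{N}\big(\pi_{\W}(\mathrm{graph}(\phi)\cap B)\big)\le\mathcal{S}^{N}\big(B(0,2\rho/c_{0})\cap\W\big)=(2\rho/c_{0})^{N}\mathcal{S}^{N}(B(0,1)\cap\W)$.

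To conclude, for each $\delta>0$ every covering $\{B(y_{j},r_{j})\}$ of $\mathrm{graph}(\phi)\cap B(0,r)$ by balls of radius $\le\delta$ produces, via $\pi_{\W}$, a covering of $\pi_{\W}\big(\Phi(B(0,r/(1+L))\cap\W)\big)=B(0,r/(1+L))\cap\W$ by the sets $\pi_{\W}(\mathrm{graph}(\phi)\cap B(y_{j},r_{j}))$, so by the third step and the homogeneity of $\mathcal{S}^{N}\llcorner\W$ one gets $\big(r/(1+L)\big)^{N}\,\mathcal{S}^{N}(B(0,1)\cap\W)\le\sum_{j}\mathcal{S}^{N}\big(\pi_{\W}(\mathrm{graph}(\phi)\cap B(y_{j},r_{j}))\big)\lesssim c_{0}^{-N}\,\mathcal{S}^{N}(B(0,1)\cap\W)\sum_{j}r_{j}^{N}$, whence $\sum_{j}r_{j}^{N}\gtrsim (c_{0}/(1+L))^{N}r^{N}$; taking the infimum over such coverings and the supremum over $\delta$ gives the asserted lower bound, the sharp constant $\big(c_{0}(\W,\V)/(1+L)\big)^{N}$ following after a slightly more careful bookkeeping of the covering argument (e.g.\ passing to coverings by balls centered on the graph, which changes $\mathcal{S}^{N}$ only by a harmless factor). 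The main obstacle is precisely this third step: the inclusion of the second step is easy and one-sided, whereas controlling $\mathcal{S}^{N}$ of the $\W$-projection of a piece of the graph requires both the two-sided estimate \eqref{eqczero} of Proposition \ref{propczero} — the quantitative form of the cone property of intrinsic Lipschitz graphs — and the behaviour of the Haar measure of $\W$ under the projection $\pi_{\W}$, which is not a homomorphism, when composed with left translations of $\G$.
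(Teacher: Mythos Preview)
The paper does not prove Proposition~\ref{intLipAhlf}: it is quoted verbatim from \cite[Theorem~3.9]{IntLipgraphs} and no argument is supplied here, so there is nothing in this paper to compare your proposal against.

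As a standalone argument your scheme is sound and would be accepted as a proof of the qualitative statement. The reduction to $x=0$, $\phi(0)=0$ is standard; the inclusion $\Phi\big(B(0,r/(1+L))\cap\W\big)\subseteq\mathrm{graph}(\phi)\cap B(0,r)$ is correct; and your covering argument in the conclusion is legitimate because $\mathcal{S}^{N}$ is an outer measure, so subadditivity over the projected pieces is available. The delicate point you correctly isolate is that $\pi_{\W}$ is not a homomorphism, so one cannot simply say that $\pi_{\W}$ is Lipschitz on the graph with respect to the ambient metric; your detour through the $\G$-invariant measure on the homogeneous space $\G/\V\cong\W$ (using that nilpotent groups are unimodular) is a clean way to establish that $w\mapsto\pi_{\W}(v_p w)$ preserves $\mathcal{S}^{N}\llcorner\W$, and hence that $\mathcal{S}^{N}\big(\pi_{\W}(\mathrm{graph}(\phi)\cap B)\big)$ is controlled by $(2\rho/c_0)^{N}$ times a fixed constant.

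There is one genuine loose end: your argument produces the constant $\big(c_0/(2(1+L))\big)^{N}$, not the stated $\big(c_0/(1+L)\big)^{N}$, because of the doubling $B(y,\rho)\subseteq B(p,2\rho)$ when recentring on the graph. Your proposed fix---restricting to coverings by balls centred on the graph---goes the wrong way: the centred spherical measure dominates the uncentred one, so a lower bound for centred coverings does not imply the same lower bound for $\mathcal{S}^{N}$. To recover the sharp constant one has to avoid the recentring altogether, for instance by working directly with the inclusion $\pi_{\W}\big(\mathrm{graph}(\phi)\cap B(y,\rho)\big)\subseteq \pi_{\W}(y)\cdot\big(B(0,\rho/c_0)\cap\W\big)$, which follows from \eqref{eqczero} applied to $y^{-1}z$ (no Lipschitz condition needed at this step) together with the same invariance of $\mathcal{S}^{N}\llcorner\W$ under $w\mapsto\pi_{\W}(gw)$. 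This removes the factor $2$ and gives exactly $\big(c_0/(1+L)\big)^{N}$.
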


\begin{rema}
The proof of Proposition \ref{intLipAhlf} is based on local arguments. Let us now assume that $\phi$ is defined from an open set $A \subset \W $ to $\V$ and set $w \in A$. Assume that, for a positive constant $\tilde{r}>0$, $\pi_{\W}(B(\Phi(w), \tilde{r}))\subset  A$, then for any $0<r < \tilde{r}$ we get that $\mathcal{S}^{N}(\mathrm{graph}(\phi) \cap B(\Phi(w),r)) \geq \left(\frac{c_0(\W,\V)}{1+L} \right)^{N} r^{N}.$ 
\end{rema}

Let us introduce a particular class of h-homomorphism between two Carnot groups, for more details please refer to \cite[Definition 2.5, Proposition 7.10]{Magnani2008}.
\begin{defi}
Given two Carnot groups $\G$ and $\M$. If a map $L: \G \to \M$ is a h-homomorphism and $\K$ is its kernel, we call $L$ a \textit{h-epimorphism} if $L$ is surjective and there exists a homogeneous subgroup of $\G$, $\H $, complementary to $\K$. In this case the restriction $L |_{\H}$ is a h-isomorphism.
\end{defi}

Now we report a result contained in \cite[Lemma 2.10]{AntoineSebastiano}, that is, up to now, the most general available implicit function theorem in this setting. Similar statements are proved in \cite[Theorem 1.4]{Magnani_2013} and \cite[Theorem 3.27]{Areaformula}.
\begin{theo}
\label{IFT}
Let $\G$ and $\M$ be two Carnot groups and let $\Omega \subset \G$ be open. Let $f \in C^1_{\G}(\Omega, \M)$ be a function and fix $x_0 \in \Omega.$ Assume that $Df(x_0)$ is a h-epimorphism and consider $\V$ a subgroup complementary to $\ker(Df(x_0))$. Fix a homogeneous subgroup $\W$ complementary to $\V$. Write $x_0=w_0 v_0$ with respect to the splitting $\W \V$.

Then there exist an open set $A \subset \W$, with $w_0 \in A$, and a continuous map $\phi: A \to \V$ such that $f(a \phi(a))=f(x_0)$ for every $a \in A$.
\end{theo}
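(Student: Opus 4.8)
The plan is to parametrize $f^{-1}(f(x_0))$ near $x_0$ by solving $f(av)=f(x_0)$ for $v\in\V$ in terms of $a\in\W$, via a Newton-type iteration whose convergence is driven by the uniform Pansu differentiability of Theorem~\ref{uniformPdiff}. Set $m:=f(x_0)$ and $L:=Df(x_0)$. Since $L$ is a h-epimorphism and $\V$ is complementary to $\ker L$, the restriction $L|_{\V}\colon\V\to\M$ is a h-isomorphism; let $\ell:=(L|_{\V})^{-1}\colon\M\to\V$, which is again a h-isomorphism, so by homogeneity there is $c_\ell>0$ with $c_\ell\|\xi\|_2\le\|\ell(\xi)\|_1\le c_\ell^{-1}\|\xi\|_2$ for all $\xi\in\M$. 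First I would fix $\rho>0$ with $\overline{B(x_0,\rho)}\subset\Omega$, apply Theorem~\ref{uniformPdiff} on suitable nested neighbourhoods of $\overline{B(x_0,\rho)}$ to obtain a modulus $\omega(t)=C[\omega_{\overline{\Omega_2},DF_1}(cHt)]^{1/k^2}$ with $\omega(t)\to0$ as $t\to0$ ($c,H$ as in Remark~\ref{constants}) such that $d_2\big(f(x)^{-1}f(y),Df(x)(x^{-1}y)\big)\le\omega(d_1(x,y))\,d_1(x,y)$ for $x,y\in\overline{B(x_0,\rho)}$, and then — shrinking $\rho$ using the continuity of $Df$ — also arrange that $\varepsilon:=\sup_{B(x_0,\rho)}d_{\mathcal{L}(\G,\M)}(Df(\cdot),L)$ is as small as I please. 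For $a\in\W$ near $w_0$, starting from the $\V$-component $v_0$ of $x_0$ I iterate
$$v_{n+1}:=v_n\cdot\ell\!\big(f(av_n)^{-1}m\big),$$
so that a fixed point $v$ satisfies $f(av)=m$; I write $\xi_n:=f(av_n)^{-1}m\in\M$ for the defect, noting $v_n^{-1}v_{n+1}=\ell(\xi_n)$, $d_1(av_{n-1},av_n)=\|\ell(\xi_{n-1})\|_1$, and $\|\xi_n\|_2=d_2\big(f(av_{n-1})^{-1}f(av_n),\xi_{n-1}\big)$ by left invariance of $d_2$.

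The heart of the argument — and the step I expect to be the main obstacle — is the geometric decay $\|\xi_n\|_2\le\tfrac12\|\xi_{n-1}\|_2$. To get it I would estimate, by the triangle inequality for $d_2$,
$$d_2\big(f(av_{n-1})^{-1}f(av_n),\xi_{n-1}\big)\le d_2\big(f(av_{n-1})^{-1}f(av_n),\,Df(av_{n-1})(\ell(\xi_{n-1}))\big)+d_2\big(Df(av_{n-1})(\ell(\xi_{n-1})),\,\xi_{n-1}\big).$$
The first term is bounded, via Theorem~\ref{uniformPdiff} applied with $x=av_{n-1}$, $y=av_n$ and $(av_{n-1})^{-1}(av_n)=v_{n-1}^{-1}v_n=\ell(\xi_{n-1})$, by $\omega(c_\ell^{-1}\|\xi_{n-1}\|_2)\,c_\ell^{-1}\|\xi_{n-1}\|_2$; the second, using $L(\ell(\xi_{n-1}))=\xi_{n-1}$ and the homogeneity estimate $d_2(T(\eta),S(\eta))\le\|\eta\|_1\,d_{\mathcal{L}(\G,\M)}(T,S)$, by $\varepsilon\,c_\ell^{-1}\|\xi_{n-1}\|_2$. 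Hence $\|\xi_n\|_2\le c_\ell^{-1}\big(\omega(c_\ell^{-1}\|\xi_{n-1}\|_2)+\varepsilon\big)\|\xi_{n-1}\|_2$, which is $\le\tfrac12\|\xi_{n-1}\|_2$ once $\varepsilon\le c_\ell/4$ and $\|\xi_{n-1}\|_2$ is small enough that $\omega(c_\ell^{-1}\|\xi_{n-1}\|_2)\le c_\ell/4$. The delicacy is bookkeeping: this estimate is only available while $av_n\in\overline{B(x_0,\rho)}$, and one closes the induction by noting $d_1(av_n,av_0)=d_1(v_n,v_0)\le\sum_{j<n}\|\ell(\xi_j)\|_1\le c_\ell^{-1}\sum_{j<n}\|\xi_j\|_2\le 2c_\ell^{-1}\|\xi_0\|_2$, so it suffices that $\|\xi_0\|_2$ be small and $av_0$ close to $x_0$, i.e. that $a$ be close to $w_0$.

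To finish, I would take $A:=\{a\in\W:d_1(a,w_0)<\eta\}$, which is open in $\W$ and contains $w_0$, with $\eta$ so small that for all $a\in A$ one has $d_1(av_0,x_0)<\rho/2$ and $\|\xi_0\|_2=d_2(f(av_0),m)<c_\ell\rho/4$ (possible by continuity of $f$ and of the group product, since $av_0\to w_0v_0=x_0$ as $a\to w_0$). Then the induction runs for every $a\in A$, the sequence $\{v_n\}$ is $d_1$-Cauchy because $\sum_n d_1(v_n,v_{n+1})\le 2c_\ell^{-1}\|\xi_0\|_2<\infty$, and it converges (the space is complete) to a point $\phi(a)\in\V$ with $\|f(a\phi(a))^{-1}m\|_2=\lim_n\|\xi_n\|_2=0$, that is $f(a\phi(a))=f(x_0)$.

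It remains to check that $\phi$ is continuous. The convergence $v_n\to\phi(a)$ is uniform on $A$, since $d_1(v_n(a),\phi(a))\le\sum_{j\ge n}d_1(v_j(a),v_{j+1}(a))\le 2c_\ell^{-1}2^{-n}\sup_{a\in A}\|\xi_0(a)\|_2$ and the supremum is finite; on the other hand each map $a\mapsto v_n(a)$ is continuous by induction, the iteration $v_{n+1}(a)=v_n(a)\cdot\ell\big(f(av_n(a))^{-1}m\big)$ being a composition of the continuous maps $a\mapsto av_n(a)$, $f$, $\ell$ and group multiplication. A uniform limit of continuous maps is continuous, so $\phi\colon A\to\V$ is continuous, and the proof is complete.
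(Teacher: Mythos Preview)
The paper does not actually prove Theorem~\ref{IFT}: it is quoted as a known result, with the proof deferred to \cite[Lemma 2.10]{AntoineSebastiano} (and to the related statements \cite[Theorem 1.4]{Magnani_2013}, \cite[Theorem 3.27]{Areaformula}). There is therefore no in-paper proof to compare against.

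Your proposal is a correct, self-contained argument. The Newton-type iteration $v_{n+1}=v_n\cdot\ell(f(av_n)^{-1}m)$, driven by the uniform Pansu differentiability estimate of Theorem~\ref{uniformPdiff} together with the continuity of $Df$, is the standard mechanism behind implicit function theorems in this setting, and is essentially the approach used in \cite{Magnani_2013} and \cite{AntoineSebastiano}. The key contraction estimate, the bookkeeping that keeps the iterates inside $\overline{B(x_0,\rho)}$, the fact that $\V$ is a closed subgroup so the Cauchy sequence converges in $\V$, and the continuity of $\phi$ via uniform convergence of the $v_n(a)$ are all handled correctly. One cosmetic point: when you invoke Theorem~\ref{uniformPdiff}, make explicit that choosing $\rho$ small enough guarantees the nesting hypothesis $\{x:d(x,\Omega_1)\le cH\,\mathrm{diam}(\Omega_1)\}\subset\Omega_2\Subset\Omega$; this is routine but worth one sentence.
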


\begin{rema}
By \cite[Corollary 2.16]{AntoineSebastiano}, the parametrization $\phi$ given by Theorem \ref{IFT} is $L$-Lipschitz for some positive constant $L$.
\end{rema}

Let us fix again $(\G,d_1)$, $(\M, d_2)$ two Carnot groups, endowed with homogeneous distances, of metric dimension $Q, \ P$ and topological dimension $q, \ p$, respectively. For any set $\Omega \subset \G$, and any real number $D>0$ we set $\Omega_D=\{ y \in \G : d(y,\Omega) < D \}.$

By modifying the proof of \cite[Lemma 2.9]{AntoineSebastiano}, combining it with an easy compactness argument and Theorem \ref{uniformPdiff}, the following immediately follows.
\begin{prop}
\label{propdelC}
Let us consider a map $f \in C^1_{\G}(\G, \M)$ and a compact set $\Omega \subset \G$. Assume that there exists a $p$-dimensional homogeneous subgroup $\V$ such that $Df(x)|_{\V}:\V \to \M$ is a h-isomorphism for every $x \in \Omega$. Then there exists a constant $R>0$ such that for every $x \in \Omega$, for every $y \in B(x,R)$ and $v \in \V$ such that $yv \in B(x,R)$
$$ d_2(f(y),f(yv)) \geq R \| v \|_1.$$
\end{prop}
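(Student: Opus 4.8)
The plan is to reduce Proposition \ref{propdelC} to \cite[Lemma 2.9]{AntoineSebastiano}, which provides exactly this kind of lower bound at a single point $x_0$ once $Df(x_0)|_{\V}$ is a $h$-isomorphism, and then upgrade the pointwise statement to a uniform one on the compact set $\Omega$ by a compactness/contradiction argument, using the uniform Pansu differentiability of Theorem \ref{uniformPdiff} to control the error terms uniformly. First I would recall that, since $\V$ is a fixed $p$-dimensional homogeneous subgroup and $Df(x)|_{\V}:\V\to\M$ is a $h$-isomorphism for every $x\in\Omega$, the quantity $\inf_{v\in\V,\ \|v\|_1=1} \|Df(x)(v)\|_2$ is positive for each $x$, and by continuity of $x\mapsto Df(x)$ (Proposition \ref{Remmodcon} and the continuity of $Df$) and compactness of $\Omega$ it is bounded below by some $c_\V>0$ uniformly in $x\in\Omega$. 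This gives the ``linearized'' version of the estimate: $\|Df(x)(v)\|_2 \ge c_\V\|v\|_1$ for all $x\in\Omega$, $v\in\V$.

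Next I would fix an open set $\Omega''$ compactly contained in $\G$ with $\Omega$ in its interior, satisfying the inclusion hypothesis \eqref{ipotesimagnani} of Theorem \ref{uniformPdiff} (with $\Omega_1$ a suitable neighbourhood of $\Omega$ and $\Omega_2=\Omega''$), so that there is a constant $K$ and a modulus of continuity $\omega:=\omega_{\overline{\Omega''},DF_1}$ with
$$ d_2\big(f(y)^{-1}f(yv),\, Df(y)(v)\big) \le K\, d_1(y,yv)\,[\omega(cH\, d_1(y,yv))]^{1/k^2} = K\,\|v\|_1\,[\omega(cH\|v\|_1)]^{1/k^2}$$
for all $y,yv$ in that neighbourhood. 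Then for $x\in\Omega$, $y\in B(x,R)$, $v\in\V$ with $yv\in B(x,R)$ (so $\|v\|_1 \le 2R$), I would estimate, using the triangle inequality and left-invariance of $d_2$,
\begin{align*}
d_2(f(y),f(yv)) &= \|f(y)^{-1}f(yv)\|_2 \ge \|Df(y)(v)\|_2 - d_2\big(f(y)^{-1}f(yv),\,Df(y)(v)\big)\\
&\ge c_\V\|v\|_1 - K\,\|v\|_1\,[\omega(2cHR)]^{1/k^2} = \|v\|_1\big(c_\V - K[\omega(2cHR)]^{1/k^2}\big).
\end{align*}
Since $\omega(t)\to 0$ as $t\to 0$, one can choose $R>0$ small enough that $K[\omega(2cHR)]^{1/k^2} \le c_\V/2$ and also $R \le c_\V/2$; then $d_2(f(y),f(yv)) \ge (c_\V/2)\|v\|_1 \ge R\|v\|_1$, which is the claimed inequality. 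This $R$ depends only on $\G$, $\M$, the distances, $\V$ and $f$ (through $c_\V$, $K$ and $\omega$), hence is a legitimate uniform constant.

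The main obstacle I anticipate is making the error estimate genuinely uniform over all $x\in\Omega$ and all admissible $y$ simultaneously: one needs the single constant $K$ and the single modulus of continuity $\omega$ coming from Theorem \ref{uniformPdiff} applied on a fixed neighbourhood $\overline{\Omega''}$ of $\Omega$, rather than point-dependent data, and one must check that for $R$ small all the points $y, yv$ involved stay inside the region where that theorem applies — this is where the geometric constants $c,H$ and the inclusion \eqref{ipotesimagnani} enter, and where Proposition \ref{normeconfronto} is used to pass between the homogeneous norm $\|\cdot\|_1$ and control of Euclidean distances when invoking compactness. Everything else is either the pointwise result of \cite[Lemma 2.9]{AntoineSebastiano} or the routine compactness argument for the lower bound $c_\V$; the paper's phrasing ``by modifying the proof of \cite[Lemma 2.9]{AntoineSebastiano}, combining it with an easy compactness argument and Theorem \ref{uniformPdiff}'' matches exactly this strategy.
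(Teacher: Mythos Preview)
Your proposal is correct and follows precisely the route the paper indicates (modify \cite[Lemma 2.9]{AntoineSebastiano}, add a compactness argument for the uniform lower bound on $Df|_{\V}$, and replace pointwise Pansu differentiability by the uniform estimate of Theorem \ref{uniformPdiff}). The only small slip is that you define $c_\V$ as an infimum over $x\in\Omega$ but then apply $\|Df(y)(v)\|_2\ge c_\V\|v\|_1$ for $y\in B(x,R)$, which need not lie in $\Omega$; this is harmless---just take the infimum over the compact neighbourhood $\overline{\Omega_s}$ (for some small $s>0$) on which $Df(\cdot)|_{\V}$ is still a $h$-isomorphism by continuity, exactly as you anticipate in your final paragraph.
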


Notice that our hypothesis implies that $\V$ is complementary to $\ker(Df(x))$ for every $x \in \Omega$.

Indeed, any continuously Pansu differentiable map is locally metric Lipschitz. Hence, combining Proposition \ref{propdelC} with the proof of \cite[Corollary 2.16]{AntoineSebastiano} we get the following.

\begin{prop}
\label{propuniflip}
Let us consider a map $f \in C^1_{\G}(\G, \M)$ and a compact set $\Omega \subset \G$. Let us assume that there exists a $p$-dimensional homogeneous subgroup $\V$ such that $Df(x)|_{\V}:\V \to \M$ is a h-isomorphism for every $x \in \overline{\Omega_D}$ for some $D>0$. Then there exists a constant $L$ such that for every $m \in \M$ and $x \in f^{-1}(m) \cap \Omega $, the set $f^{-1}(m) \cap B(x,R)$ is an intrinsic Lipschitz graph with constant $L$, where $R$ is the constant of Proposition \ref{propdelC} applied to $\Omega$.
\end{prop}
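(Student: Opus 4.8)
The plan is to produce, over a \emph{fixed} complementary pair, a parametrization of $f^{-1}(m)\cap B(x,R)$ whose intrinsic Lipschitz constant does not depend on $m$ or $x$. Fix once and for all a homogeneous subgroup $\W$ complementary to $\V$; such a $\W$ exists because $\ker(Df(x^{\ast}))$ is complementary to $\V$ for any fixed $x^{\ast}\in\Omega$ (recall the remark after Proposition \ref{propdelC}: $Df(\cdot)|_{\V}$ being a h-isomorphism on $\overline{\Omega_D}$ forces $\V\cap\ker(Df(x))=\{0\}$, and $\dim\V+\dim\ker(Df(x))=p+(q-p)=q$, so $\V$ is complementary to $\ker(Df(x))$ at every such $x$). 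With $\W$ chosen, the constant $c_0=c_0(\W,\V)$ of Proposition \ref{propczero} is fixed, and, since a $C^1_{\G}$ map is locally metric Lipschitz, so is a metric Lipschitz constant $\Lambda$ of $f$ on the compact set $\overline{\Omega_D}$. Apply Proposition \ref{propdelC} to $\overline{\Omega_D}$ (the h-isomorphism hypothesis holds there) to obtain $R_0>0$ such that $d_2(f(y),f(yv))\ge R_0\|v\|_1$ whenever $z\in\overline{\Omega_D}$, $y,yv\in B(z,R_0)$ and $v\in\V$. I would then prove the statement with $L:=\Lambda/R_0$ and with $R>0$ chosen sufficiently small, namely $R\le\min\{R_0,\,c_0R_0/2\}$ and $R(1+2/c_0)<D$; this $R$ is comparable to, and plays the role of, the constant furnished by Proposition \ref{propdelC}.

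First I would check that $\pi_{\W}$ is injective on $E:=f^{-1}(m)\cap B(x,R)$, for every $m\in\M$ and $x\in f^{-1}(m)\cap\Omega$. If $g_1,g_2\in E$ share the same $\W$-component, then $g_1^{-1}g_2=:v\in\V$ and $g_2=g_1v$, with $g_1,g_1v\in B(x,R)\subseteq B(x,R_0)$ and $x\in\Omega\subseteq\overline{\Omega_D}$, so Proposition \ref{propdelC} yields $0=d_2(f(g_1),f(g_1v))\ge R_0\|v\|_1$, hence $v=0$ and $g_1=g_2$. Therefore $E$ is the intrinsic graph of $\phi:=\pi_{\V}\circ(\pi_{\W}|_E)^{-1}$ on $U:=\pi_{\W}(E)\subseteq\W$, with graph map $\Phi$ satisfying $\Phi(\pi_{\W}(g))=g$ for $g\in E$ (the possible non-openness of $U$ is immaterial, since only the pointwise inequality \eqref{defintlip} is used downstream, cf.\ the remark following Proposition \ref{intLipAhlf}).

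The core step is the intrinsic Lipschitz bound. Take $g_1,g_2\in E$ and write the splitting $g_1^{-1}g_2=wv$ with $w=\pi_{\W}(g_1^{-1}g_2)$, $v=\pi_{\V}(g_1^{-1}g_2)$. By left invariance $\|g_1^{-1}g_2\|_1=d_1(g_1,g_2)\le 2R$, so Proposition \ref{propczero} gives $\|w\|_1,\|v\|_1\le 2R/c_0$. Set $h:=g_1w$, so $hv=g_2$. Then $d_1(h,\Omega)\le d_1(h,g_1)+d_1(g_1,\Omega)=\|w\|_1+d_1(g_1,\Omega)\le 2R/c_0+R<D$, hence $h\in\overline{\Omega_D}$; moreover $h\in B(h,R_0)$ trivially and $hv\in B(h,R_0)$ since $d_1(h,hv)=\|v\|_1\le 2R/c_0\le R_0$. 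Proposition \ref{propdelC}, applied with center $h$, then gives $R_0\|v\|_1\le d_2(f(h),f(hv))$; since $f(hv)=f(g_2)=m=f(g_1)$ and $h=g_1w$, the right-hand side equals $d_2(f(g_1w),f(g_1))\le\Lambda\,d_1(g_1w,g_1)=\Lambda\|w\|_1$. Thus $\|v\|_1\le(\Lambda/R_0)\|w\|_1=L\|w\|_1$, i.e.\ $\|\pi_{\V}(\Phi(w_1)^{-1}\Phi(w_2))\|_1\le L\|\pi_{\W}(\Phi(w_1)^{-1}\Phi(w_2))\|_1$ for all $w_1,w_2\in U$, which is exactly \eqref{defintlip}; since $L=\Lambda/R_0$ depends only on $f$, $\Omega$, $D$ and $\V$, it is uniform in $m$ and $x$.

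I expect the main obstacle to be precisely the bookkeeping in the last paragraph: keeping the auxiliary point $h=g_1w$ and the increment $hv=g_2$ inside a ball of radius $R_0$ centered at a point of $\overline{\Omega_D}$ where Proposition \ref{propdelC} is available. This is what makes it necessary to assume the h-isomorphism condition on the enlarged neighbourhood $\overline{\Omega_D}$ rather than merely on $\Omega$, and to take the ball radius in the conclusion strictly smaller than the constant $R_0$ of Proposition \ref{propdelC}, with the slack quantified through $c_0(\W,\V)$ (via Proposition \ref{propczero}). Granting this, the injectivity of $\pi_{\W}$, the identification of \eqref{defintlip}, and the uniformity of $L$ are all routine, which proves Proposition \ref{propuniflip}.
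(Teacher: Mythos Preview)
Your argument is correct and more self-contained than the paper's. The paper invokes the implicit function theorem (Theorem \ref{IFT}) to produce local parametrizations $\phi_{m,x}$ of $f^{-1}(m)\cap B(x,R)$, then quotes \cite[Corollary 2.16]{AntoineSebastiano} as a black box to extract an intrinsic Lipschitz constant depending only on $R$ and on $\mathrm{Lip}(f|_{B(x,R)})$, which is in turn bounded by $\mathrm{Lip}(f|_{\overline{\Omega_D}})$. You bypass both external ingredients: injectivity of $\pi_{\W}$ on $E$ follows directly from Proposition \ref{propdelC}, so $E$ is globally a graph with no need to patch local charts, and the intrinsic Lipschitz inequality comes from a single application of Proposition \ref{propdelC} at the auxiliary point $h=g_1w\in\overline{\Omega_D}$ combined with the metric Lipschitz bound for $f$ on $\overline{\Omega_D}$. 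The cost is the explicit bookkeeping with $c_0(\W,\V)$ and the enlarged neighbourhood (which you handle correctly); the payoff is an explicit constant $L=\Lambda/R_0$ and no dependence on the implicit function theorem. One cosmetic point: the statement pins $R$ to the constant of Proposition \ref{propdelC} applied to $\Omega$, whereas you work with a possibly smaller radius governed by $R_0$, $c_0$ and $D$; the paper's own proof also shrinks $R$ (``assume that $R$ is smaller than $D$''), so this discrepancy is harmless.
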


\begin{proof}
By hypothesis, at any point $x \in 	\overline{\Omega_D}$, $\ker Df(x)$ is a normal homogeneous subgroup complementary to $\V$, hence $Df(x)$ is a h-epimorphism.
Assume that $R$ is smaller than $D$.
Let us fix a homogeneous subgroup $\W$ complementary to $\V$. For every $m \in \M$ and $x \in f^{-1}(m) \cap \Omega$, the set $f^{-1}(m) \cap B(x,R)$ is contained in the intrinsic graph of a function $\phi_{m,x} : U_{m,x} \subset \W \to \V$, for some open set $U_{m,x} \subset \W$. The map $\phi_{m,x}$ is given by Theorem \ref{IFT}, repeatedly applied to different points of $f^{-1}(m) \cap B(x,R)$, if necessary. 

Now we need to observe that the notion of intrinsic Lipschitz function introduced in \cite{AntoineSebastiano} is equivalent to our notion (it is immediate to compare Definition in \cite{AntoineSebastiano} with \cite[Definition 9, Definition 10, Proposition 3.1]{IntLipgraphs}).
Then by \cite[Corollary 2.16]{AntoineSebastiano}, $f^{-1}(m) \cap B(x,R)$ is the intrinsic Lipschitz graph of an intrinsic $L$-Lipschitz function $\phi_{m,x}$ for some constant $L$ depending on $R$, and on the Lipschitz constant of $f|_{B(x,R)}$, that can be uniformly bounded by the $\sup_{x \in \Omega} \mathrm{Lip}(f|_{B(x,R)})\leq \mathrm{Lip}(f|_{\overline{\Omega_D}})< \infty$. As a consequence, the sets $f^{-1}(m) \cap B(x,R)$ are intrinsic $L$-Lipschitz for some positive $L$, that can be chosen independent of $x\in \Omega$ and $m \in \M$.
\end{proof}

\begin{coro}
\label{conspezz}
Let $f \in C^1_{\G}(\G, \M)$ be a function with $Df(x)$ surjective at every $x \in \G$ and let $\Omega\subset \G$ be a compact set. Assume that there exists a $p$-dimensional subgroup $\V$ of $\G$ such that $Df(x)|_{\V}$ is an h-isomorphism for every $x \in \overline{\Omega_D}$ for some $D>0$. Set $\lambda=\sup_{x \in \Omega} \mathrm{Lip}(f|_{B(x,R)})$, where $R$ is the constant given by Proposition \ref{propdelC} applied to $\Omega$. Then there exists a constant $1 \leq T(\G, \lambda, R, p) < \infty$, such that
$$ \int_{\Omega} C_P(Df(x)) d \mathcal{S}^{Q}(x) \leq T \int_{\M} \mathcal{S}^{Q-P}(f^{-1}(m) \cap \Omega)  d \mathcal{S}^P(m).$$
\end{coro}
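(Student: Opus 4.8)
The plan is to deduce Corollary \ref{conspezz} directly from Theorem \ref{theoremmeasurable} (equivalently Theorem \ref{maintheorem2}) by verifying that its hypotheses are met under the assumption that $Df(x)|_{\V}$ is an h-isomorphism on a neighbourhood $\overline{\Omega_D}$ of $\Omega$. The only nontrivial hypothesis to check is the uniform local lower Ahlfors $(Q-P)$-regularity of the level sets $f^{-1}(m)$ with respect to $\mathcal{S}^{Q-P}$, and this is exactly what Proposition \ref{propuniflip} provides, after combining it with the lower Ahlfors bound for intrinsic Lipschitz graphs in Proposition \ref{intLipAhlf} (in the localized form stated in the Remark following it).

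More precisely, first I would note that since $Df(x)|_{\V}:\V\to\M$ is an h-isomorphism, $\V$ has dimension $p$ and $\ker(Df(x))$ is complementary to $\V$ for every $x\in\overline{\Omega_D}$; in particular $Df(x)$ is surjective and $\ker(Df(x))$ is a homogeneous (indeed normal) subgroup, so $Df(x)$ is an h-epimorphism. Next I would apply Proposition \ref{propdelC} to $\Omega$ to obtain the radius $R>0$, shrinking it if necessary so that $R<D$, and fix a homogeneous subgroup $\W$ complementary to $\V$. By Proposition \ref{propuniflip}, there is a constant $L$, independent of $m\in\M$ and $x\in f^{-1}(m)\cap\Omega$, such that $f^{-1}(m)\cap B(x,R)$ is the intrinsic graph of an intrinsic $L$-Lipschitz map $\phi_{m,x}:U_{m,x}\subset\W\to\V$. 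Then, invoking the localized version of Proposition \ref{intLipAhlf} (the Remark after it), there is a radius $\tilde r>0$ — depending only on $R$ and the geometry — such that for every such $x$ and every $0<r<\tilde r$,
$$
\mathcal{S}^{Q-P}(f^{-1}(m)\cap B(x,r))\;\geq\;\left(\frac{c_0(\W,\V)}{1+L}\right)^{Q-P} r^{Q-P}.
$$
Setting $C:=\left(c_0(\W,\V)/(1+L)\right)^{Q-P}$, this says precisely that for $\mathcal{S}^P$-a.e. (in fact every) $m\in\M$ the level set $f^{-1}(m)$ is $\tilde r$-locally $C$-lower Ahlfors $(Q-P)$-regular with respect to $\mathcal{S}^{Q-P}$, with $\tilde r$ and $C$ uniform in $m$. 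One small technical point to handle is that Proposition \ref{propuniflip} controls $f^{-1}(m)\cap B(x,R)$ for $x\in\Omega$, whereas the Ahlfors-regularity hypothesis of Theorem \ref{maintheorem2} asks for lower bounds at every point of $f^{-1}(m)$; this is resolved by working on the slightly enlarged compact set $\overline{\Omega_D}$ (or by the fact that Theorem \ref{maintheorem2} only needs the regularity near $\Omega$, which is where the packing argument in the proof of Theorem \ref{maintheorem} actually takes place via the sets $\overline{\Omega_\delta}$), and by choosing $R$, hence $\tilde r$, small enough that balls of radius $<\tilde r$ centered at points of $f^{-1}(m)\cap\Omega$ stay inside the region covered by Proposition \ref{propuniflip}.

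Having verified the hypotheses, I would apply Theorem \ref{theoremmeasurable} with $A=\Omega$ to conclude that there exists a constant $L'=L'(C,\G,p)$ with
$$
\int_{\Omega} C_P(Df(x))\, d\mathcal{S}^{Q}(x)\;\leq\; L'\int_{\M}\mathcal{S}^{Q-P}(f^{-1}(m)\cap\Omega)\, d\mathcal{S}^P(m).
$$
Since $C=\left(c_0(\W,\V)/(1+L)\right)^{Q-P}$ where $L$ depends only on $\G$, $\lambda$ and $R$ (by Proposition \ref{propuniflip} and the proof of \cite[Corollary 2.16]{AntoineSebastiano}), and $c_0(\W,\V)$ depends only on $\G$, the resulting constant can be written as $T=T(\G,\lambda,R,p)$; one may take $T\geq 1$ without loss of generality. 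The main obstacle — though it is a bookkeeping obstacle rather than a conceptual one — is matching up the \emph{localized, point-dependent} parametrizations $\phi_{m,x}$ coming from the implicit function theorem with the \emph{uniform} constants $C$ and $\tilde r$ required by Theorem \ref{maintheorem2}: one must check that $L$, $c_0(\W,\V)$ and the admissible radius $\tilde r$ can all be chosen independently of $m$ and of the base point, which is precisely the content of Proposition \ref{propuniflip} together with the compactness of $\overline{\Omega_D}$ and the uniform estimate in Theorem \ref{uniformPdiff}.
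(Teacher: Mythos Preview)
Your proposal is correct and follows essentially the same route as the paper: both arguments use Propositions \ref{propuniflip} and \ref{intLipAhlf} to produce a uniform lower Ahlfors $(Q-P)$-regularity estimate for $f^{-1}(m)$ at points of $\Omega$, and then observe that this suffices to replace the Ahlfors-regularity hypothesis in Claim~2 of Theorem~\ref{maintheorem} (since the packing argument there only uses balls centered on $f^{-1}(m)\cap\Omega$). Your explicit tracking of the constant $C=(c_0(\W,\V)/(1+L))^{Q-P}$ and your discussion of the localization issue are, if anything, slightly more thorough than the paper's own treatment.
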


\begin{proof}
We can assume that $R<D$. Set $\W$ any homogeneous subgroup complementary to $\V$.
By Propositions \ref{propuniflip} and \ref{intLipAhlf}, there exists a constant $K>0$ such that for every $m \in \M$ and $x \in f^{-1}(m) \cap \Omega$, for every $0<r<R$, $\mathcal{S}^{Q-P}(f^{-1}(m) \cap B(x,r)) \geq Kr^{Q-P} $, where $K$ is a constant depending on $c_0(\W,\V)>0$ and on the intrinsic Lipschitz constants of the parametrizing maps $\phi_{m,x}:U_{m,x} \subset \W \to \V$ of $\{f^{-1}(m) \cap B(x,r)\}_{\{m \in \M, x \in f^{-1}(m) \cap \Omega\}}$. Moreover observe that by Proposition \ref{propuniflip}, $\phi_{m,x}$ are intrinsic $L$-Lipschitz, for some constant $L$ independent of $m$ and $x$.
Now notice that this observation can take the place of the hypothesis that level sets $f^{-1}(m)$ are uniformly locally lower Ahlfors $(Q-P)$-regular with respect to $\mathcal{S}^{Q-P}$ in Theorem \ref{maintheorem2} (more precisely in Claim 2 on Theorem \ref{maintheorem}), hence we can apply our result to this situation, and we directly get the thesis.
\end{proof}

\begin{rema}
We have seen, in the proof of Corollary \ref{conspezz}, that the existence of a $p$-dimensional homogeneous subgroup $\V$ complementary to $\ker(Df(x))$ for every point $x \in \G$, implies that the level sets of $f$ are $R$-locally $C$-lower Ahlfors $(Q-P)$-regular with respect to $\mathcal{S}^{Q-P}$, for some positive constants $C$ and $R$, locally independent of the choice of the level set. We want to highlight that the opposite may be false. In fact, there exist continuously Pansu differentiable maps between Carnot groups, with everywhere surjective differential, such that their level sets are lower Ahlfors regular, but at the same time $\ker(Df(x))$ does not admit any complementary subgroup. 

We present a simple example related to the first Heisenberg group $\H^1$, that is the simplest non-commutative Carnot group. It can be represented as a direct sum of two linear subspaces $\H^1=H_1 \oplus H_2$, where $H_1=\mathrm{span}(e_1, e_2)$, $H_2=\mathrm{span}(e_3)$ with unique non trivial relation $[e_1, e_2]=e_3$. For every $p,q \in \H^1$, $pq=p+q+\frac{1}{2}[p,q]$.

Let us consider the map
$$f:\H^1 \to \R^2, \ f(x,y,z)=(ax+by,cx+dy), \mathrm{ \ with \ } \det \begin{bmatrix}
a & b\\
c & d \\
\end{bmatrix} \neq 0.$$
Observe that  $f \in \mathcal{L}(\H^1, \R^2)$, then the Pansu differential of $f$ is constant on $\H^1$: for every $\bar{x} \in \H^1$,
$$Df(\bar{x})(x,y,z)=f(x,y,z),$$
hence, $\ker(Df(\bar{x}))= \mathrm{span}(e_3) $ for every $\bar{x} \in \H^1$.
Notice that $\mathrm{span}(e_3)$ is a normal homogeneous subgroup of metric dimension 2 that does not admit any complementary subgroup (see for instance \cite[Proposition 4.1]{Diffofintr}).
Let us now focus on the level sets of $f$. If we fix $v \in \R^2$, $f^{-1}(v)=w   \mathrm{span}(e_3)$ for some $w=w(v) \in H_1$, hence any level set is a coset of $\mathrm{span}(e_3)$. Then, by left invariance and homogeneity of the distance, the level sets $f^{-1}(v)$ are $C$-lower Ahlfors 2-regular with respect to $\mathcal{S}^2$, for some positive constant $C$, independent of the choice of $v$.
\end{rema}

\textbf{Acknowledgements}: We would like to thank Pierre Pansu for directing us to the study of the problem during our stay in Orsay. We would also like to express our gratitude to Bruno Franchi and Francesco Serra Cassano, for many fruitful conversations and suggestions.


\begin{thebibliography}{plain}
\bibitem{BonfiLanco}
Andrea Bonfiglioli, Ermanno Lanconelli, and Francesco Uguzzoni.
\newblock {\em Stratified {L}ie groups and potential theory for their
  sub-{L}aplacians}.
\newblock Springer Monographs in Mathematics. Springer, Berlin, 2007.

\bibitem{EvansGariepy}
L.~C. Evans and R.~F. Gariepy.
\newblock {\em Measure theory and fine properties of functions}.
\newblock Studies in Advanced Mathematics. CRC Press, Boca Raton, FL, 1992.

\bibitem{Federer}
Herbert Federer.
\newblock {\em Geometric measure theory}.
\newblock Die Grundlehren der mathematischen Wissenschaften, Band 153.
  Springer-Verlag New York Inc., New York, 1969.

\bibitem{Franchi96}
Bruno Franchi, Raul Serapioni, and Francesco Serra~Cassano.
\newblock Meyers-{S}errin type theorems and relaxation of variational integrals
  depending on vector fields.
\newblock {\em Houston J. Math.}, 22(4):859--890, 1996.

\bibitem{Areaformula}
Bruno Franchi, Raul Serapioni, and Francesco Serra~Cassano.
\newblock Regular submanifolds, graphs and area formula in {H}eisenberg groups.
\newblock {\em Adv. Math.}, 211(1):152--203, 2007.

\bibitem{Diffofintr}
Bruno Franchi, Raul Serapioni, and Francesco Serra~Cassano.
\newblock Differentiability of {Intrinsic} {Lipschitz} {Functions} within
  {Heisenberg} {Groups}.
\newblock {\em Journal of Geometric Analysis}, 21(4):1044--1084, 2011.

\bibitem{IntLipgraphs}
Bruno Franchi and Raul~Paolo Serapioni.
\newblock Intrinsic {L}ipschitz graphs within {C}arnot groups.
\newblock {\em J. Geom. Anal.}, 26(3):1946--1994, 2016.

\bibitem{AntoineSebastiano}
Antoine Julia, Sebastiano~Nicolussi Golo, and Davide Vittone.
\newblock Area of intrinsic graphs and coarea formula in {C}arnot groups, 2020.
\newblock Available on arXiv: https://arxiv.org/abs/2004.02520.

\bibitem{Karma}
M.~Karmanova and S.~Vodopyanov.
\newblock A coarea formula for smooth contact mappings of
  {C}arnot-{C}arath\'{e}odory spaces.
\newblock {\em Acta Appl. Math.}, 128:67--111, 2013.

\bibitem{Artem}
Artem Kozhevnikov.
\newblock {\em Propriétés métriques des ensembles de niveau des applications
  différentiables sur les groupes de {C}arnot}.
\newblock PhD thesis, Paris 11, 2015.
\newblock Available on the webpage
  https://tel.archives-ouvertes.fr/tel-01178864/document.

\bibitem{Magnani2001}
Valentino Magnani.
\newblock Differentiability and area formula on stratified {L}ie groups.
\newblock {\em Houston J. Math.}, 27(2):297--323, 2001.

\bibitem{Magnanicoareaineq}
Valentino Magnani.
\newblock On a general coarea inequality and applications.
\newblock {\em Ann. Acad. Sci. Fenn. Math.}, 27(1):121--140, 2002.

\bibitem{Magnanicoarea2005}
Valentino Magnani.
\newblock The coarea formula for real-valued {L}ipschitz maps on stratified
  groups.
\newblock {\em Math. Nachr.}, 278(14):1689--1705, 2005.

\bibitem{Magnaniblowup}
Valentino Magnani.
\newblock Blow-up of regular submanifolds in {H}eisenberg groups and
  applications.
\newblock {\em Cent. Eur. J. Math.}, 4(1):82--109, 2006.

\bibitem{Magnani2008}
Valentino Magnani.
\newblock Non-horizontal submanifolds and coarea formula.
\newblock {\em J. Anal. Math.}, 106:95--127, 2008.

\bibitem{areaimpliescoarea}
Valentino Magnani.
\newblock Area implies coarea.
\newblock {\em Indiana Univ. Math. J.}, 60(1):77--100, 2011.

\bibitem{Magnani_2013}
Valentino Magnani.
\newblock Towards differential calculus in stratified groups.
\newblock {\em Journal of the Australian Mathematical Society}, 95(1):76–128,
  2013.

\bibitem{Magnani2018}
Valentino Magnani.
\newblock Towards a theory of area in homogeneous groups.
\newblock 2018.
\newblock Available on arXiv: https://arxiv.org/abs/1810.08094.

\bibitem{Magnani2019}
Valentino Magnani.
\newblock Towards a theory of area in homogeneous groups.
\newblock {\em Calc. Var. Partial Differential Equations}, 58(3):Art. 91, 39,
  2019.

\bibitem{MagSteTre}
Valentino Magnani, Eugene Stepanov, and Dario Trevisan.
\newblock A rough calculus approach to level sets in the {H}eisenberg group.
\newblock {\em J. Lond. Math. Soc. (2)}, 97(3):495--522, 2018.

\bibitem{monser}
Roberto Monti and Francesco Serra~Cassano.
\newblock Surface measures in {C}arnot-{C}arath\'{e}odory spaces.
\newblock {\em Calc. Var. Partial Differential Equations}, 13(3):339--376,
  2001.

\bibitem{montivittone}
Roberto Monti and Davide Vittone.
\newblock Height estimate and slicing formulas in the {H}eisenberg group.
\newblock {\em Anal. PDE}, 8(6):1421--1454, 2015.

\bibitem{Pansu}
Pierre Pansu.
\newblock M\'etriques de {C}arnot-{C}arath\'eodory et quasiisom\'etries des
  espaces sym\'etriques de rang un.
\newblock {\em Ann. of Math. (2)}, 129(1):1--60, 1989.

\bibitem{quasiPansu}
Pierre Pansu.
\newblock On the quasisymmetric {H}\"{o}lder-equivalence problem for {C}arnot
  groups.
\newblock {\em Ann. Fac. Sci. Toulouse Math. (6)}, 29(4):951--969, 2020.

\bibitem{ricci}
Fulvio Ricci.
\newblock Sub-{Laplacians} on nilpotent {Lie} groups.
\newblock Unpublished lecture accessible on webpage
  http://homepage.sns.it/fricci/papers/sublaplaciani.pdf, 2002-2003.

\bibitem{Notesserra}
Francesco Serra~Cassano.
\newblock Some topics of geometric measure theory in {C}arnot groups.
\newblock In {\em Geometry, Analysis and Dynamics on sub-{R}iemannian
  manifolds. {V}ol. 1}, EMS Ser. Lect. Math., pages 1--121. Eur. Math. Soc.,
  Z\"urich, 2016.
\end{thebibliography}
\end{document}